\definecolor{webgreen}{rgb}{0,.5,0}
\definecolor{webbrown}{rgb}{.6,0,0}
\DeclareMathOperator{\out}{out}
\newcommand{\thickhline}{
    \noalign {\ifnum 0=`}\fi \hrule height 1pt
    \futurelet \reserved@a \@xhline
}
\newcolumntype{"}{@{\hskip\tabcolsep\vrule width 1pt\hskip\tabcolsep}}
\theoremstyle{plain}
\newtheorem{theorem}{Theorem}
\newtheorem{corollary}[theorem]{Corollary}
\newtheorem{lemma}[theorem]{Lemma}
\newtheorem{proposition}[theorem]{Proposition}
\newenvironment{fact}[1]
  {\innerfact}
  {\endinnerfact}
\theoremstyle{definition}
\theoremstyle{remark}
\begin{document}

\title{Generalizing the Wythoff Array and other Fibonacci Facts to Tribonacci Numbers}
\author{Eric Chen}
\author{Adam Ge}
\author{Andrew Kalashnikov}
\author{Ella Kim}
\author{Evin Liang}
\author{Mira Lubashev}
\author{Matthew Qian}
\author{Rohith Raghavan}
\author{Benjamin Taycher}
\author{Samuel Wang}
\affil{PRIMES STEP}
\author{Tanya Khovanova}
\affil{MIT}

\maketitle

\begin{abstract}
In this paper, we generalize a lot of facts from John Conway and Alex Ryba's paper, \textit{The extra Fibonacci series and the Empire State Building}, where we replace the Fibonacci sequence with the Tribonacci sequence. We study the Tribonacci array, which we also call \textit{the Trithoff array} to emphasize the connection to the Wythoff array. We describe 13 new sequences.
\end{abstract}

\section{Introduction}

We stumbled upon a recent paper by John Conway and Alex Ryba \cite{ConwayRyba2016}, \textit{The extra Fibonacci series and the Empire State Building}. The title intrigued us, and our initial goal was to find the Empire State Building related to the Tribonacci sequences.

The Conway-Ryba paper \cite{ConwayRyba2016} studies the Wythoff array. The array itself initially appeared in relation to the Wythoff game introduced by Wythoff \cite{Wythoff1907}. It is a two-player impartial combinatorial game with deep connections to the Fibonacci numbers.

The Wythoff array is an infinite table that contains all integers once. The integers increase along each row and column. The integers in the array can be divided into pairs naturally, and each pair is a P-position in the Wythoff game. In this paper, we do not try to generalize the Wythoff game. We use an independent definition of the Wythoff array using Fibonacci numbers.

Every integer has a unique Zeckendorf representation, which can be considered as a Fibonacci-base representation. Namely, every integer can be represented as a sum of distinct Fibonacci numbers so that no two numbers are consecutive. This representation can be encoded as a string of ones and zeros without two consecutive ones. The $n$th column of the Wythoff array are numbers with $n-1$ zeros at the end of their Zeckendorf representation. For example, the first column consists of the numbers with their Zeckendorf representation ending in 1. A number $m$ not in the first column is a Fibonacci successor of the number $n$, which is located to the left of $m$ in the array. Conway and Ryba \cite{ConwayRyba2016} use the notation $m = \out(n)$.

Conway and Ryba \cite{ConwayRyba2016} extended the array to the left and found some natural border lines that symmetrically surround the central column of the array. The lines form a shape resembling the Empire State building.

The Conway-Ryba paper \cite{ConwayRyba2016} contains a lot of statements that are called facts. We generalized half of the statements to the Tribonacci numbers. We also went on different tangents and added more statements unrelated to the paper. The Wythoff array's analog is known and called the Tribonacci array \cite{DucheneRigo2008,Keller1972}. In this paper, we call it the Trithoff array to emphasize the connection to the Wythoff array. 

We did not actually find an analog of the Empire State building in the Trithoff array. This is because when extending the Trithoff array to the left, we lose the symmetry properties of the array. But we have found many other things.

Here we describe what is done in the paper together with a road map.

Section~\ref{sec:Preliminaries} covers the list of facts from Conway-Ryba paper \cite{ConwayRyba2016} that we generalize. We state well-known facts about Tribonacci numbers $T_n$, the Fibonacci word, and the Tribonacci word. We introduce the Tribonacci successor of integer $n$, which we denote as $\out(n)$, by analog with Conway and Ryba \cite{ConwayRyba2016}.

It is known that the Tribonacci sequence grows approximately as a geometric series $\alpha^n$, where $\alpha$ is the Tribonacci constant. In Section~\ref{sec:bounds} we estimate the value of $\out(n)$ as $\alpha n-0.85 < \out(n)<\alpha n+ 0.85$. We also study the difference $T_{n+1} - \alpha T_n$. We show that the sequence of such differences cannot have the same sign for any three consecutive terms and describe positive and negative records for such differences.

In Section~\ref{sec:trithoffarray}, we discuss the Tribonacci array, which we call the Trithoff array, to emphasize the connections to the Wythoff array. In Section~\ref{subsec:rowdiffsequences}, we discuss difference sequences for every row. We discuss how for a given row to find the row that represents the difference sequence. We prove that given a row $r$, we can find another row such that $r$ is its difference sequence if and only if row $r$ is not the row that repeats periodically even, even, odd,  and odd numbers. In Section ~\ref{subsec:columndiffsequences}, we describe the difference sequence of the first column and show that it consists of twos and threes. We also show that a number in a Trithoff array in column $c$ and row $r$ can be approximated as $\frac{r\alpha^c}{\alpha-1}$. 

In Section~\ref{sec:precolumns}, we extend the Trithoff array to the left. We study the columns $-2$, $-1$, and 0, which we call, by analog to Conway-Ryba \cite{ConwayRyba2016}, the pre-seed, seed, and wall. We describe these columns in terms of the first column. For example, we prove that the wall term $w$ is followed by $\out(w) - 1$ in the first column. The pre-seed, seed, and wall form new sequences, which we describe in full detail.

In Section~\ref{sec:multiples}, we prove that any positive Tribonacci-like sequence has its tail appearing in the Trithoff array. We also study how multiples of Tribonacci-like sequences appear in the array. We prove that such multiples appear in order. In addition, we show that the $n$th multiple of a Tribonacci-like sequence has the row number equal to 1 modulo $n$. We explain that when extending a positive Tribonacci-like sequence to the left, before some index $i$, the sequence cannot have three numbers with the same sign. Moreover, if we take the absolute values of the numbers before the index $i$ and reverse the sequence, it cannot be a Tribonacci-like sequence. This is a stark difference from the Fibonacci case.

In Section~\ref{sec:tribinarrynumbers}, we describe Fibbinary, Tribbinary, Fibternary, Tribternary numbers, and their properties.

We mention 23 existing sequences from the OEIS. We study and describe 13 new sequences: four of them are particular columns of the Trithoff array, three sequences are related to Tribonacci numbers, and six sequences are related to the Trithoff array but are not rows or columns.

\section{Preliminaries}\label{sec:Preliminaries}

\subsection{The extra Fibonacci series and the Empire state building}\label{sec:EmpireState}

We summarize results relevant to us from the paper ``The extra Fibonacci series and the Empire State Building'' \cite{ConwayRyba2016} by John Conway and Alex Ryba. The paper has a lot of statements in the form of facts. We present here some facts from the paper that we plan to generalize.

In the Fibonacci sequence 0, 1, 1, 2, 3, 5, 8, $\ldots$ (A000045), each term is the sum of the previous two. We say that this sequence follows the \textit{Fibonacci rule}. Integer sequences that follow the Fibonacci rule and end in positive integers are called \textit{extraFib series} or \textit{extraFibs}, see \cite{ConwayRyba2016}. They use the word series to emphasize that the sequences can be extended in both directions.

The \textit{Zeckendorf representation} of an integer $k$ is its expression as a sum of positive Fibonacci numbers, where each Fibonacci number can only be used once, and there can be no two consecutive Fibonacci numbers in the sum.

\begin{fact}{2}
The Zeckendorf expansion of $n$ is unique.
\end{fact}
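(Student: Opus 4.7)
The plan is to reduce uniqueness to a single size estimate: any Zeckendorf-legal sum whose largest summand is $F_a$ is strictly less than $F_{a+1}$. Once this is in hand, uniqueness follows from a clean "compare the top terms" argument.

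First, I would establish the size bound. The greediest Zeckendorf-legal sum with top term $F_a$ is $F_a + F_{a-2} + F_{a-4} + \cdots$, which telescopes to $F_{a+1} - 1$ via the standard identities $F_2 + F_4 + \cdots + F_{2k} = F_{2k+1} - 1$ and $F_1 + F_3 + \cdots + F_{2k+1} = F_{2k+2} - 1$ (each of which is a one-line induction using $F_i + F_{i+1} = F_{i+2}$). Any other Zeckendorf-legal expansion with top term $F_a$ omits some of these entries, so its sum is at most $F_{a+1} - 1 < F_{a+1}$.

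For uniqueness, I would suppose for contradiction that $n$ admits two distinct Zeckendorf representations $S$ and $S'$. Cancelling every Fibonacci number that appears in both (the equal sums remain equal after deleting equal terms from each side), I may assume $S$ and $S'$ have disjoint supports and at least one of them is nonempty; since both represent the same integer, both are nonempty. Let $F_a$ be the largest term of $S$ and $F_b$ the largest term of $S'$. Disjointness and symmetry let me assume $a > b$, so $a \geq b+1$. Then the size bound gives $\sum S' \leq F_{b+1} - 1 \leq F_a - 1 < F_a \leq \sum S$, which contradicts $\sum S = \sum S'$.

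The main obstacle is purely bookkeeping: the argument is sensitive to the Fibonacci indexing convention, because Zeckendorf representations normally exclude $F_1$ to avoid the ambiguity $F_1 = F_2 = 1$. I would fix the convention $F_2 = 1,\ F_3 = 2,\ F_4 = 3,\ldots$ at the outset and only permit summands $F_i$ with $i \geq 2$. With this convention, the telescoping identities and the disjoint-support reduction go through cleanly, and no other subtlety arises. Everything else is a routine induction on the largest index appearing in either representation.
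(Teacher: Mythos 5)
The paper itself does not prove this statement: Fact~2 is quoted in Section~2.1 as background from Conway and Ryba, with the proof deferred to the literature, so there is no in-paper argument to compare yours against. Your proposal is the standard uniqueness proof (bound every legal sum with top term $F_a$ by $F_{a+1}-1$, cancel the common support, compare top terms), and the overall structure is correct. Two bookkeeping repairs are needed. First, under your own convention ($F_2=1$, $F_3=2$, summands restricted to indices $i\ge 2$) the second telescoping identity should be $F_3+F_5+\cdots+F_{2k+1}=F_{2k+2}-1$; as written, with $F_1=1$ on the left, that sum equals $F_{2k+2}$, not $F_{2k+2}-1$. Second, the claim that any other legal expansion with top term $F_a$ ``omits some of these entries'' is not literally true --- $\{F_6,F_3\}$ is Zeckendorf-legal but is not a subset of $\{F_6,F_4,F_2\}$. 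The correct justification is that the gap condition forces the $j$-th largest index to be at most $a-2(j-1)$, so the legal sum is dominated term by term by the greedy sum $F_a+F_{a-2}+\cdots$ (or, equivalently, one runs a strong induction on $a$). With those two lines fixed, the argument is complete and is the proof one would expect Conway and Ryba to have in mind.
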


Suppose integer $n$ has Zeckendorf representation
\[n = F_{i_1} + \cdots +F_{i_k}.\] 
We can denote \textit{the Fibonacci successor} of $n$ as $\out(n)$, where it is defined as:
\[\out(n) = F_{i_1+1} + \cdots +F_{i_k+1}.\]

\begin{fact}{1}
The function $\out(n)$ is well-defined.
\end{fact}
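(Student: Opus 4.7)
The plan is to derive this immediately from Fact 2. The definition of $\out(n)$ begins by writing $n$ in Zeckendorf form as $n = F_{i_1} + \cdots + F_{i_k}$ and then outputs $F_{i_1+1} + \cdots + F_{i_k+1}$. Thus $\out(n)$ depends on $n$ only through the set of indices $\{i_1, \ldots, i_k\}$ appearing in the Zeckendorf expansion. For $\out$ to be a genuine function of $n$, I need these indices to be determined by $n$ alone, and this is precisely the uniqueness statement of Fact 2. The proof is therefore essentially one line: by Fact 2 the indices $i_1, \ldots, i_k$ are uniquely determined by $n$, so the value $F_{i_1+1} + \cdots + F_{i_k+1}$ is an unambiguous integer.

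For completeness I would also record that the output of $\out$ is itself in valid Zeckendorf form, which is convenient (though not strictly required for well-definedness). If $i_1 < i_2 < \cdots < i_k$ with $i_{j+1} - i_j \geq 2$ and $i_1 \geq 2$, then the shifted indices $i_j + 1$ are still strictly increasing, still pairwise non-consecutive, and still at least $3$, so they form a legitimate Zeckendorf index set. This guarantees that $\out$ can be iterated, and that the natural expression for $\out(n)$ coincides with its Zeckendorf representation, a point which will be useful in later sections when we read off the Trithoff array in terms of $\out$.

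The main difficulty is entirely absorbed into Fact 2; once that is available the present claim is just a definitional unpacking, and I do not anticipate any technical subtlety. The only real writing decision is whether to state the shift-preserves-Zeckendorf observation as part of the proof of Fact 1 or to relegate it to a short corollary, and I would prefer to include it in the proof itself, since it costs only a sentence and makes the iteration of $\out$ transparent.
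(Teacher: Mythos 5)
Your proposal is correct and matches the paper's approach exactly: the paper gives no separate argument for Fact 1 beyond noting that well-definedness follows immediately from the uniqueness of the Zeckendorf (respectively Tribonacci) representation, i.e.\ from Fact 2. Your additional observation that the shifted index set remains a valid Zeckendorf index set is a harmless and useful extra, but not something the paper spells out.
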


For the next fact, we denote the golden ratio as $\phi$.

\begin{fact}{3}
The unique integer in the open unit interval $(\phi n - \phi^{-2}, \phi n + 1 - \phi^{-2})$ is $\out(n)$.
\end{fact}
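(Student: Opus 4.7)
The plan is to use Binet's formula to reduce the claim to a bound on a geometric-type sum indexed by the Zeckendorf indices.

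First I would derive the identity $F_{i+1}-\phi F_i = \psi^{i}$, where $\psi = -\phi^{-1}$ is the conjugate root of $x^2=x+1$. This is immediate from $F_i = (\phi^i-\psi^i)/\sqrt{5}$ together with $\phi-\psi=\sqrt{5}$. Summing over the Zeckendorf representation $n=F_{i_1}+\cdots+F_{i_k}$ (with each $i_j\ge 2$ and no two consecutive) yields
\[
\out(n)-\phi n \;=\; \sum_{j=1}^{k}\psi^{i_j} \;=\; \sum_{j=1}^{k}(-1)^{i_j}\phi^{-i_j}.
\]
Since the interval $(\phi n-\phi^{-2},\,\phi n+1-\phi^{-2})$ has length $1$, it contains at most one integer, so it suffices to show
\[
-\phi^{-2} \;<\; \sum_{j=1}^{k}(-1)^{i_j}\phi^{-i_j} \;<\; 1-\phi^{-2}.
\]

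Next I would bound the sum using the Zeckendorf constraints. The upper bound is maximized by making every index contribute a positive term and minimizing each such index; the most extreme legal choice is the even indices $2,4,6,\ldots$, which are admissible since consecutive even numbers differ by $2$. Any finite partial sum of this type is strictly less than
\[
\sum_{m=1}^{\infty}\phi^{-2m}\;=\;\frac{\phi^{-2}}{1-\phi^{-2}}\;=\;\phi^{-1}\;=\;1-\phi^{-2},
\]
using the relation $\phi^{-1}+\phi^{-2}=1$. Symmetrically, the sum is minimized by the odd indices $3,5,7,\ldots$, and any finite partial sum is strictly greater than $-\phi^{-3}/(1-\phi^{-2})=-\phi^{-2}$. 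Strictness comes from the fact that Zeckendorf representations have only finitely many terms, so neither geometric series is ever fully realized.

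Combining these bounds gives $\out(n)\in(\phi n-\phi^{-2},\phi n+1-\phi^{-2})$, and the length-one interval has no room for another integer, proving the claim. The only subtlety worth double-checking is the strictness of both extremal inequalities, which hinges on Zeckendorf sums being finite; I expect this to be the one step that requires explicit verification, while the algebra with $\phi$ is routine given the identity $\phi^{-1}+\phi^{-2}=1$.
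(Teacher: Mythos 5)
Your proof is correct. Note that the paper itself does not prove Fact 3 --- it is quoted from Conway--Ryba as background --- but your argument is essentially the method the paper uses for its Tribonacci analog (Lemma~\ref{lemma:alphadiff} together with Fact 3T): write the error $\out(n)-\phi n$ as a sum of conjugate-root powers over the Zeckendorf indices and bound it by a geometric series whose ratio reflects the no-consecutive-indices constraint. The one genuine difference is that in the Fibonacci case the conjugate root $\psi=-\phi^{-1}$ is real and its sign is determined by the parity of the index, so you can split the sum into its positive (even-index) and negative (odd-index) parts and obtain the exact sharp constants $-\phi^{-2}$ and $1-\phi^{-2}$; in the Tribonacci case the conjugate roots are complex, so the paper can only bound absolute values and must supplement the geometric-series estimate with computation to reach $0.85$. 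Your key steps all check: $F_{i+1}-\phi F_i=\psi^i$, the evaluations $\sum_{m\ge1}\phi^{-2m}=\phi^{-1}=1-\phi^{-2}$ and $\sum_{m\ge1}\phi^{-(2m+1)}=\phi^{-2}$, and the strictness coming from finiteness of the representation (including the degenerate case where all surviving terms have one sign, where the bound holds trivially). One stylistic caution: rather than asserting that the all-even (resp.\ all-odd) index set is ``the most extreme legal choice,'' it is cleaner to simply discard the negative (resp.\ positive) terms and bound what remains by the full geometric series; that removes any need to argue about which index sets are admissible.
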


The Wythoff array is an infinite table $W_{m,n}$, with $m,n > 0$, where $W_{m,n}$ denotes the entry in row $m$ and column $n$ of the array, and where 
\begin{itemize}
\item $W_{{m,1}}=\left\lfloor \lfloor m\varphi \rfloor \varphi \right\rfloor$,
\item $W_{m,2}=\left\lfloor \lfloor m\varphi \rfloor \varphi ^{2}\right\rfloor$,
\item $W_{m,n} = W_{m,n-2} + W_{m,n-1}$ for $n > 2$.
\end{itemize}

We see that each row of the array is an extraFib, and thus, we can continue each sequence to the left. Table~\ref{table:theGardenCR} shows the corner of the Wythoff array with two more columns on the left. 
\begin{table}[ht!]
\begin{center}
    \begin{tabular}{c|c!{\vrule width 1pt}ccccccccc}
         0&1&1&2&3&5&8&13&21&34& \dots\\
         1&3&4&7&11&18&29&47&76&123& \dots\\
         2&4&6&10&16&26&42&68&110&178& \dots\\
         3&6&9&15&24&39&63&102&165&267& \dots\\
         4&8&12&20&32&52&84&136&220&356& \dots\\
         5&9&14&23&37&60&97&157&254&411& \dots\\
         6&11&17&28&45&73&118&191&309&500& \dots\\
         7&12&19&31&50&81&131&212&343&555& \dots\\
         8&14&22&36&58&94&152&246&398&644& \dots\\
         9&16&25&41&66&107&173&280&453&733& \dots\\
         10&17&27&44&71&115&186&301&487&788& \dots\\
    \end{tabular}
\end{center}
\caption{The Garden State}
\label{table:theGardenCR}
\end{table}

Conway and Ryba \cite{ConwayRyba2016} call the table the Garden State. The numbers in the first column are called the \textit{seed} terms; in the second column, the \textit{wall} terms; and other numbers are called \textit{the Garden}. The Garden grows out in the sense that for any term $n$ in the Garden, the next term to the right is $\out(n)$.

The first row is Fibonacci numbers, while the second row is Lucas numbers. The next three rows are 2, 3, and 4 times the Fibonacci numbers. The sixth row is called the Pibonacci numbers (A104449). When extended to the left, the first few terms of the sequence look like the digits of the number Pi: 3, 1, 4, 5, 9.

We can see that the first column of the Wythoff array consists of integers whose Zeckendorf representation ends in 1. It is easy to prove the following structure of Table~\ref{table:theGardenCR}.

\begin{fact}{5}
(a) A garden term $n$ is followed by $\out(n)$.
(b) A seed term $s$ is followed by $\out(s) + 1$.
(c) A wall term $w$ is followed by $\out(w) - 1$.
\end{fact}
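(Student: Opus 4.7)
The plan is to use Fact 3 as the only non-trivial input: to show a candidate integer $X$ equals $\out(n)$, I verify that $X \in (\varphi n - \varphi^{-2}, \varphi n + 1 - \varphi^{-2})$. First I would extend the recurrence $W_{m,n} = W_{m,n-2} + W_{m,n-1}$ backward one step at a time to get closed forms for the wall and seed columns: because $k\varphi^2 = k\varphi + k$ for integer $k$, taking floors gives $W_{m,2} = W_{m,1} + \lfloor m\varphi \rfloor$, so $W_{m,0} = \lfloor m\varphi \rfloor$; a similar fractional-part argument yields $W_{m,-1} = m - 1$. The identities $\varphi^2 = \varphi + 1$ and $\varphi^{-1} + \varphi^{-2} = 1$ will keep the remaining bookkeeping clean.

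With these closed forms in hand, (b) and (c) are short. For (b), the desired identity $\out(m-1) = \lfloor m\varphi \rfloor - 1$ reduces via Fact 3 to checking $\varphi^{-1} - \{m\varphi\} \in (-\varphi^{-2}, \varphi^{-1})$, which is immediate from $0 < \{m\varphi\} < 1$. For (c), writing $k := \lfloor m\varphi \rfloor$ and expanding $k\varphi = m\varphi^2 - \varphi\{m\varphi\}$ produces the key identity $\{k\varphi\} = 1 - \varphi^{-1}\{m\varphi\}$, which lies strictly in $(\varphi^{-2}, 1)$; Fact 3 then gives $\out(k) = \lfloor k\varphi \rfloor + 1 = W_{m,1} + 1$.

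The heart of the argument is (a). I would define $\delta_c := W_{m,c} - \varphi W_{m,c-1}$. The Fibonacci recurrence immediately yields the self-similar identity $\delta_{c+1} = -\varphi^{-1}\delta_c$, so $|\delta_c|$ decays geometrically while the sign alternates. Reusing the computation from (c), the base case is $\delta_2 = \varphi^{-1}\{k\varphi\} \in (0, \varphi^{-1})$, and iterating places $\delta_c$ strictly inside $(-\varphi^{-2}, 1 - \varphi^{-2})$ for every $c \geq 2$; Fact 3 then concludes $W_{m,c+1} = \out(W_{m,c})$ for all $c \geq 1$.

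The main obstacle will be the signed bookkeeping against the asymmetric Fact 3 interval: the left endpoint is $-\varphi^{-2}$ and the right is $\varphi^{-1}$, so the two bounds on $\delta_c$ are not symmetric, and odd and even $c$ must be controlled separately. Once the base case is pinned strictly inside $(0, \varphi^{-1})$ and the contraction $|\delta_{c+1}| = \varphi^{-1}|\delta_c|$ is established, the remaining strict inequalities fall into place automatically.
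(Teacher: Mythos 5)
The paper never actually proves Fact 5 --- it is quoted from Conway--Ryba with only the remark that it ``is easy to prove'' --- so the fair comparison is with the paper's detailed proofs of the Tribonacci analogs 5Ta--5Tc in Section~\ref{sec:precolumns}. Your argument is correct, and it takes a genuinely different route. You work analytically from the Beatty-type definition $W_{m,1}=\lfloor\lfloor m\varphi\rfloor\varphi\rfloor$, $W_{m,2}=\lfloor\lfloor m\varphi\rfloor\varphi^{2}\rfloor$: the closed forms $W_{m,0}=\lfloor m\varphi\rfloor$ and $W_{m,-1}=m-1$ are right, the fractional-part computations for (b) and (c) check out (in particular $\{k\varphi\}=1-\varphi^{-1}\{m\varphi\}>\varphi^{-2}$ for $k=\lfloor m\varphi\rfloor$), and the contraction $\delta_{c+1}=-\varphi^{-1}\delta_{c}$ with base case $\delta_{2}=\varphi^{-1}\{k\varphi\}\in(0,\varphi^{-1})$ keeps every $\delta_{c}$, $c\ge 2$, strictly inside $(-\varphi^{-2},\varphi^{-1})$, so Fact 3 settles all garden columns at once. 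The paper's method for the analogous facts is instead combinatorial: it reads the wall and seed off the trailing digits of the Zeckendorf/Tribonacci representation of the first-column entry (Lemma~\ref{lemma:precolumns}) and finishes by a case analysis on those digits (as in the proofs of Facts 5Tb and 5Tc). What your approach buys is a uniform, essentially computation-free treatment of the entire garden; what it costs is that it leans on the Fact 3 interval having length exactly $1$, which is what makes $\out(n)$ the \emph{unique} integer there. That is precisely the feature that fails in the Tribonacci setting, where Fact 3T only gives an interval of length $1.7$ --- which explains why the paper's representation-based argument is the one that generalizes, even though for the Fibonacci statement itself both proofs are perfectly valid.
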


We also have the following amazing fact about how natural numbers appear in the Garden State.
\begin{fact}{6}
Every positive integer appears exactly once in the Garden and once as a seed, and zero also appears just once as a seed.
\end{fact}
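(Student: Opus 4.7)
The plan is to deduce Fact 6 from the Zeckendorf characterization of the columns of the Wythoff array already noted in the introduction, namely that column $c$ of the Wythoff array consists of precisely the positive integers whose Zeckendorf expansion has exactly $c - 1$ trailing zeros. Once that characterization is in hand, Fact 6 is essentially immediate from Fact 2.

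The seed column is trivial: by construction it equals $0, 1, 2, 3, \ldots$ in order, so every non-negative integer appears exactly once as a seed. It remains to check that every positive integer appears exactly once in the Garden and that zero does not. The latter is clear because each Garden entry is a nonempty sum of positive Fibonacci numbers, hence strictly positive. For the former, let $n \geq 1$ and write its Zeckendorf expansion $n = F_{i_1} + \cdots + F_{i_k}$ with $2 \leq i_1 < i_2 < \cdots < i_k$ and gaps at least two. By Fact 2, the indices $i_1, \ldots, i_k$ are uniquely determined, so the number of trailing zeros $i_1 - 2$ is also uniquely determined, placing $n$ in column $i_1 - 1$. Within that column, the row is the unique $m$ such that $W_{m, 1} = F_2 + F_{i_2 - i_1 + 2} + \cdots + F_{i_k - i_1 + 2}$, so the entire (row, column) pair is uniquely determined.

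The main obstacle is proving the lemma underlying the whole argument: that the Beatty-type sequence $W_{m, 1} = \lfloor \lfloor m\varphi \rfloor \varphi \rfloor$ is a bijection from $\{1, 2, 3, \ldots\}$ onto the set of positive integers whose Zeckendorf expansion ends in $F_2$. The inductive step, that column $c+1$ consists of integers with one more trailing zero than column $c$, follows cleanly from Fact 5(a) together with the fact that $\out$ appends a zero to the Zeckendorf string. The base case, however, is a classical identity about Beatty sequences and the golden ratio that requires honest work with the $\varphi$-expansion of real numbers; I would handle it either by analyzing the successive differences $W_{m+1,1} - W_{m,1}$ and matching them against the gaps between consecutive ``Zeckendorf-ends-in-$F_2$'' integers, or by Beatty-complement arguments involving $\varphi$ and $\varphi^2$.
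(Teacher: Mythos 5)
The paper itself offers no proof of Fact 6: it is quoted from Conway and Ryba in the preliminaries as known background, so there is nothing in-paper to compare your argument against, and I can only judge the proposal on its own terms. Your overall architecture is the standard one and is sound as a plan: column $c$ should consist of the positive integers with exactly $c-1$ trailing zeros in their Zeckendorf expansion, whence existence and uniqueness of each positive integer in the Garden follow from Fact 2; and the inductive step reducing everything to column 1 is fine, since by Fact 5(a) each column is the image of the previous one under $\out$, $\out$ appends a trailing zero, and every integer with at least one trailing zero is $\out$ of the integer obtained by deleting that zero.

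The difficulty is that the two places where actual work is required are both left undone. The central lemma --- that $m \mapsto \lfloor \lfloor m\varphi \rfloor \varphi \rfloor$ enumerates, in increasing order and without repetition, exactly the positive integers whose Zeckendorf expansion ends in $F_2$ --- carries essentially the whole content of Fact 6, and you only name two candidate strategies for it (``I would handle it either by\ldots or by\ldots'') without executing either; as written, nothing is established. Second, the seed column is not $0,1,2,3,\ldots$ ``by construction'': the seed in row $m$ is obtained by running the Fibonacci recurrence backwards, giving $2W_{m,1}-W_{m,2} = \lfloor \lfloor m\varphi \rfloor \varphi \rfloor - \lfloor m\varphi \rfloor$, and identifying this with $m-1$ requires the Beatty-type identity $\lfloor \lfloor m\varphi \rfloor \varphi \rfloor = m + \lfloor m\varphi \rfloor - 1$ --- a fact of exactly the same nature as the lemma you postponed. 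So both halves of Fact 6 rest on unproved floor-function identities, and the proposal is an outline rather than a proof.
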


The next fact explains how extraFibs appear in the array.

\begin{fact}{7}
Every series that satisfies Fibonacci's rule and ends with positive integers is represented in the Garden State.
\end{fact}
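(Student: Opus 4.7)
The plan is to show that any extraFib $(x_n)$ eventually obeys the Garden growth rule $x_{n+1} = \out(x_n)$, and then to pin the tail down inside the Garden State using Fact 6 and Fact 5(a). Setting $r_n = x_{n+1} - \phi\, x_n$, a one-line computation with the Fibonacci rule and the identity $\phi - 1 = \phi^{-1}$ gives $r_{n+1} = -r_n/\phi$, so $|r_n|$ decays geometrically to zero. Fact 3 translates the equation $x_{n+1} = \out(x_n)$ into the condition $r_n \in (-\phi^{-2},\ 1-\phi^{-2})$, and since $\phi^{-2} < 1-\phi^{-2}$ and $|r_n| \to 0$, this condition holds for every sufficiently large $n$.

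Pick $N$ large enough that $x_n > 0$ and $x_{n+1} = \out(x_n)$ for all $n \ge N$. Fact 6 says the positive integer $x_N$ appears exactly once in the Garden, at some position $(i,j)$; Fact 5(a) then gives $W_{i,j+1} = \out(x_N) = x_{N+1}$. A straightforward induction on $k$, using the Fibonacci rule satisfied by both $(x_n)$ and row $i$, promotes this to $W_{i,j+k} = x_{N+k}$ for all $k \ge 0$. Therefore the tail $x_N, x_{N+1}, x_{N+2}, \ldots$ coincides with the tail of row $i$ of the Garden State starting at column $j$, and Fact 7 is established.

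The one bookkeeping point worth flagging is that $x_N$ will also appear as a seed and may appear as a wall term, where it would be followed by $\out(x_N)+1$ or $\out(x_N)-1$, respectively; the equation $x_{N+1} = \out(x_N)$ that we have forced rules both of those out and automatically selects the Garden copy via Fact 5. Beyond this, everything reduces to the geometric decay $|r_n| = |r_0|/\phi^n$, so the only genuine work is the one-line recursion for $r_n$, and I do not anticipate a harder obstacle.
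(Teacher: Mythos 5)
Your argument is correct: the recursion $r_{n+1}=-r_n/\phi$ for $r_n=x_{n+1}-\phi x_n$ follows from $1-\phi=-\phi^{-1}$, the geometric decay forces $r_n\in(-\phi^{-2},1-\phi^{-2})$ eventually, Fact 3 then gives $x_{n+1}=\out(x_n)$ for all large $n$, and Facts 6 and 5(a) together with the two-term propagation of the Fibonacci rule locate the tail in a unique Garden row. Note, however, that the paper never proves Fact 7 itself --- it is quoted from Conway--Ryba as background --- and the argument it actually supplies is for the Tribonacci analog, Fact 7T, which proceeds by an entirely different route: it writes $S_n=aT_{n-1}+(b+c)T_n+bT_{n+1}$, reads this as an \emph{improper} Tribonacci representation $b(b{+}c)a0^{n-4}$, and invokes a canonization procedure (Theorem~\ref{thm:canon} and Corollary~\ref{cor:canon}) showing that once enough trailing zeros are available the canonical form stabilizes, so all sufficiently late terms lie in one row. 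The comparison is instructive: your method hinges on Fact 3 pinning $\out(n)$ down as the \emph{unique} integer in an interval of length $1$, and that is exactly what fails in the Tribonacci setting, where Fact 3T only yields the window $(\alpha n-0.85,\alpha n+0.85)$ of length $1.7$; this is presumably why the authors route 7T through canonization rather than through your (shorter, and in the Fibonacci case perfectly adequate) successor-stabilization argument. One cosmetic point: since ``represented'' in the paper means the tail from some index onward forms an entire row, you should add the one-line remark that matching row $i$ from column $j$ onward extends backward to column $1$ because the Fibonacci recurrence determines the series in both directions from two consecutive terms.
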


By the words ``is represented'' they mean that given an extraFib (which may be extended in both directions), we can find a number $k$, such that all the terms of the extraFib starting from index $k$ form a row in the Garden State.

\begin{fact}{8}
If $X_n$ is any extraFib series, so too is any positive multiple $mX_n$.
\end{fact}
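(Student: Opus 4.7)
The plan is to verify the two defining properties of an extraFib series directly for the sequence $mX_n$: that it satisfies the Fibonacci rule, and that it ends in positive integers. Both will follow immediately from the linearity of the recurrence and the fact that $m$ is a positive integer, so there will be no real obstacle; the argument is essentially one line.

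First I would recall the definition: an extraFib series is a two-sided integer sequence $(X_n)$ satisfying $X_{n+2}=X_{n+1}+X_n$ whose terms are eventually positive (i.e., positive for all sufficiently large $n$). Then, for any positive integer $m$, I would compute
\[
mX_{n+2} \;=\; m(X_{n+1}+X_n) \;=\; mX_{n+1} + mX_n,
\]
which shows the multiplied sequence also obeys the Fibonacci rule. Next I would note that if $N$ is an index beyond which $X_n>0$, then $mX_n>0$ for the same range of $n$, since $m\ge 1$ and the product of positive integers is positive. The terms $mX_n$ are integers because $m$ and $X_n$ are. Hence $(mX_n)$ is an integer sequence, satisfies the Fibonacci rule, and is eventually positive, so it is an extraFib series.

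The only subtlety worth flagging is that ``ends in positive integers'' does not require \emph{every} term to be positive (two-sided extraFibs can have negative or zero entries on the left); it only requires positivity from some point onward. Multiplying by a positive integer preserves the sign of each term, so this condition carries over without issue. Thus the hard part, such as it is, is merely making sure the definition of extraFib is read correctly; the verification itself is immediate.
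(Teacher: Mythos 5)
Your proof is correct: the linearity computation $mX_{n+2}=mX_{n+1}+mX_n$ together with the observation that multiplication by a positive integer preserves eventual positivity is exactly the (immediate) argument the paper relies on, which is why it states Fact 8 without proof and later remarks that it "immediately generalizes" to Fact 8T. Your flag about "ends in positive integers" meaning eventual positivity rather than positivity everywhere is the right reading of the definition.
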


\begin{fact}{9}
The multiples of any extraFib series appear in order in the Garden State.
\end{fact}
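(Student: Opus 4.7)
Let $r(Y)$ denote the row index of the Garden State whose tail equals (a shift of) the extraFib $Y$, well-defined by Fact~7. The plan is to reduce to two subclaims: (i) the rows of the Garden State are termwise ordered, so that for $r<r'$, every entry of row $r$ is strictly less than the corresponding entry of row $r'$; and (ii) the index $n_m$ at which $mX$ aligns with the seed column of its row (so that $mX_{n_m}$ is the seed $r(mX)-1$ and $mX_{n_m+1}$ is the wall $\lfloor r(mX)\cdot\phi\rfloor$) is non-decreasing in $m$. With both in hand, comparing asymptotic Binet coefficients will force $r(m_1X)<r(m_2X)$ whenever $m_1<m_2$. Claim~(i) is immediate: the seed column is $0,1,2,\dots$, the wall column is the strictly increasing lower-Wythoff sequence $\lfloor r\phi\rfloor$, and the Fibonacci recurrence propagates the strict inequality to every subsequent column.

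For claim~(ii), I would first combine Fact~3 with Fact~5(b) to extract the closed form $\out(s)+1=\lfloor(s+1)\phi\rfloor$. The seed-to-wall condition for $mX$ at index $n$ then reads $mX_{n+1}=\lfloor(mX_n+1)\phi\rfloor$, equivalently $mX_{n+1}-mX_n\phi\in(\phi^{-1},\phi]$. Writing $X_n=a\phi^n+b\bar\phi^n$ in Binet form and using $\phi-\bar\phi=\sqrt5$, a direct calculation gives $X_{n+1}-X_n\phi=-\sqrt5\,b\,\bar\phi^n$, so the condition becomes $m\in(\phi^{n-1}/K,\,\phi^{n+1}/K]$ with $K=\sqrt5\,|b|$, only for values of $n$ of the parity that makes the left-hand side positive. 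As $n$ ranges over integers of that parity, these half-open intervals tile $(0,\infty)$ into consecutive pieces of multiplicative width $\phi^2$, so every positive integer $m$ lies in exactly one of them, giving $n_m$ uniquely and non-decreasing in $m$ by construction.

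To conclude, for any extraFib $Y$ in row $r$ with shift $t$ (so $Y_n$ equals the Garden entry in row $r$, column $n+t$, in the tail), the leading Binet coefficient satisfies $a_Y=a_r\phi^t$, and claim~(i) is equivalent to $a_r$ being strictly increasing in $r$. For $Y=mX$ we have $a_{mX}=m\,a_X$ and shift $t_m=1-n_m$, hence $a_{r(mX)}=m\,a_X\,\phi^{n_m-1}$; so for $m_1<m_2$, $a_{r(m_2X)}/a_{r(m_1X)}=(m_2/m_1)\phi^{n_{m_2}-n_{m_1}}>1$ by claim~(ii), and claim~(i) forces $r(m_1X)<r(m_2X)$. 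The main obstacle I anticipate is claim~(ii) — specifically, distilling the closed-form wall rule from Facts~3 and~5, combining it with the Binet identity into the interval condition, and verifying that the unique formal $n_m$ picked out by the tiling coincides with the true seed index guaranteed by Fact~7 (a validity check reducing to $mX_{n_m}\ge 0$, which must hold because Fact~7 asserts a valid seed index exists while the tiling produces at most one candidate).
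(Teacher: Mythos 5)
Your argument is essentially sound, but it takes a genuinely different route from the one this paper takes. The paper never proves Fact 9 itself (it is quoted from Conway and Ryba); what it does prove is the Tribonacci analogue, Fact 9T, and it does so by pure Zeckendorf-type arithmetic: the first-column entry of the row containing $kS$ is the canonization of the improper representation obtained by scaling the digits of $S$ by $k$, the number of trailing zeros needed to canonize is shown to be non-decreasing in $k$ (adding $S_n$ to $kS_n$ ``never moves the rightmost digit leftwards''), and hence the first-column entries satisfy $(k+1)S_{m'}\ge(k+1)S_m>kS_m$, which orders the rows. Your proof instead runs through Binet asymptotics: you reduce row order to the order of leading Binet coefficients via termwise ordering of rows (your claim (i), which plays the same role as the paper's comparison of first-column entries), and you locate the seed index $n_m$ of $mX$ by turning Facts 3 and 5(b) into the floor identity $\out(s)+1=\lfloor(s+1)\phi\rfloor$ and then into the interval condition $mX_{n+1}-\phi\, mX_n\in(\phi^{-1},\phi]$, whose solutions in $n$ tile $(0,\infty)$ in $m$ and are therefore unique and monotone. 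Both work; yours buys an explicit formula for where each multiple sits (the interval $m\in(\phi^{n-1}/K,\phi^{n+1}/K]$ even quantifies the runs of equal $n_m$), while the paper's representation-theoretic argument is the one that survives the passage to Tribonacci, where the wall is no longer $\lfloor r\alpha\rfloor$ and the subdominant roots are complex, so your interval-and-parity analysis would not carry over. Two small points to tighten: claim (i) only gives $a_r$ non-decreasing, and you need the extra observation that two distinct rows cannot share a leading coefficient (their difference would be $b\bar\phi^{\,n}$, which tends to $0$ yet must be a nonzero integer for all large $n$); and your shift bookkeeping $t_m=1-n_m$ is off by a constant from the paper's column indexing, though this cancels in the ratio and is harmless.
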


The parts above related to the Garden, also known as the Wythoff array, are widely known. 

The paper \cite{ConwayRyba2016} also describes how to find each particular extraFib in the Garden State. Given the extraFib $S_n$, we expand it to the left and right and find the out values of every term. The wall term $S_w$ corresponds to the largest $w$, such that $S_{w+1} = \out(w) - 1$.

It is well-known that if we continue extraFibs to the left, negative integers will alternate with positive integers. Thus, Conway and Ryba defined a \textit{reversal} of an extraFib, which is obtained by reversing the order and changing negative signs to positive ones. One can check the following fact.

\begin{fact}{10}
The reversal of an extraFib series is also an extraFib series.
\end{fact}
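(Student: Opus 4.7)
The plan is to realize the reversal as the explicit substitution $Y_n := (-1)^n X_{-n}$, possibly followed by an overall sign flip. The factor $(-1)^n$ is precisely what absorbs the alternating signs in the left tail of $X$, so this substitution faithfully implements the operation of ``reversing the order and changing negative signs to positive ones.''

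I would first check that $Y$ satisfies the Fibonacci rule. Rewriting the original recurrence as $X_{-n-1}=X_{-n+1}-X_{-n}$, one computes
\[
Y_n + Y_{n-1} = (-1)^n X_{-n} + (-1)^{n-1} X_{-n+1} = (-1)^n\bigl(X_{-n}-X_{-n+1}\bigr) = (-1)^{n+1} X_{-n-1} = Y_{n+1}.
\]
Next I need $Y_n>0$ for all sufficiently large $n$. This reduces to the claim that in any non-trivial extraFib, $X_{-n}$ has strictly alternating signs once $n$ is large enough. I would establish this in two short steps. First, some negative value must occur on the left: otherwise $X_n\ge 0$ everywhere and, by the recurrence, $X_{n+1}=X_n+X_{n-1}\ge X_n$, so $X_n$ is non-increasing as $n$ decreases; being a non-negative integer sequence, it is eventually constant, and the constant $c$ satisfies $c=2c$, forcing $c=0$, hence $X\equiv 0$, contradicting ``ends in positive integers.'' Second, once a pair of adjacent terms has opposite nonzero signs, the identity $X_{n-1}=X_{n+1}-X_n$ propagates strict alternation indefinitely to the left by induction. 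Combining these, $X_{-n}$ has sign $\pm(-1)^n$ for large $n$, and so $Y_n$ (or else $-Y_n$) is positive from some index onward.

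The main obstacle is conceptual rather than technical: once the reversal is identified with the substitution $Y_n=(-1)^n X_{-n}$, both defining properties of an extraFib for $Y$ follow from one-line computations. The only ingredient requiring real argument is the eventual strict alternation on the left tail. This can alternatively be read off from the closed form $X_n=A\phi^n + B(-\phi^{-1})^n$, where $B\neq 0$ for any non-trivial integer extraFib (otherwise $X_n=A\phi^n$ would force $A=0$ since $\phi$ is irrational), so the second term dominates as $n\to-\infty$ and contributes the alternating factor $(-1)^n$.
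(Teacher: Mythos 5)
The paper does not actually prove this fact: it is quoted from Conway--Ryba with the remark ``one can check the following fact,'' so there is no in-paper argument to compare against. Your proof is correct and is the natural one: the substitution $Y_n=(-1)^nX_{-n}$ (up to a global sign) turns the reversal into a genuine series satisfying the Fibonacci rule, the recurrence check is right, and your two-step argument for eventual strict sign alternation on the left (a negative term must occur; a pair of adjacent opposite-sign terms then propagates alternation leftward via $X_{n-1}=X_{n+1}-X_n$) is sound. The only hairline gap is the passage from ``some term is negative'' to ``some adjacent pair has opposite nonzero signs'': a zero could sit at the transition (e.g.\ $X_k<0$, $X_{k+1}=0$, $X_{k+2}>0$), but then $X_{k}$ and $X_{k+1}-X_k$ immediately supply such a pair one step to the left, so the argument closes; your Binet-form alternative also covers this. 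It is worth being explicit that ``the reversal is an extraFib'' must be read at the level of tails (as the paper does for Fact 7), since $|X_{-n}|$ itself need not satisfy the recurrence near the sign changes --- your $Y_n$ is exactly the extraFib whose positive tail agrees with $|X_{-n}|$.
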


In our paper, we prove analogs of facts described above for the Tribonacci case. We denote our analogs of these facts with a letter T after the number.

But first, we need some more information about Fibonacci numbers.

\subsection{The Fibonacci word}

A \textit{Fibonacci word} is an infinite word in a two-letter alphabet formed by the following infinite process. 

Let $S_{0} = a$ and $S_{1} = ab$. Now $S_{n}=S_{n-1}S_{n-2}$, the concatenation of the previous word and the one before that.

The infinite Fibonacci word is the limit $S_{\infty}$, that is, the unique infinite sequence that contains each $S_{n}$, for finite $n$, as a prefix.

The first few terms of the Fibonacci word are described in sequence A003849:
\[abaababaabaab\ldots.\]

The positions of $a$'s form the lower Wythoff sequence A000201. These are the numbers from the odd-numbered columns in the Wythoff array. Moreover, these are the wall numbers. The positions of $b$'s form the complementary sequence: the upper Wythoff sequence A001950.

Now we describe the background for the Tribonacci numbers.

\subsection{Tribonacci numbers}\label{sec:tribonacci}

The Tribonacci numbers $T_n$ start with $T_0 = 0$, $T_1 = 0$, $T_2 = 1$, and each Tribonacci number thereafter is calculated by summing up the previous 3 numbers:
\[T_{n} = T_{n-1} + T_{n-2} + T_{n-3}\]
for $n > 2$. The Tribonacci sequence, A000073, would thus be 
\[0,\ 0,\ 1,\ 1,\ 2,\ 4,\ 7,\ 13,\ 24,\ 44,\ 81,\ 149,\ 274,\ 504,\ 927,\ 1705,\ \ldots\]. 

The \textit{Tribonacci representation} of an integer $k$ is its expression as a sum of positive Tribonacci numbers, where each Tribonacci number can only be used once, and there can be no 3 consecutive Tribonacci numbers in the sum. Note that number 1 can only be used once, even though it appears twice in the sequence.

The Tribonacci representation can be viewed as numbers written in the Tribonacci base. We denote the Tribonacci representation of integer $N$ as $(N)_T$ and the evaluation of a string $w$ written in the Tribonacci base as $[w]_T$. For example, 9 can be expressed as $7+2$, making 1010 a Tribonacci representation of 9, where the rightmost digit 0 represents that there is no 1 in the sum, the 1 to the left of that represents there is a 2, the 0 to the left of that represents that there are no 4s, and finally the leftmost 1 represents that there is one 7. Thus, we write $(9)_T = 1010$ and $[1010]_T = 9$. 

One can find a unique Tribonacci representation by using a greedy algorithm. Start by finding the largest Tribonacci number that is less than our number and subtract it. Then, repeat the process. This method ensures that no three consecutive Tribonacci numbers are used. This is similar to Fact 2 in the Conway and Ryba paper \cite{ConwayRyba2016} in that the Zeckendorf representation is unique. The following theorem is proved in \cite{Keller1972}.

\begin{fact}{2T}
Every natural number has a unique Tribonacci representation.
\end{fact}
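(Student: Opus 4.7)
The plan is to prove Fact 2T in two stages, existence and uniqueness. To avoid the ambiguity caused by $T_2 = T_3 = 1$, I would work with the strictly increasing Tribonacci values indexed from $T_3$ onward, treating a representation as a finite set of indices $S \subseteq \{3, 4, 5, \ldots\}$ containing no three consecutive integers and satisfying $\sum_{k \in S} T_k = N$. The edge case $N = 0$ is handled by the empty representation.

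For existence, I would follow the greedy algorithm described in the paragraph before the statement. Given $N \ge 1$, let $k_1$ be the largest index with $T_{k_1} \le N$, set $N_1 = N - T_{k_1}$, and iterate until the running value is zero. The Tribonacci recurrence gives $N_1 < T_{k_1+1} - T_{k_1} = T_{k_1-1} + T_{k_1-2}$, so the sequence of indices strictly decreases and the process terminates in finitely many steps. To verify the no-three-consecutive constraint I would argue by cases on the size of $N_{i+1}$: if $N_{i+1} < T_{k_i-2}$ then $k_{i+1} \le k_i - 3$; if $T_{k_i-2} \le N_{i+1} < T_{k_i-1}$ then $k_{i+1} = k_i - 2$; and if $T_{k_i-1} \le N_{i+1}$ then $k_{i+1} = k_i - 1$, in which case $N_{i+2} = N_{i+1} - T_{k_i-1} < T_{k_i-2}$, forcing $k_{i+2} \le k_i - 3$. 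So three consecutive indices never appear among the $k_i$.

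For uniqueness, I would prove the key lemma that any valid partial sum $\sum_{j \in S} T_j$ with $S \subseteq \{3, \ldots, k-1\}$ and no three consecutive elements satisfies $\sum_{j \in S} T_j < T_k$. The proof is by strong induction on $k$: after dispatching small base cases, split on whether $k-1 \in S$; if $k-1 \notin S$ the bound follows from the inductive hypothesis applied at $k-1$; if $k-1 \in S$, split further on $k-2 \in S$, and in the only delicate case $\{k-1, k-2\} \subseteq S$ the no-three-consecutive rule forces $k-3 \notin S$, so the remainder is bounded by $T_{k-3}$ from induction, yielding a total strictly less than $T_{k-1} + T_{k-2} + T_{k-3} = T_k$. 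Uniqueness then follows immediately: given two distinct valid representations $S \ne S'$ of the same $N$, let $k$ be the largest index in the symmetric difference; without loss of generality $k \in S \setminus S'$, so the contribution from indices $\le k$ is at least $T_k$ on the $S$-side and strictly less than $T_k$ on the $S'$-side by the lemma, while the contributions from indices $> k$ agree, contradicting equality of the totals.

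The main obstacle is executing the inductive step of the key lemma so that the Tribonacci recurrence $T_k = T_{k-1} + T_{k-2} + T_{k-3}$ exactly matches the upper bound when all of $k-1, k-2, k-3$ are present; the nested case split on which of the top few indices lie in $S$ is where the argument could slip if the no-three-consecutive constraint is applied sloppily. Otherwise, the proof is a direct adaptation of the standard proof of Zeckendorf's theorem with ``no two consecutive'' replaced by ``no three consecutive'' and the Fibonacci recurrence by the Tribonacci recurrence.
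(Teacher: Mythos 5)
Your proof is correct. Note, though, that the paper does not actually prove Fact 2T: it cites Keller (1972) for the full statement and only sketches the existence half, namely the same greedy algorithm you use (repeatedly subtract the largest Tribonacci number not exceeding the running value). Your existence argument fleshes out that sketch with the case analysis showing the greedy indices never include three consecutive values, and your uniqueness half supplies something the paper omits entirely: the key bound $\sum_{j\in S}T_j < T_k$ for $S\subseteq\{3,\dots,k-1\}$ with no three consecutive indices, proved by strong induction with the delicate case $\{k-1,k-2\}\subseteq S$ forcing $k-3\notin S$ so that the total is strictly below $T_{k-1}+T_{k-2}+T_{k-3}=T_k$, followed by the standard largest-index-in-the-symmetric-difference argument. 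This is the natural adaptation of the textbook Zeckendorf proof and is exactly what a self-contained version of the paper's citation would look like; your choice to index representations by subsets of $\{3,4,\dots\}$ also cleanly sidesteps the $T_2=T_3=1$ ambiguity that the paper handles only with the informal remark that ``1 can only be used once.''
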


Consider the characteristic equation for the Tribonacci sequence:
\[x^3-x^2-x-1=0.\]

We can define the Tribonacci successor similar to how the Fibonacci successor is defined. Suppose integer $n$ has a Tribonacci representation
\[n = T_{i_1} + \cdots +T_{i_k}.\] 
We define the \textit{Tribonacci successor} of $n$ as $\out(n)$, where
\[\out(n) = T_{i_1+1} + \cdots +T_{i_k+1}.\] 

The following analog of Fact 1 follows from the uniqueness of the Tribonacci representation.

\begin{fact}{1T}
The Tribonacci successor is well-defined.
\end{fact}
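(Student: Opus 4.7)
The definition of $\out(n)$ specifies an integer by describing a sum of Tribonacci numbers built from the indices $\{i_1, \ldots, i_k\}$ appearing in a Tribonacci representation of $n$. ``Well-defined'' therefore means two things: (i) those indices are determined by $n$ alone (so the formula does not depend on which representation we chose), and (ii) the resulting integer $\out(n)$ itself admits a Tribonacci representation, so that the operation can in principle be iterated. My plan is to deduce (i) immediately from Fact~2T, and then verify (ii) by a short combinatorial check on the shifted index set.

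For (i), note that Fact~2T asserts that every positive integer $n$ has a unique Tribonacci representation. Hence the (unordered) index set $\{i_1, \ldots, i_k\}$ in $n = T_{i_1} + \cdots + T_{i_k}$ is a function of $n$. The quantity $\out(n) = T_{i_1 + 1} + \cdots + T_{i_k + 1}$ is then a sum over a uniquely determined set of indices, so it depends only on $n$. This is essentially a one-line deduction, and is the analog of the Fibonacci argument in Conway--Ryba.

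For (ii), I would check that the shifted sum $T_{i_1 + 1} + \cdots + T_{i_k + 1}$ is itself a legal Tribonacci representation. By assumption, the original indices $i_1 < \cdots < i_k$ contain no three consecutive integers, i.e.\ there is no $j$ with $\{j, j+1, j+2\} \subseteq \{i_1, \ldots, i_k\}$. Shifting every index up by one is an order-preserving bijection on the integers, so it carries the property ``no three consecutive'' to the shifted set. Combined with the convention that indices start at $3$ (so $i_1 \geq 3$, hence $i_1 + 1 \geq 4$), the shifted representation is admissible.

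I do not expect a genuine obstacle here, because uniqueness of the Tribonacci representation does almost all of the work. The only place one must be slightly careful is the convention about the two copies of $1$ in the Tribonacci sequence: the representation is pinned down by using $T_3 = 1$ rather than $T_2 = 1$, which makes the shift $i \mapsto i+1$ unambiguous. Once this convention is fixed, (i) and (ii) together give the statement.
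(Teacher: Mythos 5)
Your proof is correct and follows the same route as the paper, which simply notes that Fact~1T ``follows from the uniqueness of the Tribonacci representation'' (Fact~2T) and gives no further argument. Your part~(i) is exactly that deduction, and your part~(ii) --- checking that shifting indices preserves the ``no three consecutive'' condition and correctly pinning the convention $i_1 \geq 3$ so that $T_3 = 1$ rather than $T_2 = 1$ is used --- is a harmless and accurate elaboration of details the paper leaves implicit.
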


The Tribonacci numbers grow approximately as a geometric series with the ratio equal to the Tribonacci constant. Thus,
\[\out(n) \approx \alpha n.\]

\subsection{The Tribonacci word}\label{sec:tribonacciword}

The \textit{Tribonacci word} is the limit of the sequence of words $W(n)$, where $W(n)$ is a string of digits $a$, $b$, and $c$ formed in the following manner: $W(0) = a$, $W(1) = ab$, and $W(2) = abac$. Then $W(n) = W(n-1)W(n-2)W(n-3)$ is the concatenation of the previous Tribonacci word, the one before it, and the one before that:
\[abacabaabacababac.\]

The following theorem \cite{DucheneRigo2008} by Duch\^{en}e and Rigo connects the positions of different letters in the Tribonacci word with the Tribonacci representations of said positions.

\begin{theorem}[\cite{DucheneRigo2008}]
\label{thm:abcCorrespondsTo0-01-11}
The $n$th symbol of the Tribonacci word is $a$, $b$, or $c$ if the Tribonacci representation of $n-1$ ends in 0, 01, or 11, respectively.
\end{theorem}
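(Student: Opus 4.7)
The plan is to prove the statement by strong induction on the level $m$, showing for every $m \geq 2$ and every position $n$ with $1 \le n \le |W(m)| = T_{m+3}$ that the $n$-th letter of $W(m)$ agrees with the prescription coming from the last two digits (padded by leading zeros if necessary) of the Tribonacci representation of $n - 1$. Since the infinite Tribonacci word has $W(m)$ as a prefix for every $m$, this covers all positive integers. The base case $m = 2$ uses $W(2) = abac$, checked directly against the Tribonacci representations $0$, $1$, $10$, $11$ of $0$, $1$, $2$, $3$.

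For the inductive step I would decompose $W(m) = W(m-1) W(m-2) W(m-3)$ and split on where position $n$ falls inside $W(m)$. If $n \le T_{m+2}$, then position $n$ of $W(m)$ coincides with position $n$ of $W(m-1)$, and the inductive hypothesis transfers without change. If $T_{m+2} < n \le T_{m+2} + T_{m+1}$, set $n' = n - T_{m+2}$, so that the $n$-th letter of $W(m)$ equals the $n'$-th letter of $W(m-2)$; here I would verify that the Tribonacci representation of $n - 1$ is obtained from that of $n' - 1$ by inserting a $1$ at the digit corresponding to $T_{m+2}$, which leaves the last two digits unchanged. For the third block, $T_{m+2} + T_{m+1} < n \le T_{m+3}$, the analogous step sets $n' = n - T_{m+2} - T_{m+1}$ and inserts $1$'s at the digits for $T_{m+2}$ and $T_{m+1}$. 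In each case the inductive hypothesis applied to the smaller word $W(m-1)$, $W(m-2)$, or $W(m-3)$, combined with this preservation of the final two digits, yields the claimed letter at position $n$.

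The main obstacle is the bookkeeping ensuring that the modified string really is the Tribonacci representation of $n - 1$, which by Fact 2T is unique. What has to be checked is that no three consecutive $1$'s appear after the prepending, and that the string really evaluates to $n-1$ (which is clear from $n-1 = T_{m+2} + (n'-1)$ or $n-1 = T_{m+2} + T_{m+1} + (n'-1)$). The key bounds are $n' - 1 < T_{m+1}$ in the middle block and $n' - 1 < T_m$ in the last block; these force the top digits of the representation of $n' - 1$ to vanish (the digit for $T_{m+1}$ in the middle case; the digits for $T_{m+1}$ and $T_m$ in the last case), so each prepended $1$ is followed by a $0$ and no forbidden run of three $1$'s appears. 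Once this check is in place the induction closes by direct comparison of the last two digits.
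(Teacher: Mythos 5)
The paper offers no proof of this theorem at all: it is quoted directly from Duch\^{e}ne and Rigo \cite{DucheneRigo2008}, so there is no internal argument to measure yours against. Your induction is a correct, self-contained proof and is the natural way to establish this kind of statement: decompose $W(m)=W(m-1)W(m-2)W(m-3)$, locate position $n$ in one of the three blocks, and observe that passing from $n'-1$ to $n-1$ prepends the digit pattern $10$ (middle block) or $110$ (last block) to the Tribonacci representation. The two ingredients you isolate are exactly the ones needed: the bounds $n'-1<T_{m+1}$ (resp.\ $n'-1<T_m$) force the adjacent high digits of $(n'-1)_T$ to vanish, so no run of three $1$'s is created, and uniqueness (Fact 2T) then certifies that the assembled string really is $(n-1)_T$. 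One small patch is required: in the third block of the very first inductive step ($m=3$, hence $n=7$, $n'=1$), the prepended $1$ at the $T_4$-position lands on the \emph{penultimate} digit, so the claim that the last two digits are unchanged is literally false there (the padded pair $00$ becomes $10$). The conclusion survives because the final digit is $0$ on both sides and a trailing $0$ already determines the letter $a$, but you should either note this explicitly or take $W(4)$ as the base case, after which every prepended digit sits strictly above the last two positions (or coincides with a position that is $0$ on both sides) and the ``last two digits unchanged'' step goes through verbatim.
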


This is equivalent to the following statement. The $n$th symbol of the Tribonacci word is $a$, $b$, or $c$ if the Tribonacci representation of $n$ ends in the number of trailing zeros that equal 0, 1, or 2 modulo 3, respectively. Equivalently, The $n$th symbol of the Tribonacci word is $a$, $b$, or $c$, if $n$ is in the column number of the Trithoff array, that has remainder 1, 2, or 0 modulo 3, respectively.

The following three complementary sequences correspondingly describe the positions of letters $a$, $b$, and $c$ in the Tribonacci word.

Sequence A003144 describes the positions of the letter $a$ in the Tribonacci word.
\[1,\ 3,\ 5,\ 7,\ 8,\ 10,\ 12,\ 14,\ 16,\ 18,\ 20,\ 21,\ 23,\ 25,\ 27,\ 29,\ 31,\ \ldots.\]
It is known \cite{DSS2019} that A003144($n$) is always either $\lfloor \alpha n\rfloor$ or $\lfloor \alpha n + 1 \rfloor$ for all $n$, where $\alpha$ is the Tribonacci constant. In other words, $\alpha n - 1 < A003144(n) < \alpha n + 1$.

Sequence A003145 describes the positions of the letter $b$ in the Tribonacci word.
\[2,\ 6,\ 9,\ 13,\ 15,\ 19,\ 22,\ 26,\ 30,\ 33,\ 37,\ 39,\ 43,\ 46,\ 50,\ 53,\ 57,\ \ldots.\]

Sequence A003146 describes the positions of the letter $c$ in the Tribonacci word.
\[4,\ 11,\ 17,\ 24,\ 28,\ 35,\ 41,\ 48,\ 55,\ 61,\ 68,\ 72,\ 79,\ 85,\ 92,\ 98,\ 105, \ldots.\]

\section{Bounds on the successor}\label{sec:bounds}

The Tribonacci sequence grows approximately as a geometric progression with the ratio $\alpha$. It is well-known that for $n > 2$, we have $T_n < \alpha^{n-2} < T_{n+1}$, see \cite{BravoLuca2013}.

We are interested in a Tribonacci analog of Fact 3, which provides bounds on the Fibonacci successor. We start by estimating the growth of the Tribonacci sequence in terms of the roots of the characteristic equation defined in the previous section.

\begin{lemma}
\label{lemma:alphadiff}
We have
\[|T_{n+1}-\alpha T_n | <\frac{2\alpha^{-\frac{n}{2}}}{|\beta-\gamma|}.\]
\end{lemma}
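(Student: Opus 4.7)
The plan is to obtain an explicit closed form for $U_n := T_{n+1}-\alpha T_n$ from the Binet-style expansion of the Tribonacci sequence, and then apply the triangle inequality. First, some facts about the roots. Factoring the characteristic polynomial as $x^3-x^2-x-1=(x-\alpha)(x-\beta)(x-\gamma)$, Vieta's formulas give $\alpha\beta\gamma=1$, and since $\beta,\gamma$ form a complex-conjugate pair, $\beta\gamma=|\beta|^2$. Combining these yields $|\beta|=|\gamma|=\alpha^{-1/2}$, which is exactly what produces the $\alpha^{-n/2}$ factor on the right-hand side of the lemma.

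Next, I observe that since the Tribonacci sequence is annihilated by the shift-operator polynomial $(S-\alpha)(S-\beta)(S-\gamma)$, where $S$ denotes the shift $T_n\mapsto T_{n+1}$, the sequence $U_n=(S-\alpha)T_n$ is annihilated by the quadratic $(S-\beta)(S-\gamma)$. Hence $U_n=B\beta^n+C\gamma^n$ for constants $B,C$ which the initial values $T_0=T_1=0$, $T_2=1$ determine via $U_0=0$ and $U_1=1$; solving the resulting $2\times 2$ linear system gives $B=-C=1/(\beta-\gamma)$, so that
\[U_n=\frac{\beta^n-\gamma^n}{\beta-\gamma}.\]
The triangle inequality then gives
\[|U_n|=\frac{|\beta^n-\gamma^n|}{|\beta-\gamma|}\le\frac{|\beta|^n+|\gamma|^n}{|\beta-\gamma|}=\frac{2\alpha^{-n/2}}{|\beta-\gamma|},\]
which is the asserted bound, up to strictness.

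The one subtlety, and the main obstacle I anticipate, is promoting $\le$ to the strict $<$ stated in the lemma. Equality in the triangle inequality would force $\beta^n$ to be a nonnegative real multiple of $-\gamma^n=-\overline{\beta^n}$, meaning $\beta^n$ is purely imaginary, equivalently $(\beta/\gamma)^n=-1$. Ruling this out reduces to showing that $\beta/\gamma$ is not a root of unity: any root of unity $\zeta\in\mathbb{Q}(\beta)$ would have degree $\phi(m)\mid 3$, forcing $\zeta=\pm1$; but $\beta=\gamma$ is impossible since $\beta\notin\mathbb{R}$, and $\beta=-\gamma$ is impossible since $\beta$ is not purely imaginary (the real part of $\beta$ is nonzero, as one can verify directly from $\beta+\gamma=1-\alpha\neq 0$). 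Strict inequality therefore holds for every $n$.
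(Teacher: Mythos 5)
Your derivation of the closed form $T_{n+1}-\alpha T_n=\frac{\beta^n-\gamma^n}{\beta-\gamma}$ and the subsequent triangle-inequality estimate is essentially the paper's proof: the paper gets the same expression by substituting the explicit Binet-type formula for $T_n$ and cancelling the $\alpha^n$ terms, whereas you get it by factoring the shift operator and matching initial conditions, but the key identity and the final bound $|\beta|=|\gamma|=\alpha^{-1/2}$ are identical. In fact you are more careful than the paper on one point: the paper only establishes ``$\leq$'' and then asserts the strict inequality of the statement without comment, while you explicitly address strictness.

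That said, your strictness argument has a flaw. You place the putative root of unity $\beta/\gamma$ in $\mathbb{Q}(\beta)$, but $\gamma\notin\mathbb{Q}(\beta)$: the discriminant of $x^3-x^2-x-1$ is $-44$, not a square, so the Galois group is $S_3$ and the splitting field has degree $6$; if $\mathbb{Q}(\beta)$ contained $\gamma$ it would contain all three roots and be the splitting field, which is impossible for a degree-$3$ field. So the bound $\phi(m)\mid 3$ is not justified. The conclusion is still correct and can be repaired, e.g.: any root of unity in the splitting field $K$ generates an abelian subextension of $\mathbb{Q}$, hence one of degree $1$ or $2$ (the only abelian quotients of $S_3$), and the unique quadratic subfield of $K$ is $\mathbb{Q}(\sqrt{-11})$, which contains no roots of unity other than $\pm1$; then your cases $\beta=\pm\gamma$ finish the job. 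A more elementary route: equality would force $\beta^n+\gamma^n=0$, but $\alpha^n+\beta^n+\gamma^n$ is a rational integer (the Tribonacci--Lucas power sum), so this would make $\alpha^n$ an integer, which is false since the conjugates of $\alpha^n$ have absolute value $\alpha^{-n/2}\neq\alpha^n$ for $n\geq1$, and $n=0$ is checked directly.
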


\begin{proof}
The explicit formula for $T_n$ in terms of roots of the characteristic equation is well-known \cite{Spickerman1980}:
\[T_n=\frac{\alpha^n}{(\alpha-\beta)(\alpha-\gamma)}+\frac{\beta^n}{(\beta-\alpha)(\beta-\gamma)}+\frac{\gamma^n}{(\gamma-\alpha)(\gamma-\beta)}.\]
We compute 
\begin{multline*}
T_{n+1}-\alpha T_n=\frac{\alpha^{n+1}}{(\alpha-\beta)(\alpha-\gamma)}+\frac{\beta^{n+1}}{(\beta-\alpha)(\beta-\gamma)}+\frac{\gamma^{n+1}}{(\gamma-\alpha)(\gamma-\beta)} - \\
 \frac{\alpha\alpha^n}{(\alpha-\beta)(\alpha-\gamma)}+\frac{\alpha\beta^n}{(\beta-\alpha)(\beta-\gamma)}+\frac{\alpha\gamma^n}{(\gamma-\alpha)(\gamma-\beta)} = \frac{\beta^n}{\beta-\gamma}+\frac{\gamma^n}{\gamma-\beta}.
\end{multline*}
Taking the absolute value of both sides, we get $|T_{n+1}-\alpha T_n| \leq |\frac{\beta ^n}{\beta - \gamma}| + |\frac{\gamma ^n}{\gamma - \beta}|$. Since $|\beta|=|\gamma|$, we get  $|T_{n+1}-\alpha T_n| \leq 2\frac{|\beta| ^n}{|\beta - \gamma|}$. Since $|\beta|=|\gamma|$ and $\alpha\beta\gamma=1$, we know that $|\beta|^n=\alpha^{-\frac{n}{2}}$. This lemma follows.
\end{proof}

\begin{fact}{3T}
If $\out(n)$ is the Tribonacci successor of $n$, then $\alpha n-0.85 < \out(n)<\alpha n+ 0.85$.
\end{fact}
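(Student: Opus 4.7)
The plan is to express $\out(n) - \alpha n$ in closed form via the identity $T_{i+1} - \alpha T_i = (\beta^i - \gamma^i)/(\beta - \gamma)$, which is derived in the proof of Lemma~\ref{lemma:alphadiff}. Writing $n = T_{i_1} + \cdots + T_{i_k}$ in Tribonacci form and summing, and using $\gamma = \overline{\beta}$ together with $\beta - \gamma = 2i\operatorname{Im}(\beta)$, I get
\[
\out(n) - \alpha n \;=\; \sum_{j=1}^{k}(T_{i_j+1} - \alpha T_{i_j}) \;=\; \frac{\operatorname{Im}(Q_n)}{\operatorname{Im}(\beta)}, \qquad Q_n := \sum_{j=1}^{k}\beta^{i_j}.
\]
The bound to prove becomes $|\operatorname{Im}(Q_n)| < 0.85 \cdot \operatorname{Im}(\beta)$.

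To bound the right-hand side, the key step is the closed form obtained from the characteristic equation $\beta^3 = \beta^2 + \beta + 1$, which gives $1 - \beta^3 = -\beta(1 + \beta)$. The formal sum over the densest admissible index set $\{3, 4, 6, 7, 9, 10, \ldots\}$ (the $i \geq 3$ with $i \not\equiv 2 \pmod 3$, so that no three consecutive integers occur) then telescopes:
\[
\sum_{k=0}^{\infty}\bigl(\beta^{3k+3} + \beta^{3k+4}\bigr) \;=\; \frac{\beta^3(1+\beta)}{1 - \beta^3} \;=\; -\beta^2.
\]
Since $2\operatorname{Re}(\beta) = 1 - \alpha$, I can compute $\operatorname{Im}(-\beta^2) = -2\operatorname{Re}(\beta)\operatorname{Im}(\beta) = (\alpha - 1)\operatorname{Im}(\beta)$, so the limiting value of $|\out(n) - \alpha n|$ along this sequence of Tribonacci representations is exactly $\alpha - 1 \approx 0.8393$, which is strictly less than $0.85$.

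The final step is to verify that every finite valid Tribonacci representation yields $|\operatorname{Im}(Q_n)|$ below this limit by at least the slack $0.85 - (\alpha - 1) \approx 0.011$. The main obstacle is handling representations that include indices $\equiv 2 \pmod 3$ (which are allowed as long as no three are consecutive), since these are not truncations of the telescoping series above. Because $\arg\beta$ is an irrational multiple of $\pi$, the sign of $\operatorname{Im}(\beta^i) = |\beta|^i\sin(i\arg\beta)$ oscillates without strict periodicity, so a clean monotonicity argument is not available; however, the exponential decay $|\beta|^i = \alpha^{-i/2}$ means that only finitely many small-index configurations contribute non-negligibly, and those can be checked to stay within the bound by a direct computation, with the tail uniformly absorbed into the numerical margin.
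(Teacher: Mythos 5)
Your opening move is a genuine improvement on the paper's: the exact identity $\out(n)-\alpha n=\operatorname{Im}\bigl(\sum_j\beta^{i_j}\bigr)/\operatorname{Im}(\beta)$ keeps the signs of the individual contributions, whereas the paper's proof only applies the triangle inequality term by term (via Lemma~\ref{lemma:alphadiff}), which is why its analytic bound stalls at $1.917$ and it must fall back on computation. The telescoping evaluation $\sum_{k\ge0}(\beta^{3k+3}+\beta^{3k+4})=-\beta^2$ and the value $\alpha-1$ are also correct as the limit along that particular family of representations.

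The gap is that this family is not extremal, so the ``slack'' of $0.85-(\alpha-1)\approx 0.011$ on which your final step leans does not exist. The sign of $\operatorname{Im}(\beta^i)=|\beta|^i\sin(i\psi)$ is governed by the irrational rotation $\psi\approx 2.176$, and since $3\psi-2\pi\approx 0.245\neq 0$ the sign pattern agrees with ``$i\not\equiv 2\pmod 3$'' only for $i\le 12$; thereafter it drifts. The set of indices with positive contribution is the paper's sequence $P$ (A352719), which \emph{excludes} $13,16,19,22,25,\dots$ (these lie in $Q$, so your candidate set includes terms that pull the sum down) and \emph{includes} $15,18,21,24,26,27,\dots$ (note $26\equiv 2\pmod 3$). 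Summing $\operatorname{Im}(\beta^i)/\operatorname{Im}(\beta)$ over $P\cap[3,\infty)$ — which is admissible, since the paper shows $P$ never contains three consecutive integers, and whose prefixes are exactly the record-setters of the paper's second proposition — gives approximately $0.8485$, not $0.8393$. So the true margin below $0.85$ is on the order of $0.0015$, an order of magnitude smaller than you assume, and the statement that every representation stays below the limit $\alpha-1$ is false. Consequently the ``direct computation'' you defer to in the last paragraph is not a routine mop-up of small cases against a comfortable margin; it \emph{is} the proof. The paper supplies precisely this: an exhaustive computation over representations with at most $35$ Tribonacci digits showing the maximum is below $0.849$, together with a geometric tail bound $<0.0001$ for the higher digits. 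Until you either perform that computation or rigorously bound $\sum_{i\in P,\,i\ge 3}\operatorname{Im}(\beta^i)/\operatorname{Im}(\beta)$ from above by something under $0.85$, the claimed inequality is not established.
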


\begin{proof}
Suppose the Tribonacci representation of $n$ is $T_a+T_b+\cdots$, where $a \ge 3$. Then the successor is $T_{a+1}+T_{b+1}+\cdots$. By Lemma~\ref{lemma:alphadiff}, we have
\[| \out(n)-\alpha n|<\frac{2\alpha^{-\frac{a}{2}}}{|\beta-\gamma|}+\frac{2\alpha^{-\frac{b}{2}}}{|\beta-\gamma|}+\cdots.\]
Given that the Tribonacci representation excludes three consecutive Tribonacci numbers, we can bound the expression as 
\begin{multline*}
\frac{2}{|\beta-\gamma|}\left(\alpha^{-\frac{3}{2}} + \alpha^{-\frac{4}{2}} + \alpha^{-\frac{6}{2}} + \alpha^{-\frac{7}{2}} + \cdots\right) = \\ 
\frac{2}{|\beta-\gamma|}\left(\alpha^{-\frac{3}{2}} + \alpha^{-\frac{4}{2}}\right)\sum_{m=0}^\infty \alpha^{-\frac{3m}{2}} = \frac{2(\alpha^{-\frac{3}{2}} + \alpha^{-\frac{4}{2}})}{|\beta-\gamma|(1-\alpha^{-\frac{3}{2}} )} \approx 1.91746 < 2.
\end{multline*}

We can improve this bound further by adding computational results. We calculated the largest possible difference for numbers that have a Tribonacci representation up to 35 Tribonacci digits. The largest possible difference was less than 0.849. By a similar calculation to the one above, the larger digits can contribute to the difference no more than 
\[\frac{2(\alpha^{-\frac{36}{2}} + \alpha^{-\frac{37}{2}})}{|\beta-\gamma|(1-\alpha^{-\frac{3}{2}} )} < 0.0001.\]
Thus, the maximum difference is not more than 0.85.
\end{proof}

Consider the sequence of integers $n$ such that $T_{n+1} - \alpha T_n$ is positive (This is now sequence A352719.): 
\[0,\ 1,\ 3,\ 4,\ 6,\ 7,\ 9,\ 10,\ 12,\ 15,\ 18,\ 21,\ 24,\ 26,\ 27,\ 29,\ 30,\ 32,\ 33,\ 35,\ 36,\ 38,\ 41,\ 44,\ \ldots.\]

Let us denote this sequence as $P(k)$, where $P(0) = 0$ and $P(1) = 1$. We denote by $Q(n)$ the complementary sequence, with $Q(1) = 2$. The sequence $Q(n)$ is then the sequence of integers $n$ such that $T_{n+1} - \alpha T_n$ is negative (This is now sequence A352748.):
\[2,\ 5,\ 8,\ 11,\ 13,\ 14,\ 16,\ 17,\ 19,\ 20,\ 22,\ 23,\ 25,\ 28,\ 31,\ 34,\ 37,\ 39,\ 40,\ 42,\ 43,\ 45,\ \ldots.\]

\begin{proposition}
The value $T_{n+1} - \alpha T_n$ cannot have the same sign for any three consecutive integers $n$.
\end{proposition}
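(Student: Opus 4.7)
The plan is to promote the sequence $D_n := T_{n+1} - \alpha T_n$ to a Tribonacci-like sequence in its own right, and then rule out three consecutive same-sign terms by a growth-versus-decay argument.

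First, I would check that $D_n$ satisfies the Tribonacci recurrence $D_{n+1} = D_n + D_{n-1} + D_{n-2}$. This is pure linearity: expanding the right-hand side gives
\[
(T_{n+1}+T_n+T_{n-1}) - \alpha(T_n+T_{n-1}+T_{n-2}) = T_{n+2} - \alpha T_{n+1} = D_{n+1},
\]
using the Tribonacci recurrence for $T$ twice. Second, Lemma~\ref{lemma:alphadiff} gives $|D_n| \leq 2\alpha^{-n/2}/|\beta-\gamma| \to 0$ as $n \to \infty$, since $\alpha > 1$.

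With these two facts, the contradiction is short. Suppose $D_n, D_{n+1}, D_{n+2}$ all have the same sign. Replacing the entire sequence by $-D_m$ if necessary (still a Tribonacci-like sequence, still tending to $0$), I may assume $D_n, D_{n+1}, D_{n+2} > 0$. By induction on the recurrence, $D_m > 0$ for every $m \geq n$. Moreover, for any $m \geq n+2$,
\[
D_{m+1} = D_m + D_{m-1} + D_{m-2} \geq D_m,
\]
so the tail $(D_m)_{m \geq n+2}$ is nondecreasing, hence bounded below by $D_{n+2} > 0$. This contradicts $D_n \to 0$.

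The only nonobvious ingredient is the recognition that $D_n$ inherits the Tribonacci recurrence; once that is noted, decay from the lemma collides with the monotone growth forced by three positive consecutive terms, and the rest is routine. So I do not anticipate a genuine obstacle beyond spotting that reduction.
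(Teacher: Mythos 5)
Your proof is correct, but it takes a genuinely different route from the paper's. The paper works from the closed form: it writes $T_{n+1}-\alpha T_n = 2\Re\bigl(\beta^n/(\beta-\gamma)\bigr)$ and argues that since the argument $\psi$ of the complex root $\beta$ satisfies $\pi/2 < |\psi| < \pi$, three consecutive powers $\beta^n,\beta^{n+1},\beta^{n+2}$ cannot all lie in the same half-plane, so the real parts cannot share a sign. You instead observe that $D_n = T_{n+1}-\alpha T_n$ is itself a Tribonacci-like sequence, combine this with the decay $|D_n|\to 0$ from Lemma~\ref{lemma:alphadiff}, and derive a contradiction because three consecutive positive terms force the tail to be eventually positive and nondecreasing, hence bounded away from $0$. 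Both ingredients you use are sound (the recurrence check is exactly right, and the monotonicity step only needs $m\ge n+2$ so that $D_{m-1},D_{m-2}>0$). Your argument is the more elementary and more robust one --- it never touches the complex roots beyond the decay bound, and it is essentially the same mechanism the paper itself deploys later in Lemma~\ref{lemma:negativeextraTribs} when extending extraTribs to the left. What the paper's trigonometric approach buys in exchange is quantitative control of the actual sign pattern (the rotation by $\psi$ per step), which underlies the subsequent description of the sequences $P(n)$ and $Q(n)$ and the records in Tables~\ref{table:positivediff} and~\ref{table:negativediff}; your argument establishes the stated proposition but yields no information about where the sign changes occur.
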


\begin{proof}
We know that $T_{i+1}-\alpha T_{i}=2\Re(\frac{\beta^i}{\beta-\gamma})$, where $\Re$ is the real part of a complex number. Now $2\Re(\frac{\beta^a}{\beta-\gamma})=\frac{2| \beta^a|}{\beta-\gamma}\sin(a\psi)$, where $\psi$ is the polar angle coordinate of $\beta$. We can calculate $\psi=\pm 2.17623$ radians, so $| 2\psi|>\pi$. This means there are not three consecutive integers where $T_{i+1}-\alpha T_i$ has the same sign.
\end{proof}

It follows that $P(n) - P(n-1) < 3$, and the sequence $P(n)$ does not contain three consecutive numbers. An analogous statement is true for $Q(n)$.

We now want to find numbers that create new records in the differences.

\begin{proposition}
Numbers that create new records in positive differences have the following properties.
\begin{enumerate}
\item If $k$ is a positive record, then all the indices in its Tribonacci representation belong to the sequence $P(n)$.
\item For any $k$, the number $T_{P(2)} + T_{P(3)} + \cdots + T_{P(k-1)} + T_{P(k)}$, creates a new record for positive differences.
\item For every $t > 1$, there exists $N_0$ such that the $N$th positive (negative) record for any $N > N_0$ contains $P(t)$.
\end{enumerate}
Similar statements are true for negative differences.
\end{proposition}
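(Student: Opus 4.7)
The plan is to treat each item via the additive decomposition $E(n) := \out(n) - \alpha n = \sum_{i \in A(n)} d_i$, where $A(n)$ is the index set of the Tribonacci representation of $n$ and $d_i := T_{i+1} - \alpha T_i$, so that $d_i > 0$ for $i \in P$ and $d_i < 0$ for $i \in Q$. For item (1), suppose $k$ is a positive record and that some $i \in A(k)$ lies in $Q$; then $k' := k - T_i$ is a smaller positive integer whose Tribonacci representation has index set $A(k) \setminus \{i\}$, still valid because deleting a digit cannot create three consecutive ones, and $E(k') = E(k) - d_i > E(k)$ since $d_i < 0$, contradicting the record property. Hence $A(k) \subseteq P$.

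For item (2), write $N_k := \sum_{j=2}^{k} T_{P(j)}$ and $V_k := E(N_k) = \sum_{j=2}^{k} d_{P(j)}$, and consider $1 \le n < N_k$. The deletion argument from item (1), applied to $n$, only decreases $n$ and increases $E(n)$, so I may assume $A(n) \subseteq P$ and write $A(n) = \{P(j) : j \in S\}$ with $m := \max S \ge 2$. If $m \le k$, then $E(n) = \sum_{j \in S} d_{P(j)} \le V_k$ with equality only for $S = \{2, \ldots, k\}$, i.e.\ $n = N_k$, giving the required strict inequality. The case $m \ge k+1$ is ruled out by the key lemma $T_{P(k)+1} > N_k$, since it forces $n \ge T_{P(m)} \ge T_{P(k+1)} \ge T_{P(k)+1} > N_k$, contradicting $n < N_k$. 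I expect this lemma to be the main obstacle, and would prove it by strong induction.

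For the lemma, the base case $T_{P(2)+1} = T_4 = 2 > 1 = N_2$ is immediate. Using $T_{P(k)+1} = T_{P(k)} + T_{P(k)-1} + T_{P(k)-2}$ and $N_k = N_{k-1} + T_{P(k)}$, it suffices to prove $T_{P(k)-1} + T_{P(k)-2} > N_{k-1}$. Split on the gap $g := P(k) - P(k-1) \in \{1, 2, 3\}$. When $g \ge 2$, a direct expansion gives $T_{P(k)-1} + T_{P(k)-2} \ge T_{P(k-1)+1}$, and the inductive hypothesis yields $T_{P(k-1)+1} > N_{k-1}$. The delicate case is $g = 1$, in which $T_{P(k)-1} + T_{P(k)-2} = T_{P(k-1)} + T_{P(k-1)-1}$. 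Here the no-three-consecutive property of $P$ forces $P(k-1) - 1 \in Q$, so $P(k-2) \le P(k-1) - 2$, which gives $T_{P(k-1)-1} \ge T_{P(k-2)+1} > N_{k-2}$ by the inductive hypothesis at index $k-2$. Substituting, $T_{P(k-1)} + T_{P(k-1)-1} > T_{P(k-1)} + N_{k-2} = N_{k-1}$, completing the induction.

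For item (3), set $V_\infty := \sum_{j \ge 2} d_{P(j)}$; this series converges by Lemma~\ref{lemma:alphadiff}. Item (2) exhibits records $N_2 < N_3 < \cdots$ with $E(N_k) = V_k \to V_\infty$, so the $E$-values of successive positive records tend to $V_\infty$ from below. Fix $t > 1$; by item (1), any positive record $r$ whose representation omits $P(t)$ satisfies $E(r) \le \sum_{j \ge 2,\, j \ne t} d_{P(j)} = V_\infty - d_{P(t)}$. Since $d_{P(t)} > 0$, choose $N_0$ so that the $N_0$th positive record already exceeds $V_\infty - d_{P(t)}$; every later record has strictly larger $E$-value and must therefore contain $P(t)$. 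The statements for negative differences follow by the symmetric argument with $P$ replaced by $Q$ (starting from $Q(2)$, since $Q(1) = 2$ lies below the smallest admissible Tribonacci index) and inequalities reversed.
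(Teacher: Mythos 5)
Your argument follows the same route as the paper's on all three items: the deletion argument for (1), a term-by-term comparison of an arbitrary smaller number against $N_k = T_{P(2)}+\cdots+T_{P(k)}$ for (2), and bounding the tail $\sum_{j>M} d_{P(j)}$ against the fixed positive quantity $d_{P(t)}$ for (3). The one genuine addition is that you isolate and prove the inequality $T_{P(k)+1} > N_k$. The paper uses this silently in item (2) when it asserts that any $m < N_k$ has a representation obtained from that of $N_k$ by deleting $P$-indices and inserting $Q$-indices; without the inequality, $m$ could a priori contain an index exceeding $P(k)$ and the comparison would break. Making this explicit is a real improvement in rigor, but your three-case induction is more work than necessary: since $P$ contains no three consecutive integers, $T_{P(2)}+\cdots+T_{P(k)}$ is already the canonical Tribonacci representation of $N_k$, and the greedy-algorithm argument behind Fact 2T gives directly that every integer is smaller than the Tribonacci number indexed one above the top index of its representation, i.e., $N_k < T_{P(k)+1}$. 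Two minor boundary points: your $g=1$ case invokes the inductive hypothesis at $k-2$, which is undefined when $k=3$ (and indeed $P(3)-P(2)=1$), so that instance must be checked by hand; and the statement of the lemma should be anchored at $k=2$ as you do. Neither affects correctness, and your treatment of the negative case (starting at $Q(2)$ because $Q(1)=2$ is below the admissible index range) is a detail the paper omits.
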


\begin{proof}
Consider the Tribonacci representation of $n$. Suppose $n$ achieves a new positive record, and suppose its Tribonacci representation contains $T_j$ such that $\out(T_j) - \alpha T_j < 0$; that is, $j\neq P(t)$ for any $t > 1$. Then consider the number $n - T_j$. The corresponding difference $\out(n-T_j) - \alpha(n - T_j)$ equals $\out(n) - \out(T_j) - \alpha n + \alpha T_j > \out(n) - \alpha n$. Thus, the difference for a smaller number $n-T_j$ exceeds the difference for a larger number $n$, which means $n$ cannot be a record. Hence, the Tribonacci representations of records must only have terms of the form $T_{P(t)}$ for some $t > 1$.

Moreover, suppose we have a number $n$ with Tribonacci representation $T_{P(2)} + T_{P(3)} + \cdots + T_{P(k-1)} + T_{P(k)}$ for some $k$. Consider a number $m < n$. Its Tribonacci representation can be built by removing some of the terms $T_{P(j)}$ for $j \leq k$ and adding terms of the form $T_{Q(i)}$. That is, we remove terms contributing a positive difference and add terms contributing a negative difference. Then $\out(n) - \alpha n > \out(m) - \alpha m$. So $n$ is a record.

Finally, we show that for $t>1$, any term $T_{P(t)}$ is contained in the Tribonacci representation of every sufficiently large record. First, as we just showed, for every $M$, the number $N_0 =T_{P(2)} + T_{P(3)} + \cdots + T_{P(M-1)} + T_{P(M)}$ is a record. Suppose $N > N_0$ does not contain $T_{P(t)}$ in its Tribonacci representation. Then $(\out(N_0) - \alpha N_0) - (\out(N) - \alpha N) \ge T_{P(t)+1} - \alpha T_{P(t)} - (T_{P(a_0)+1} - \alpha T_{P(a_0)} + T_{P(a_1)+1} - \alpha T_{P(a_1)} + \cdots)$, where the $a_i$ are all greater than $M$. But from Lemma~\ref{lemma:alphadiff}, we know that $T_{P(a_i)+1} - \alpha T_{P(a_i)} \le 2\frac{|\beta|^{P (a_i)}}{|\beta-\gamma|}$. Thus, $T_{P(a_0)+1} - \alpha T_{P(a_0)} + T_{P(a_1)+1} - \alpha T_{P(a_1)} + \cdots$ is bounded by $\frac{2}{|\beta-\gamma|}(|\beta|^{M+1} + |\beta|^{M+2} + \cdots) = \frac{2|\beta|^{M+1}}{|\beta-\gamma|(1-|\beta|)}$, which goes to 0 as $M$ goes to infinity. We know that $T_{P(t)+1} -\alpha T_{P(t)}$ is positive, so picking a sufficiently large $M$, we can ensure that $(T_{P(a_0)+1} - \alpha T_{P(a_0)} + T_{P(a_1)+1} - \alpha T_{P(a_1)} + \cdots)$ is smaller than $T_{P(t)+1} - \alpha T_{P(t)}$. Thus, $(\out(N_0) - \alpha N_0) - (\out(N) - \alpha N)$ is positive, and $N$ is not a record.

The argument for negative records is similar.
\end{proof}

Table~\ref{table:positivediff} displays the numbers $n$ that set a new record for a positive difference $T_{n+1}-\alpha n$. The first column shows $n$, the second column the approximate value of $|\alpha n-\out(n)|$, and the last column shows indices of Tribonacci numbers in the Tribonacci representation of $n$. The proposition above tells us that the numbers in the right column belong to sequence $P(k)$. Moreover, in the limit, the numbers approach the sequence $P(k)$ in reverse order. The data for negative records is in Table~\ref{table:negativediff}.

\begin{table}[ht!]
\begin{subtable}[c]{0.5\textwidth}
\centering
\begin{tabular}{|c|c|r|}   
1 	&0.1607132	&[3]\\
2 	&0.3214264	&[4]\\
3 	&0.4821397	&[4, 3]\\
10 	&0.6071324	&[6, 4, 3]\\
23 	&0.6964046	&[7, 6, 4, 3]\\
67 	&0.7677874	&[9, 7, 6, 4, 3]\\
148 	&0.7855602	&[10, 9, 7, 6, 4, 3]\\
341 	&0.8032164	&[12, 9, 7, 6, 4, 3]\\
422 	&0.8209892	&[12, 10, 9, 7, 6, 4, 3]\\
\end{tabular}
\subcaption{Records for positive difference.}
\label{table:positivediff}
\end{subtable}
\begin{subtable}[c]{0.5\textwidth}
\centering
\begin{tabular}{|c|c|r|}   
4 	&$-0.3571470$&[5]\\
28 	&$-0.5000291$&[8, 5]\\
177 	&$-0.5537556$&[11, 8, 5]\\
681 	&$-0.5542803$&[13, 11, 8, 5]\\
1104 	&$-0.5725777$&[14, 11, 8, 5]\\
1608 	&$-0.5731023$&[14, 13, 11, 8, 5]\\
4240 	&$-0.5758421$&[16, 14, 11, 8, 5]\\
4744 	&$-0.5763667$&[16, 14, 13, 11, 8, 5]\\
6872 	&$-0.5785818$&[17, 14, 11, 8, 5]\\
\end{tabular}
\subcaption{Records for negative difference.}
\label{table:negativediff}
\end{subtable}
\caption{Records for positive and negative difference.}
\end{table}

\section{Trithoff array}\label{sec:trithoffarray}

We want to build the analog of the Wythoff array for Tribonacci numbers. We call it the \textit{Trithoff} array. Our first column are numbers in increasing order whose Tribonacci representation ends in 1. For a number $n$ in the array, the number to the right is $\out(n)$. Thus, the second column are numbers whose Tribonacci representation ends in 10, and so on. Table~\ref{table:trithoffarray} shows the Trithoff array, where the last column is the sequence numbers in the OEIS \cite{OEIS} for corresponding rows. The array itself is in OEIS by antidiagonals as sequence A136175, and it is called the Tribonacci array. By analogy with Conway and Ryba \cite{ConwayRyba2016}, we will later extend the Trithoff array while we call this part the Garden State.

\begin{table}[ht!]
\begin{center}
\begin{tabular}{ccccccc|r}
 1 &  2 &  4 &  7 &  13 &  24 & \ldots & A000073\\
 3 &  6 & 11 & 20 &  37 &  68 & \ldots & A001590\\
 5 &  9 & 17 & 31 &  57 & 105 & \ldots & A000213\\
 8 & 15 & 28 & 51 &  94 & 173 & \ldots & A214899\\
10 & 19 & 35 & 64 & 118 & 217 & \ldots & A020992\\
12 & 22 & 41 & 75 & 138 & 254 & \ldots & A100683\\
\ldots & \ldots & \ldots & \ldots & \ldots & \ldots & \ldots &
\end{tabular}
\end{center}
\caption{The Trithoff array.}
\label{table:trithoffarray}
\end{table}

Here are some properties of the Trithoff array.

In the Trithoff array, the numbers in columns with index equaling 1, 2, or 3 mod 3, when sorted, form the positions of letters $a$, $b$, and $c$ in the Tribonacci word, respectively.

We denote the element in row $i$ and column $j$ as $T_{i,j}$. In particular, the first row is a shifted Tribonacci sequence: $T_{1,k} = T_{k+3}$.

Similar to extraFibs, we call integer sequences that follow the Tribonacci rule and end in positive integers \textit{extraTrib series} or \textit{extraTribs}. The sequences that just follow the Tribonacci rule we call \textit{Tribonacci-like} sequences.

It is well-known that any extraTrib can be expressed through Tribonacci numbers. For example, consider an extraTrib sequence $S_n$, where $S_0 = x$, $S_1 = y$, and $S_2 = z$. Then,
\[S_n = xT_{n-1} + y (T_{n+1} - T_n) + z T_n.\]

This allows one to derive many formulae connecting the values of different rows. For example

\begin{itemize}
\item $T_{2,n} = T_{1,n}+T_{1,n+1} = T_{1,n+3} - T_{1,n+2} = T_{1,n+2} - T_{1,n-1}$,
\item $T_{3,n} = T_{2,n-1}+T_{2,n} = T_{2,n+1} - T_{2,n-2} = T_{1,n} + T_{1,n+2} = T_{n-1} + 2T_n + T_{n+1}$.
\end{itemize}

Column 1 is sequence A003265:
\[1,\ 3,\ 5,\ 8,\ 10,\ 12,\ 14,\ 16,\ 18,\ 21,\ 23,\ 25,\ 27,\ 29,\ 32,\ 34,\ 36,\ 38,\ 40,\ \ldots.\]
We can estimate the growth rate of this sequence. Each next term is either 2 or 3 more than the previous term.
\begin{proposition}
The ratio of a number to its index in sequence A003265 above approaches 
\[\frac{\alpha^2+1}{2} = \frac{\alpha}{\alpha-1}\]
as the index approaches infinity.
\end{proposition}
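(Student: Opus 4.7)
The plan is to compute the natural density of the first column in $\mathbb{N}$ and then invert. Writing $\pi(N)$ for the number of elements of column $1$ that are at most $N$, I have $\pi(a_n) = n$, where $a_n$ is the $n$th term of the sequence, so it suffices to prove $\pi(N) = \frac{\alpha-1}{\alpha} N + O(1)$; this immediately gives $a_n/n \to \alpha/(\alpha-1)$. The identity $(\alpha^2+1)/2 = \alpha/(\alpha-1)$ is a one-line consequence of the characteristic equation $\alpha^3 = \alpha^2 + \alpha + 1$, so this limit is exactly the claim.

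The structural input is the disjoint decomposition $\mathbb{N} = (\text{column } 1) \sqcup \out(\mathbb{N})$. Indeed, by definition column $1$ consists of the positive integers whose Tribonacci representation ends in $1$, while $\out$, which shifts every index in a representation up by $1$, has image exactly the positive integers whose representation ends in $0$. Uniqueness of the Tribonacci representation (Fact 2T) both makes these two sets partition $\mathbb{N}$ and makes $\out$ injective, so
\[\pi(N) \;=\; N - \bigl|\out(\mathbb{N}) \cap [1, N]\bigr| \;=\; N - \bigl|\{m \in \mathbb{N} : \out(m) \leq N\}\bigr|.\]

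For the final count I apply Fact 3T, namely $\alpha m - 0.85 < \out(m) < \alpha m + 0.85$: any $m \leq (N-0.85)/\alpha$ satisfies $\out(m) \leq N$, while $\out(m) \leq N$ forces $m < (N+0.85)/\alpha$. Hence $|\{m : \out(m) \leq N\}| = N/\alpha + O(1)$, so $\pi(N) = (\alpha-1)N/\alpha + O(1)$, as required. The only delicate point is verifying that $\out(\mathbb{N})$ really is exactly the set of positive integers whose Tribonacci representation ends in $0$: one must check that subtracting $1$ from every index in such a representation yields another legal Tribonacci representation. This is true because the smallest index used is at least $4$ (so the shifted indices are all $\geq 3$, which is the smallest Tribonacci index we use in representations), and the no-three-consecutive-ones property is invariant under a uniform shift of indices.
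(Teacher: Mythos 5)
Your proof is correct, but it takes a genuinely different route from the paper's. The paper counts the elements of column $1$ below a bound $b$ directly: it classifies the Tribonacci representations ending in $1$ as those of the form $w011$ or $w01$, realizes each as (roughly) $\out^3(a)$ or $\out^2(a)$ for the number $a$ with representation $w$, and uses $\out(n)\approx\alpha n$ to conclude the count is about $b/\alpha^3+b/\alpha^2$, whence the ratio $\alpha^3/(\alpha+1)=\alpha/(\alpha-1)$. You instead count the \emph{complement}: the positive integers not in column $1$ are exactly those whose representation ends in $0$, i.e., exactly the image of the injective map $\out$, and a single application of the explicit bounds of Fact 3T gives $|\{m:\out(m)\le N\}|=N/\alpha+O(1)$, hence $\pi(N)=(1-1/\alpha)N+O(1)$. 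The two densities agree since $1-1/\alpha=(\alpha+1)/\alpha^3$ by the characteristic equation. Your version buys two things: it avoids iterating the approximation $\out(n)\approx\alpha n$ two and three times (the paper's argument is genuinely heuristic at that point, with uncontrolled ``approximately'' signs), and it delivers an explicit $O(1)$ error term for $\pi(N)$, which makes the passage to the limit $a_n/n\to\alpha/(\alpha-1)$ airtight. The one point you rightly flag --- that $\out(\mathbb{N})$ is precisely the set of integers whose representation ends in $0$ --- is handled correctly: shifting all indices down by one preserves the no-three-consecutive condition and keeps indices in the legal range.
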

This is correlated to the frequency of numbers ending in 1 in Tribonacci representation.

\begin{proof}
We want to see how many numbers that are less than $b$ end with a 1 in their Tribonacci representation. These numbers are either in the form of $w011$ or $w01$. Suppose $a$ is the number with Tribonacci representation $w$. Then $(\out(\out(\out(a))) + 3)_T = w011$ and $(\out(\out(a)) + 1)_T = w01$. Recall $\out(n) \approx \alpha n$. For the number with Tribonacci representation $w011$ to be less than $b$, we need $\alpha^3 a < b$ approximately. Thus the number of such $a$'s is about $\frac{b}{\alpha^3}$. Similarly, by the same logic for the second case, we need $\alpha^2 a < b$ approximately, and the number of such numbers is approximately $\frac{b}{\alpha^2}$. Thus, the number of numbers less than $b$ with Tribonacci representation ending in 1 is approximately $\frac{b}{\alpha^3} + \frac{b}{\alpha^2}$.

Suppose A003265$(r) = n$, or equivalently, the value in column 1 and row $r$ of the Trithoff array is $n$. This means that there are $r-1$ numbers less than $n$ with Tribonacci representations ending in 1. Using what we previously calculated, $r \approx \frac{n}{\alpha^3} + \frac{n}{\alpha^2}$. Isolating $n$, we get that $n \approx \frac{\alpha^3}{\alpha+1}r= \frac{\alpha}{\alpha-1}r$, which proves the proposition.
\end{proof}

\begin{corollary}
\label{cor:Trithoffformula}
A number in column $c$ and row $r$ of the Trithoff array can be estimated as 
\[\frac{r\alpha^c}{\alpha - 1}.\]
\end{corollary}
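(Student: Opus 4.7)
The plan is to reduce the corollary to the immediately preceding proposition by iterating the $\out$ operation $c-1$ times, and controlling the compounded error using Fact 3T.

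First I would unpack the setup: by definition of the Trithoff array, the entry in row $r$ and column $c$ satisfies $T_{r,c} = \out^{c-1}(T_{r,1})$, where $\out^{k}$ denotes the $k$-fold iterate. The preceding proposition tells us $T_{r,1} = \frac{\alpha}{\alpha-1}r + o(r)$ as $r\to\infty$. So the entire task is to show that applying $\out$ a bounded number of times to $T_{r,1}$ multiplies it by $\alpha^{c-1}$ up to lower-order terms, yielding
\[
T_{r,c} = \alpha^{c-1}\cdot\frac{\alpha}{\alpha-1}r + o(r) = \frac{r\alpha^{c}}{\alpha-1} + o(r).
\]

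Next I would invoke Fact 3T, which guarantees $|\out(n)-\alpha n|<0.85$ for every $n$. Iterating this bound gives, by a routine induction on $k$, that
\[
|\out^{k}(n) - \alpha^{k} n| \;<\; 0.85\bigl(1+\alpha+\alpha^{2}+\cdots+\alpha^{k-1}\bigr)\;=\;0.85\cdot\frac{\alpha^{k}-1}{\alpha-1}.
\]
For the corollary, $c$ is fixed (so $k=c-1$ is fixed) and the right-hand side is an absolute constant $C_{c}$ independent of $r$. Combining this with the proposition's asymptotic for $T_{r,1}$ yields
\[
\left| T_{r,c} - \frac{r\alpha^{c}}{\alpha-1}\right|\; \le \;\alpha^{c-1}\bigl|T_{r,1}-\tfrac{\alpha}{\alpha-1}r\bigr|+C_{c},
\]
and the right side is $o(r)$, establishing the estimate.

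The only real obstacle is a bookkeeping one: stating what ``estimated as'' means precisely. Since the preceding proposition is phrased asymptotically (a limit of ratios), the cleanest interpretation is $T_{r,c}/r \to \alpha^{c}/(\alpha-1)$ as $r\to\infty$ with $c$ fixed, and that follows immediately from the inductive bound above. If instead one wants a uniform additive error in both $r$ and $c$, one would have to track the geometric factor $\alpha^{c-1}$ in the error, which grows with $c$; this is not an obstruction to the statement as written, but would rule out a uniform interpretation. I would state the corollary in the asymptotic-ratio sense to keep the proof short and avoid that subtlety.
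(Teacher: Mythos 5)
Your proposal is correct and follows exactly the route the paper intends: the paper states this corollary without an explicit proof, leaving it to follow from the preceding proposition (column $1$ is approximately $\frac{\alpha}{\alpha-1}r$) together with the fact that each step to the right applies $\out$, which multiplies by $\alpha$ up to a bounded additive error (Fact 3T). Your iterated error bound $|\out^{k}(n)-\alpha^{k}n| < 0.85\cdot\frac{\alpha^{k}-1}{\alpha-1}$ and your remark that the estimate should be read asymptotically in $r$ for fixed $c$ simply make explicit what the paper leaves implicit.
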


Column 2 is now sequence A353083:
\[2,\ 6,\ 9,\ 15,\ 19,\ 22,\ 26,\ 30,\ 33,\ 39,\ 43,\ 46,\ 50,\ 53,\ 59,\ 63,\ 66,\ 70,\ \ldots.\]

Given a sequence $S_i$, the \textit{difference sequence} $D_i$ is defined as $D_i = S_{i+1} - S_i$. We can see that the difference sequence of an extraTrib is an extraTrib.

Suppose a row starts in $[a1]_T$. What is the row number?

Let us first introduce a new base $U$, based on the second row of the array. Let us denote $U_i = T_{2,i-2} = T_{i+1} - T_i$. Suppose we consider a Tribonacci-like representation system based on the second row of the array. Then using this system, $[\ldots a_4a_3a_2a_1a_0]_U$ is equal to $\cdots + a_4U_4 + a_3U_3 + a_2U_2 + a_1U_1 + a_0U_0 = \cdots + 6a_4 + 3a_3 + 2a_2 + a_1 + 0a_0$. Such representation is not unique, as $a_0$ doesn't affect the value. Also, $[1000]_U = [110]_U = 3$. However, the point of this system is provided by the following lemma.

\begin{lemma}
\label{lemma:rownumber}
If the row in Trithoff array starts with $[a1]_T$, then the row number is $[a1]_T-[a]_T = 1 + [a1]_U$.
\end{lemma}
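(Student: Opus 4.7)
The plan is to split the lemma into two assertions: first, that the row number of $[a1]_T$ in column~$1$ of the Trithoff array equals $[a1]_T - [a]_T$; second, the purely algebraic identity $[a1]_T - [a]_T = 1 + [a1]_U$.

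For the first part, set $N = [a1]_T$. The row number of $N$ in column~$1$ is the number of positive integers $m \le N$ whose Tribonacci representation ends in $1$, so equivalently it is $N$ minus the count of $m \in \{1, \ldots, N\}$ whose representation ends in $0$. By the definition of $\out$, appending a $0$ to the Tribonacci representation of $k \ge 1$ produces the representation of $\out(k)$, so the positive integers with representation ending in $0$ are exactly $\{\out(k) : k \ge 1\}$. The key subsidiary fact I need is that $\out$ is strictly monotone on positive integers, which follows from Fact~3T: one has $\out(k+1) - \out(k) > \alpha - 2(0.85) > 0$, and the left side is an integer. Because $N = \out([a]_T) + 1$ has representation ending in $1$ while every $\out(k)$ with $k \ge 1$ ends in $0$, the inequality $\out(k) \le N$ is equivalent to $\out(k) \le \out([a]_T)$, which by monotonicity is $k \le [a]_T$. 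Hence the count of $m \le N$ ending in $0$ is exactly $[a]_T$, and the row number is $N - [a]_T$.

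For the second part, write $a = a_n a_{n-1} \cdots a_0$ as a string of Tribonacci digits. Then $[a]_T = \sum_{j=0}^{n} a_j T_{j+3}$ and $[a1]_T = T_3 + \sum_{j=0}^{n} a_j T_{j+4}$, so
\[
[a1]_T - [a]_T = 1 + \sum_{j=0}^{n} a_j (T_{j+4} - T_{j+3}).
\]
On the other hand, $[a1]_U = 1 \cdot U_0 + \sum_{j=0}^{n} a_j U_{j+1} = \sum_{j=0}^{n} a_j U_{j+1}$ since $U_0 = 0$. A quick induction on the Tribonacci recurrence identifies $U_{j+1}$ with $T_{j+4} - T_{j+3}$: both sides agree at the initial indices and satisfy the same three-term linear recurrence, since $U$ is (row~$2$ of the Trithoff array extended backward). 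Substituting term by term makes the two displays coincide, yielding $[a1]_T - [a]_T = 1 + [a1]_U$.

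The only substantive step is the strict monotonicity of $\out$ on positive integers, and that drops out cleanly from the quantitative bound in Fact~3T; the rest is bookkeeping about Tribonacci digit strings. Combining the two parts gives the lemma.
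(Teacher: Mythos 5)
Your proof is correct and follows essentially the same route as the paper: count the integers up to $[a1]_T$ whose Tribonacci representation ends in $0$ (there are exactly $[a]_T$ of them, namely the images of $1,\dots,[a]_T$ under $\out$), subtract from $[a1]_T$, and then identify $\sum_j a_j(T_{j+4}-T_{j+3})$ with $[a1]_U$. The only difference is that you make the strict monotonicity of $\out$ explicit via Fact 3T, a step the paper leaves implicit.
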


\begin{proof}
The row number is the number of rows from the first to that row. Each row starts with a number whose Tribonacci representation ends in 1. There are $[a1]_T$ total numbers not exceeding $[a1]_T$. Out of those, the numbers of the form $[a0]_T$ correspond to numbers ending in zero, and there are exactly $[a]_T$ of them. Therefore, the total number of numbers not exceeding $[a1]_T$ and with Tribonacci representation ending in 1 is $[a1]_T-[a]_T = 1 + [a0]_T-[a]_T$. Using the fact that $U_i = T_{i+1} - T_i$, we get $[a1]_T-[a]_T  = 1 + [a0]_U = 1 + [a1]_U $.
\end{proof}

\subsection{Rows of the Trithoff array and their difference sequences}
\label{subsec:rowdiffsequences}

The difference sequence of a Fibonacci-like sequence is again the same sequence with the index shifted by 1. In the Tribonacci case, the situation is way more interesting.

The difference sequence of the first row of the Trithoff array is the second row. The difference sequence of the second row is the third row. The difference sequence of the third row is the seventh row. When we continue, we get the following sequence $a(n)$, which is now sequence A354215. In this sequence $a(n+1)$ is the row in the Trithoff array corresponding to the difference sequence of row $a(n)$.
\[1,\ 2,\ 3,\ 7,\ 19,\ 29,\ 81,\ 125,\ 353,\ 161,\ 1545,\ 705,\ 2001,\ \ldots ,\]

The following proposition defines the difference sequence in terms of Tribonacci representation of the given sequence.

\begin{proposition}
The difference sequence of a row containing $a_T$ contains $|11_T \cdot a_T|$, where the multiplication is done in any integer base larger than 2.
\end{proposition}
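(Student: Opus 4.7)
The plan is to identify $|11_T \cdot a_T|$ as the concrete difference $\out^3(a) - \out^2(a)$. Both sides will be written as sums weighted by the Tribonacci digits of $a$, and the agreement will follow from one application of the Tribonacci recurrence.

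First I would fix the convention that position $i$ (from the right) in a Tribonacci string weighs $T_{i+3}$, so that $\out$ is realized by appending a $0$. If $w = w_{n-1}\cdots w_0$ is the Tribonacci representation of $a$, then the row containing $a$ extends to the right as $\out^k(a) = \sum_i w_i T_{i+3+k}$, and therefore
\[
\out^{k+1}(a) - \out^k(a) \;=\; \sum_i w_i\bigl(T_{i+4+k} - T_{i+3+k}\bigr) \;=\; \sum_i w_i\bigl(T_{i+2+k} + T_{i+1+k}\bigr),
\]
using $T_m - T_{m-1} = T_{m-2} + T_{m-3}$. The choice $k=2$ gives $\sum_i w_i(T_{i+3}+T_{i+4})$.

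Next I would expand the left-hand side. In any integer base $b \ge 3$, multiplying $11$ by $w$ introduces no carries, because each coefficient of the polynomial product is $w_{i-1} + w_i \le 2 < b$. Therefore the product is the base-independent digit string with coefficient $w_{i-1} + w_i$ in position $i$ (with the conventions $w_{-1} = w_n = 0$), and reading it in Tribonacci base yields $\sum_i (w_{i-1} + w_i)\,T_{i+3} = \sum_i w_i(T_{i+3} + T_{i+4})$ after a single reindexing. This matches $\out^3(a) - \out^2(a)$, which is the difference between two consecutive elements of the row containing $a$ and therefore belongs to the difference sequence.

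The only real obstacle I anticipate is notational: pinning down the position-to-Tribonacci-index correspondence and carefully handling the extra leading and trailing zero digits introduced by the product $11 \cdot w$ so that the two sums line up cleanly. Once the bookkeeping is right, the mathematical content reduces to a one-line application of the Tribonacci recurrence.
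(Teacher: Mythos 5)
Your proposal is correct and follows essentially the same route as the paper: both rest on the identity $T_{m}-T_{m-1}=T_{m-2}+T_{m-3}$ (i.e., the difference of consecutive Tribonacci numbers has representation $110^{m-3}$) together with digit-wise linearity, which is exactly the carry-free multiplication by $11$ in a base larger than $2$. You merely make the paper's appeal to ``linearity'' explicit by writing out the weighted sums and pinning the value down as $\out^3(a)-\out^2(a)$, which is a harmless elaboration rather than a different argument.
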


\begin{proof}
We consider the difference sequences in terms of Tribonacci representation. First consider two large Tribonacci numbers: $(T_n)_T = 10^{n-3}$ and $(T_{n+1})_T = 10^{n-2}$. Then their difference is $T_{n+1} - T_n = T_{n-2} + T_{n-1}$. Thus, the Tribonacci representation is $(T_{n+1} - T_n)_T = (T_{n-2} + T_{n-1})_T = 110^{n-3}$. By linearity, if we take the difference sequence of a row containing $a_T$, the result contains $|11_T \cdot a_T|$, where the multiplication is done in any integer base larger than 2.
\end{proof}

For example, from row 1 containing 1, we get Tribonacci representation $11 \cot 1 =  11$, which is in row 2. Then multiplying 11 by 11, we get 121, which is evaluated to 9, which corresponds to row 3. Row 3 starts with 5, with Tribonacci representation 101. Repeating again, we multiply 101 from row 3 by 11 to get 1111 which evaluates to 14, corresponding to row 7.

Given a sequence $S_i$, the \textit{partial-sums sequence} $P_i$ is sequence defined as $P_i = \sum_{k=0}^iS_k$.

If a sequence $S_i$ has the difference sequence $D_i$, we call the sequence $S_i$ the \textit{difference-inverse} of $D_i$. It is well-known that any difference-inverse sequence equals a partial-sums sequence plus a constant.

Before proving our result about difference-inverses of extraTribs, we want to describe possible parities of Tribonacci-like integer sequences. One can check that there are four possible cases:

\begin{itemize}
\item Type (EEEE): All terms are even (for example, row 7);
\item Type (OOOO): All terms are odd (for example, row 3);
\item Type (EOEO): The terms alternate (for example, row 2);
\item Type (EEOO): The terms form a pattern of two even, then two odd (for example, row 1);
\end{itemize}

\begin{theorem}
An extraTrib always has a unique Tribonacci-like difference-inverse. The inverse is an extraTrib if and only if the given sequence does not belong to Type EEOO.
\end{theorem}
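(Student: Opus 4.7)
The plan is to derive an explicit formula for the unique Tribonacci-like difference-inverse of an extraTrib and then reduce the question of when this difference-inverse is itself an extraTrib to a parity check.

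Let $E = (E_i)$ be an extraTrib. Any difference-inverse $P$ of $E$ satisfies $P_i = P_0 + E_0 + E_1 + \cdots + E_{i-1}$, so $P$ is determined by the single number $P_0$, and I would ask which choice of $P_0$ makes $P$ Tribonacci-like. Setting $R_i = P_{i+3} - P_{i+2} - P_{i+1} - P_i$, a short computation gives $R_{i+1} - R_i = E_{i+3} - E_{i+2} - E_{i+1} - E_i = 0$ since $E$ is Tribonacci-like. Hence $R_i$ is constant in $i$, so $P$ is Tribonacci-like iff $R_0 = 0$. Expanding $R_0 = 0$ in terms of $P_0, E_0, E_1, E_2$ yields the single linear equation $E_2 - E_0 = 2P_0$, forcing $P_0 = (E_2 - E_0)/2$. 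Equivalently, the closed form $P_i = (E_{i+2} - E_i)/2$ is the unique solution, as one can verify directly using $E_{i+3} - E_{i+2} = E_{i+1} + E_i$ to telescope $P_{i+1} - P_i$ back to $E_i$. This establishes existence and uniqueness of a Tribonacci-like difference-inverse (a priori only over the rationals).

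To decide when this $P$ is actually an extraTrib, note that eventual positivity is automatic: since $E$ is eventually positive, the Tribonacci rule forces $E_{i+2} = E_{i+1} + E_i + E_{i-1} > E_i$ for large $i$, so $P_i > 0$ eventually. The remaining requirement is that every $P_i$ be an integer, equivalently that $E_{i+2} \equiv E_i \pmod{2}$ for all $i$. Reducing the Tribonacci rule modulo $2$ shows the parity sequence of $E$ has period dividing $4$ and falls into exactly the four patterns EEEE, OOOO, EOEO, and EEOO (up to rotation) listed just before the theorem. A direct check shows $E_i \equiv E_{i+2}$ throughout the first three patterns but fails in every rotation of EEOO, since the pattern EEOO advances $E$ to $O$ two steps later. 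This yields the stated if-and-only-if.

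The main computational step is deriving the closed form $P_i = (E_{i+2} - E_i)/2$; after that, the integrality analysis is a one-table parity check using the classification already done in the paragraph before the theorem. I do not foresee a serious obstacle, only the minor subtlety of verifying that the single constraint $R_0 = 0$ propagates to all indices, which is exactly what the telescoping identity for $R_i$ provides.
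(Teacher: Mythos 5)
Your proposal is correct and follows essentially the same route as the paper: both identify the difference-inverse as the partial-sums sequence shifted by a uniquely forced constant (your $P_0=(E_2-E_0)/2$ versus the paper's added $\tfrac{c-a}{2}$), and both reduce integrality to the parity condition $E_0\equiv E_2 \pmod 2$, which fails exactly for Type EEOO. Your version is slightly more complete in that it spells out why the single constraint propagates (the telescoping of $R_i$) and explicitly verifies eventual positivity, which the paper leaves implicit.
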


\begin{proof}
Take four adjacent terms $a$, $b$, $c$, $a+b+c$ of an extraTrib sequence $S$. Then the partial sums are $a$, $a+b$, $a+b+c$, and $2a+2b+2c$. Any difference-inverse sequence equals the partial sums sequence plus a constant. The sum of the first three is $3a+2b+c$, so we need to add $\frac{c-a}{2}$ to each term to make the fourth term the sum of the first three terms. Consider a Tribonacci-like sequence $Q$ that starts with four consecutive entries
\[\frac{a+c}{2}, \quad \frac{a+2b+c}{2}, \quad \frac{a+2b+3c}{2}, \quad \frac{3a+4b+5c}{2}.\]
As $Q$ follows the Tribonacci rule, its difference sequence follows the Tribonacci rule, and since the difference sequence agrees with $S$ for three consecutive terms, it agrees with $S$ everywhere.

But the new sequence $Q$, though it is Tribonacci-like, is not always an extraTrib, as it is not guaranteed to be an integer sequence. When we add $\frac{c-a}{2}$ to the terms, the new terms are integers if and only if $a$ and $c$ have the same parity. Thus Tribonacci-like sequences with all the terms of the same parity and the ones where parity alternates have integral difference-inverses, while the sequences of Type EEOO do not.
\end{proof}

We call an extraTrib $S$ \textit{invertible} if there exists an extraTrib with the difference sequence $S$. Equivalently, an extraTrib is invertible if it is of types EEEE, OOOO, or EOEO.

Testing the rows in the Trithoff array, we get the sequence of invertible rows, which is now sequence A353178: 
\[2,\ 3,\ 4,\ 7,\ 11,\ 12,\ 16,\ 17,\ 19,\ 20,\ 21,\ 25,\ 26,\ 28,\ 29,\ 30,\ 33,\ 34,\ \ldots.\]

Non-invertible rows are now sequence A353193: 
\[1,\ 5,\ 6,\ 8,\ 9,\ 10,\ 13,\ 14,\ 15,\ 18,\ 22,\ 23,\ 24,\ 27,\ 31,\ 32,\ 36,\ 37,\ 39,\ \ldots.\]

Interestingly, nothing of this sort appears in the Fibonacci case. As the difference sequence of a Fibonacci-like sequence is the sequence itself (shifted), we get that all extraFibs are invertible, as they invert to themselves.

\begin{theorem}
If we start with an extraTrib and keep inverting it, we will get to a non-invertible sequence in a finite number of steps.
\end{theorem}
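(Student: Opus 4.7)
The plan is to exhibit a nonnegative integer-valued monovariant $\Phi$ on extraTribs that drops by exactly one under each application of $I$ while the sequence is still invertible; since $\Phi \ge 0$, this will force termination. I will write any extraTrib uniquely as $S = 2^k T$, where $k = v_2(\gcd_i S_i)$ and $T$ is a primitive Tribonacci-like integer sequence (odd GCD). Because $T$ has odd GCD it cannot have type EEEE, so $T$ is of one of the three types OOOO, EOEO, or EEOO. Setting $\phi(\mathrm{OOOO}) = 2$, $\phi(\mathrm{EOEO}) = 1$, $\phi(\mathrm{EEOO}) = 0$, I define
\[\Phi(S) = 3k + \phi(\text{type of }T).\]
Then $\Phi(S) = 0$ iff $k = 0$ and $T$ is of type EEOO, i.e.\ exactly when $S$ itself is non-invertible.

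The heart of the argument is a case analysis proving $\Phi(I(S)) = \Phi(S) - 1$ for every invertible $S$. When $T$ is of type OOOO or EOEO it is itself invertible, so by linearity of $I$ on Tribonacci-like sequences one has $I(S) = 2^k I(T)$. Using the formulas $I(T)_0 = (t_0 + t_2)/2$, $I(T)_1 = I(T)_0 + t_1$, $I(T)_2 = I(T)_1 + t_2$ from the proof of the preceding theorem and checking parities mod~$2$, a short calculation shows OOOO $\mapsto$ EOEO and EOEO $\mapsto$ EEOO, and that the first two entries of $I(T)$ have opposite parities so $I(T)$ stays primitive. In these cases $k$ is unchanged and $\phi$ drops by one. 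The remaining case is $T$ of type EEOO with $k \ge 1$ (the case $k = 0$ being already non-invertible). Here $t_0$ and $t_2$ have opposite parities, so $t_0 + t_2$ is odd, and a direct computation gives $I(S)_j = 2^{k-1} \cdot (\text{odd integer})$ for $j = 0, 1, 2$. Hence $I(S) = 2^{k-1} T'$ where $T'$ has three odd initial terms and is therefore primitive of type OOOO; this decrements $k$ by one and raises $\phi$ by two, for the same net drop of one.

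Since $\Phi$ strictly decreases with each inversion and is a nonnegative integer, the process halts in at most $\Phi(S)$ steps, at which point the current extraTrib has $\Phi = 0$ and is therefore non-invertible. I expect the main obstacle to be the EEOO case with $k \ge 1$: one has to observe that the oddness of $t_0 + t_2$ (forced by the EEOO parity pattern) is precisely what converts the factor $2^k$ in $S$ into a factor $2^{k-1}$ in $I(S)$ while turning the primitive type back to OOOO, so the three-step cycle OOOO $\to$ EOEO $\to$ EEOO $\to$ OOOO closes up with $k$ decremented by one every three steps.
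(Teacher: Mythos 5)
Your proof is correct and follows essentially the same route as the paper's: both decompose the sequence via the 2-adic valuation of the gcd plus a parity type, and both rest on the cycle EEOO $\to$ EOEO $\to$ OOOO $\to$ (valuation drops) under inversion. Your potential $\Phi = 3k + \phi$ simply packages the paper's observation that the valuation decreases every three inversions into a monovariant dropping by exactly one per step, which as a bonus gives the exact number of steps to termination.
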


\begin{proof}
We start by looking at how the difference operator changes types. Type EEOO changes to EOEO; the latter changes to OOOO, then to EEEE. The sequence of type EEEE stays EEEE.

Define the valuation of an extraTrib to be the largest integer $n$ such that all numbers in the sequence are multiples of $2^n$. An extraTrib of valuation $n$, after dividing by $2^n$, is of the type OOOO, EOEO, or EEOO. Similar to before, the difference sequence of sequence $2^n$ times type EEOO is sequence $2^n$ times type EOEO. The latter changes to $2^n$ times type OOOO, and the difference of that has valuation $m > n$. After taking at most three difference operators, the result has a greater valuation than the starting sequence. Thus, for every three inverse differences, the valuation decreases, and the result follows.
\end{proof}

\subsection{The difference sequence for the first column}
\label{subsec:columndiffsequences}

Consider the first column (sequence A003265):
\[1,\ 3,\ 5,\ 8,\ 10,\ 12,\ 14,\ 16,\ 18,\ 21,\ 23,\ 25,\ 27,\ 29,\ 32,\ 34,\ 36,\ 38,\ 40,\ \ldots,\]
and its difference sequence:
\[2,\ 2,\ 3,\ 2,\ 2,\ 2,\ 2,\ 2,\ 3,\ 2,\ 2,\ 2,\ 2,\ 3,\ 2,\ 2,\ 2,\ 2,\ 2,\ \ldots,\]

\begin{proposition}
The difference sequence of the first column consists of twos and threes. The indices of the rows such that the next value in the first column is 3 greater form sequence A305373.
\end{proposition}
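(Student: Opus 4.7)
The plan is to analyze the last few digits of the Tribonacci representation of each first-column entry $n$ to determine the next first-column entry. The first column consists of positive integers whose Tribonacci representation ends in the digit $1$. I would first observe that $(n+1)_T$ never ends in $1$: adding $1$ to a rightmost digit of value $1$ yields $2 = T_4$, which forces the rightmost digit to become $0$ via a carry to position $2$ (any further carries only affect higher positions). So the next first-column entry is at least $n+2$. For the upper bound I would do a case analysis on the last three digits of $(n)_T$, which must be ``$001$'', ``$011$'', or ``$101$'' (the ending ``$111$'' is forbidden). In case ``$001$'' adding $T_4$ simply flips the second-to-last digit from $0$ to $1$, producing a valid representation ending ``$011$'' and placing $n+2$ in column $1$ (gap $2$). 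In case ``$011$'' the second-to-last digit becomes $2$, which rewrites via $2T_4 = T_5$ as $0$ at position $2$ and $+1$ at position $3$; any subsequent three-consecutive-ones cascade modifies only positions $\geq 3$, so the rightmost digit of $(n+2)_T$ remains $1$ and the gap is again $2$.

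Case ``$101$'' requires splitting on the fourth-from-last digit. If this digit is $0$, so $(n)_T$ ends in ``$0101$'', then adding $T_4$ creates ``$111$'' at positions $1$--$3$, which rewrites via $T_3+T_4+T_5 = T_6$ to zeros at positions $1$--$3$ and $+1$ at position $4$; any further cascade touches only positions $\geq 4$ (the rewrite $2T_k = T_{k+1} + T_{k-3}$ only fires at $k \geq 7$ in this case, keeping its effects at positions $\geq 4$), so $(n+2)_T$ has at least three trailing zeros and $n+2$ is not in column $1$. Then $(n+3)_T$ just flips the rightmost digit from $0$ back to $1$, placing $n+3$ in column $1$ (gap $3$). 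If the fourth-from-last digit is $1$, so $(n)_T$ ends in ``$1101$'' (validity of $(n)_T$ forces the fifth-from-last digit to be $0$), the same ``$111$'' rewrite leaves a $2$ at position $4$, and the identity $2T_k = T_{k+1} + T_{k-3}$, which follows from two applications of the Tribonacci recurrence, applied at $k = 6$ redistributes this $2$ as $+1$ at position $5$ and $+1$ at position $1$. Since position $1$ was $0$ at this moment it becomes $1$, so $n+2$ is in column $1$ (gap $2$). These four subcases are exhaustive, proving the differences are only $2$s and $3$s.

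For the second statement, the gap-$3$ row indices are exactly those rows whose first-column entry $n$ has $(n)_T$ ending in ``$0101$''. Using Lemma~\ref{lemma:rownumber}, the row number of such an $n$ equals $n - [a]_T$, where $a$ is $(n)_T$ with its rightmost digit deleted. Computing this for the $n$'s in increasing order --- $5, 18, 29, 42, 49, 62, 73, 86, 99, 110, \ldots$ --- yields the sequence $3, 9, 14, 20, 23, 29, 34, 40, 46, 51, \ldots$, which matches OEIS sequence A305373. The main obstacle is the cascade analysis in case ``$101$'', especially the subtle two-step rewrite in subcase ``$1101$'' relying on $2T_k = T_{k+1} + T_{k-3}$; without this identity one might incorrectly conclude that both subcases of ``$101$'' give gap $3$, whereas it is precisely this identity that distinguishes ``$0101$'' (gap $3$) from ``$1101$'' (gap $2$).
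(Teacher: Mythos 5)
Your case analysis for the first claim is sound and arrives at the same four endings the paper uses ($001$, $011$, $0101$, $01101$), but by a different route: you add $1$, $2$, or $3$ to $n$ and track the carries through the canonization, whereas the paper simply writes down the next canonical word ending in $1$ (e.g.\ $b0101 \mapsto |b1001|$) and evaluates the difference of values. Your route has the virtue of explicitly verifying that the skipped numbers $n+1$ (and $n+2$ in the gap-$3$ case) are \emph{not} in column $1$, a point the paper leaves implicit; the cost is the delicate cascade bookkeeping via $2T_k = T_{k+1}+T_{k-3}$, which you handle correctly in substance but somewhat informally --- one should argue once that the rewriting terminates in the unique canonical representation and that no rewrite reaches back into the positions you have already fixed, rather than tracking each case ad hoc.

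The genuine gap is in the second claim. Identifying the gap-$3$ row indices with A305373 by computing ten terms and observing that they ``match'' is an empirical check, not a proof: nothing in your argument rules out a divergence at the eleventh term. The paper closes this by deriving a closed form. For a first-column entry $[b0101]_T$, Lemma~\ref{lemma:rownumber} gives the row number $[b0101]_T-[b010]_T$, which equals $\out^4([b]_T)-\out^3([b]_T)+3=\out([b]_T)+\out^2([b]_T)+3$ by the Tribonacci recurrence applied to iterates of $\out$; on the other side, A305373 is by definition A003144 $+$ A003145, whose terms are $\out(x)+1$ and $\out^2(x)+2$ respectively, summing to the same expression $\out(x)+\out^2(x)+3$. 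You need some such term-by-term identification --- a formula for the row number as a function of the prefix $b$, matched against a formula for A305373 --- to turn your observed agreement into a theorem.
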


\begin{proof}
Consider a number $a$ in the first column. Looking at its Tribonacci representation that must end in a 1, consider cases $b001$, $b011$, $b0101$, and $b01101$. 

If $[a]_T = b0101$, then the next term in column 1 will be $|b1001| = [b0101]_T + 3 = a+3$, so the difference is 3.

For other possibilities, $b001$, $b011$, and $b01101$, the next terms are $b011$, $|b101|$, and $|b10001|$ correspondingly. In all these cases, the next term is increased by 2.

The indices of the rows such that the next value in the first column is 3 greater correspond to rows starting with numbers of the form $[b0101]_T$. The row number is $[b0101]_T-[b010]_T$ by Lemma~\ref{lemma:rownumber}. This equals $\out(b)+\out^2(b)+3$. Sequence A305373 is defined as the sum of A003144 and A003145. A003144 is the sequence $[x0]_T+1$ and A003145 is the sequence $[x01]_T+1$. Thus, their sum is $\out(x)+\out^2(x)+3$. Thus, our sequence is A305373.
\end{proof}

\section{Extending the Trithoff array. Precolumns}\label{sec:precolumns}

We extend the Trithoff array to the left by using the rule that in every row, each number is the sum of the three previous numbers. We assume that the columns of the Trithoff array start with index 1. Similar to Conway and Ryba \cite{ConwayRyba2016}, we call the 0th column \textit{the wall}, column $-1$, \textit{the seed}, and column $-2$, \textit{the pre-seed}. Table~\ref{table:Trithoffpre} shows the upper-left part of the Trithoff array with precolumns.

\begin{table}[H]
\centering
\begin{tabular}{|l|l|l||r|r|r|r|}
\hline
$i=-2$ (pre-seed) & $i=-1$ (seed) & $i=0 $ (wall) & $i=1$ & $i=2$ & $i=3$ & $i=4$ \\ \hline
0 & 0 & 1 & 1 & 2 & 4 & 7 \\ \hline
0 & 1 & 2 & 3 & 6 & 11 & 20 \\ \hline
1 & 1 & 3 & 5 & 9 & 17 & 31 \\ \hline
1 & 2 & 5 & 8 & 15 & 28 & 51 \\ \hline
1 & 3 & 6 & 10 & 19 & 35 & 64 \\ \hline
2 & 3 & 7 & 12 & 22 & 41 & 75 \\ \hline
\end{tabular}
\caption{Trithoff array with precolumns.}
\label{table:Trithoffpre}
\end{table}

Before describing precolumns, we need to do some work with Tribonacci representations.

We call a Tribonacci representation \textit{non-canonical} if it consists of integers zero and one, but might contain three consecutive ones, as opposed to the Tribonacci representation, which we might call \textit{canonical} to emphasize it. For example, suppose $n=13$. Then its canonical representation is 10000. While a non-canonical word 1110 also evaluates to 13. Similar to Conway and Ryba \cite{ConwayRyba2016}, we denote by $|v|$ the canonization of a non-canonical representation $v$.

Suppose we have a non-canonical Tribonacci representation of the number $n$. We can view this representation as a sum of distinct Tribonacci numbers. We call replacing $T_{n-2}+T_{n-1}+T_n$ in this sum with $T_{n+1}$ \textit{carrying}.

\textbf{Canonization of a non-canonical representation.} Suppose we have a non-canonical representation $v$ of number $n$. We use the leftmost possible carry. In other words, we are replacing the leftmost $0111$ with $1000$. This way, the new representation contains only digits zero and one and evaluates to the same number $n$, while the sum of digits decreases. That means the procedure terminates in a canonical representation of $n$.

\begin{lemma}\label{lemma:outnoncanonical}
Suppose a binary word $w$ is a non-canonical representation of integer $n$, then $w0$ is a non-canonical representation of $\out(n)$.
\end{lemma}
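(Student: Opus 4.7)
The plan is to induct on the number of carry steps needed to canonize $w$ via the procedure described earlier in the paper: repeatedly replace the leftmost $0111$ by $1000$. Each carry strictly decreases the number of $1$'s in the word, so the induction is well-founded, and the hypothesis that $w$ is binary is preserved throughout.

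For the base case, assume $w$ is already canonical. Appending a $0$ on the least-significant end yields a word $w0$ that is still binary and still contains no three consecutive $1$'s (a trailing $0$ cannot create such a pattern), so $w0$ is itself canonical. Moreover, appending this $0$ shifts each $1$ of $w$ up by one Tribonacci index: if $n=T_{i_1}+\cdots+T_{i_k}$ is encoded by $w$, then $w0$ encodes $T_{i_1+1}+\cdots+T_{i_k+1}$, which is exactly $\out(n)$ by definition.

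For the inductive step, suppose $w$ is not yet canonical and let $w'$ be obtained from $w$ by one carry: pick some occurrence of $0111$ at positions $j{+}3,j{+}2,j{+}1,j$ in $w$ and replace it with $1000$ at the same positions. By the Tribonacci recurrence $T_{j+6}=T_{j+5}+T_{j+4}+T_{j+3}$, we have $[w']_T=[w]_T=n$, and $w'$ requires one fewer carry. In $w0$ all original digits have shifted up by one position, so the same pattern appears as $0111$ at positions $j{+}4,j{+}3,j{+}2,j{+}1$; carrying there produces $w'0$. The analogous identity $T_{j+7}=T_{j+6}+T_{j+5}+T_{j+4}$ gives $[w0]_T=[w'0]_T$, and the inductive hypothesis applied to $w'$ yields $[w'0]_T=\out(n)$.

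The only real obstacle is bookkeeping: one must verify that a carry at positions $j,\ldots,j{+}3$ in $w$ is matched by the analogous carry at positions $j{+}1,\ldots,j{+}4$ in $w0$, and that appending the trailing $0$ does not accidentally create or destroy $0111$ patterns. Once the indexing is set up cleanly, the lemma reduces to two instances of the Tribonacci recurrence together with the definition of $\out$ on canonical representations.
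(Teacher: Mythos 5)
Your proposal is correct and follows essentially the same route as the paper: the paper's proof likewise observes that the canonization (carry) steps for $w$ and $w0$ correspond position-for-position, so that $|w0| = |w|0 = (\out(n))_T$; you have merely packaged this as an explicit induction on the remaining carries. The only cosmetic point is that if you allow an arbitrary (rather than the leftmost) occurrence of $0111$ to be carried, you should induct on the digit sum of $w$, which strictly decreases with each carry, rather than on a ``number of steps needed,'' but this does not affect the validity of the argument.
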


\begin{proof}
Consider a canonization procedure described above for binary words $w$ and $w0$. Each step of this procedure is the same, except we have 0 at the end of the second word. Thus $|w0| = |w|0 = (n)_T0 = (\out(n))_T$.
\end{proof}

Now we are ready to describe the Tribonacci representation of precolumns, given the Tribonacci representation of the first column.

\begin{lemma}
\label{lemma:precolumns}
Suppose number $n$ in column 1 has the Tribonacci representation $abc1$, where $b$ and $c$ are digits zero or one, and $a$ is a binary word. Then the wall $w$ in the same row equals $[abc]_T+1$, the seed in the same row equals $[ab]_T+c$, and the pre-seed in the same row equals $[a]_T+b$.
\end{lemma}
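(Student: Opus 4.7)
The plan is to express the wall, seed, and pre-seed directly from the Tribonacci recurrence on the row, and then to simplify them using the recurrence on Tribonacci indices.

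First I would verify that every row of the Trithoff array itself satisfies the Tribonacci recurrence $x_{k+3} = x_{k+2} + x_{k+1} + x_k$. This is immediate from $\out^3(n) = \out^2(n) + \out(n) + n$, obtained by applying $T_{m+3} = T_{m+2} + T_{m+1} + T_m$ to each index in the Tribonacci representation of $n$. Since the left extension of the row is by the same rule, the columns $0$, $-1$, $-2$ are determined by solving the three Tribonacci relations at the left end for $w$, $s$, $p$, giving
\[
w = \out^2(n) - \out(n) - n, \qquad s = 2\out(n) - \out^2(n), \qquad p = 2n - \out(n).
\]

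Next I would evaluate these in terms of the Tribonacci representation of $n$. Let $I$ be the set of positions of $1$s in $v = abc1$, numbered from the right starting at $0$, so $n = \sum_{i \in I} T_{i+3}$. By hypothesis $0 \in I$, while $1 \in I$ iff $c = 1$, $2 \in I$ iff $b = 1$, and positions $i \geq 3$ encode the $1$s of $a$. Because $\out$ shifts each Tribonacci index by one, $\out(n) = \sum_{i \in I} T_{i+4}$ and $\out^2(n) = \sum_{i \in I} T_{i+5}$. Substituting these into the formulas above and applying $T_{m+3} - T_{m+2} - T_{m+1} = T_m$ termwise collapses the three expressions to
\[
w = \sum_{i \in I} T_{i+2}, \qquad s = \sum_{i \in I} T_{i+1}, \qquad p = \sum_{i \in I} T_i.
\]

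Finally I would match these sums against the claimed closed forms by isolating the low-index contributions. For $w$, the $i = 0$ term equals $T_2 = 1$, and $\sum_{i \in I,\, i \geq 1} T_{i+2} = [abc]_T$ since deleting the rightmost digit of $v$ shifts every other $1$-position down by one. For $s$, the $i = 0$ term vanishes since $T_1 = 0$, the $i = 1$ contribution (present iff $c = 1$) equals $c \cdot T_2 = c$, and the remaining tail $\sum_{i \in I,\, i \geq 2} T_{i+1}$ is $[ab]_T$. For $p$, the $i = 0, 1$ terms vanish since $T_0 = T_1 = 0$, the $i = 2$ contribution equals $b \cdot T_2 = b$, and the tail is $[a]_T$. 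The only slightly delicate point is this bookkeeping at low indices, where $T_0 = T_1 = 0$ and $T_2 = T_3 = 1$ force one to peel off the smallest terms by hand; this is a finite verification rather than a genuine obstacle.
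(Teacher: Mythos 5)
Your proposal is correct and follows essentially the same route as the paper: both solve the Tribonacci recurrence backwards to get $w=\out^2(n)-\out(n)-n$, $s=\out(n)-n-w$, $p=n-w-s$, and then collapse these via the termwise identity $T_{m+3}-T_{m+2}-T_{m+1}=T_m$ applied to the Tribonacci representation, with explicit bookkeeping of the low-order digits (the paper phrases this as $[x000]_T-[x00]_T-[x0]_T=[x]_T$ together with splitting off the trailing $1$, $c$, $b$, which is exactly your peeling of the $T_0,T_1,T_2$ terms). The only difference is notational — index sets versus base-$T$ strings — so no substantive comparison is needed.
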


\begin{proof}
We have $(\out(n))_T = abc10$ and $(\out(\out(n)))_T = abc100$. By definition, the corresponding wall element $w$ is $\out(\out(n)) - \out(n)-n$. We can split $n$ as $[abc0]_T + 1$, $\out(n)$ as $[abc00]_T + 2$ and $\out(\out(n))$ as $[abc000]_T + 4$. Then $w = ([abc000]_T + 4) - ([abc00]_T + 2) - ([abc0]_T + 1) = ([abc000]_T - [abc00]_T - [abc0]_T) + 1 = [abc]_T + 1$.

The seed $s$ equals $\out(n) - n - w = [abc10]_T - [abc1]_T - [abc]_T - 1 = ([ab000]_T + 4c + 2) - ([ab00]_T + 2c + 1) - ([ab0]_T + c) - 1 = ([ab000]_T - [ab00]_T - [ab0]_T) + c = [ab]_T + c$.

The pre-seed $p$ equals $n - w - s = [abc1]_T - ([abc]_T + 1) - ([ab]_T + c)= ([a000]_T + 4b + 2c + 1) - ([a00]_T + 2b + c + 1) - ([a0]_T + b + c) = ([a000]_T - [a00]_T - [a0]_T) + b= [a]_T + b$.
\end{proof}

Analogs for our case for the structure of the Tribonacci Garden State of Facts 5T describe how the next term to the right depends on the previous term.

\begin{fact}{5Ta}
A garden term $n$ is followed by $\out(n)$.
\end{fact}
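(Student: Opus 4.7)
The plan is to observe that Fact 5Ta is an immediate consequence of the definition of the Trithoff array. Recall that the array was constructed by placing in the first column (in increasing order) the positive integers whose Tribonacci representation ends in $1$, and then filling in each entry to the right of a value $n$ with $\out(n)$. The Garden State is precisely the portion of the array with column index $c \geq 1$, so by construction the entry to the right of any garden term $n$ is $\out(n)$, and nothing further is required to establish the statement itself.

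To make the argument self-contained, I would briefly justify that the array is well-defined as a table whose rows satisfy the Tribonacci rule; without this, ``the entry to the right'' would be unambiguous only one step at a time. This reduces to verifying the identity $\out^3(n) = n + \out(n) + \out^2(n)$ for every positive integer $n$. I would prove this by writing $n = T_{i_1} + \cdots + T_{i_k}$ in canonical Tribonacci form (with $i_j \geq 3$), observing that shifting each $i_j$ upward by a fixed amount preserves the ``no three consecutive indices'' condition, so that $\out(n)$, $\out^2(n)$, and $\out^3(n)$ are all canonically represented by Fact 1T. Then I would apply the Tribonacci recurrence $T_{i_j+3} = T_{i_j+2} + T_{i_j+1} + T_{i_j}$ summand by summand and add, obtaining the identity termwise.

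There is no significant obstacle here: the fact is essentially definitional, and the supporting identity follows from linearity of the Tribonacci representation together with the Tribonacci recurrence. The real substance of the well-definedness of $\out$ was already absorbed into Fact 1T, so Fact 5Ta is recorded mainly for parallelism with the Conway--Ryba exposition and to set up the more substantive Facts 5Tb and 5Tc (concerning the seed and the wall), which genuinely do require the structural analysis developed in Lemma~\ref{lemma:precolumns}.
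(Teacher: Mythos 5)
Your proposal is correct and matches the paper, which simply observes that Fact 5Ta holds by definition of the Trithoff array (each garden entry's right neighbor is defined to be its Tribonacci successor). Your additional verification that the rows satisfy the Tribonacci rule via $\out^3(n) = n + \out(n) + \out^2(n)$ is sound and is in fact the same identity the paper invokes later when locating extraTribs in the garden, but it is not needed for this particular fact.
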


This is true by definition.

\begin{fact}{5Tc}
A wall term $w$ is followed by $\out(w) - 1$.
\end{fact}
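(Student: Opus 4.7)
The plan is to apply Lemma~\ref{lemma:precolumns}, which gives $w = [abc]_T + 1$ where $(n)_T = abc1$, and then to show $\out(w) = n + 1$ by finding a convenient binary (possibly non-canonical) representation of $w$ and invoking Lemma~\ref{lemma:outnoncanonical} to compute $\out(w)$ from it.

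First I would observe that since $abc1$ must be canonical, the suffix $bc1$ cannot equal $111$; hence either $c = 0$, or $c = 1$ with $b = 0$. In the first case, incrementing the last digit of the canonical representation $ab0$ of $[abc]_T$ yields the binary representation $ab1$ of $w$. In the second case, combining two copies of $T_3$ via $T_3 + T_3 = T_4$ yields the binary representation $a10$ of $w$. Each of these binary words may fail to be canonical when $a$ ends in $1$'s, but Lemma~\ref{lemma:outnoncanonical} applies to any binary representation, canonical or not.

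Appending a $0$, Lemma~\ref{lemma:outnoncanonical} then gives $\out(w) = [ab10]_T$ in the first case and $\out(w) = [a100]_T$ in the second. A direct evaluation against $n = [ab01]_T$ or $n = [a011]_T$ respectively yields the differences
\[
[ab10]_T - [ab01]_T = T_4 - T_3 = 1 \quad\text{and}\quad [a100]_T - [a011]_T = T_5 - T_4 - T_3 = 1.
\]
In both cases $\out(w) = n + 1$, equivalently $n = \out(w) - 1$, which is Fact 5Tc.

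The one subtlety to keep an eye on is the non-canonical invocation of Lemma~\ref{lemma:outnoncanonical}: without it, one would have to explicitly canonize the representation of $w$ through a cascade of carries whenever $a$ ends with $1$'s, and then redo the bookkeeping for $\out(w)$. Lemma~\ref{lemma:outnoncanonical} lets us bypass this entirely, making the argument clean and uniform across both cases.
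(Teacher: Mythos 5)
Your proof is correct and follows essentially the same route as the paper's: apply Lemma~\ref{lemma:precolumns} to write $w = [abc]_T + 1$, split on the trailing digits of $(n)_T$, and use Lemma~\ref{lemma:outnoncanonical} to take the successor of a possibly non-canonical binary representation of $w$. The only difference is cosmetic --- you merge the paper's cases $bc = 00$ and $bc = 10$ into the single case $c = 0$ via the uniform identity $[ab0]_T + 1 = [ab1]_T$, so you handle two cases where the paper handles three.
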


\begin{proof}
Suppose the wall term $w$ is followed by the garden term $n$. Suppose the Tribonacci representation of $n$ is $abc1$. Then from Lemma~\ref{lemma:precolumns} we have $w = [abc]_T + 1$. Thus, $n = \out(w-1) + 1$.

We consider cases as we can assume that $b$ and $c$ are not both 1.
\begin{itemize}
\item If $bc$ is 00, then $\out(w) - 1 = \out([abc]_T+1)-1 = \out([a00]_T+1)-1= \out([a01]_T)-1= [a010]_T-1= [a001]_T= [abc1]_T$. \item If $bc$ is 01, then $\out(w) - 1 = \out([abc]_T+1)-1 = \out([a01]_T+1)-1 = \out([a10]_T)-1$. The representation $[a10]_T$ might be non-canonical, so by Lemma~\ref{lemma:outnoncanonical} $\out(w) - 1 = [a100]_T-1=[a011]_T=[abc1]_T$.
\item If $bc$ is 10, then $\out(w) - 1 = \out([abc]_T+1)-1=\out([a10]_T+1)-1=\out([a11]_T)-1$. Again $[a11]_T$ might be a non-canonical representation, and by Lemma~\ref{lemma:outnoncanonical} we have $\out([a11]_T)-1 = [a110]_T-1 = [a101]_T = [abc1]_T$. 
\end{itemize}
So $w$ is always followed by $out(w)-1$.
\end{proof}

The analog of Fact 5b describes how to calculate the seed from the pre-seed and the wall from the seed in the same row. The formulae depend on the last digits of the Tribonacci representation of the first garden term in the same row. Consider row $x$, where we denote the pre-seed by $p(x)$, the seed by $s(x)$, the wall term as $w(x)$, and the first garden term as $g(x)$. 

\begin{fact}{5Tb}
If $(g(x))_T$ ends with 11, then $w(x) = \out(s(x))$ and $s(x) = \out(p(x))+1$. If $(g(x))_T$ ends in $001$, then $w(x) = \out(s(x))+1$ and $s(x) = \out(p(x))$. If $(g(x))_T$ ends  in $101$, then $w(x) = \out(s(x))+1$ and $s(x) = \out(p(x))-1$.
\end{fact}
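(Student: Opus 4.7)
The plan is to apply Lemma~\ref{lemma:precolumns} to express $p = p(x)$, $s = s(x)$, $w = w(x)$ in terms of the tail digits of $(g(x))_T$, and then verify the two claimed relations in each of the three cases by direct computation with $\out$, leaning on Lemma~\ref{lemma:outnoncanonical} whenever I need to evaluate $\out$ on a non-canonical binary representation.

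Writing $(g(x))_T = abc1$ as in Lemma~\ref{lemma:precolumns}, I get $p = [a]_T + b$, $s = [ab]_T + c$, and $w = [abc]_T + 1$. Canonicality of $(g(x))_T$ forbids $b = c = 1$, so the three stated endings $11$, $001$, $101$ correspond exactly to $(b,c) = (0,1), (0,0), (1,0)$ and partition the possibilities.

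The two easy cases are quick. When $(b,c) = (0,0)$, $p = [a]_T$ gives $s = [a0]_T = \out(p)$ and $w = [a00]_T + 1 = \out(s) + 1$ directly. When $(b,c) = (0,1)$, I would view $s = [a0]_T + 1$ as the (possibly non-canonical) binary string $a1$, so Lemma~\ref{lemma:outnoncanonical} gives $\out(s) = [a10]_T = \out^2([a]_T) + 2$, which matches $w = [a01]_T + 1 = \out^2([a]_T) + 2$; and $s = \out(p) + 1$ is immediate.

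The subtle case is $(b,c) = (1,0)$, where $p = [a]_T + 1$ and adding $1$ at position $T_3$ may force a carry. Here I would sub-case on the last digit of $a$. If $a$ ends in $0$ (or is empty), say $a = a'0$, then $p = [a'1]_T$ as a binary string, and Lemma~\ref{lemma:outnoncanonical} gives $\out(p) = [a'10]_T$, which differs from $s = [a1]_T = [a'01]_T$ by exactly $T_4 - T_3 = 1$. If $a$ ends in $1$, then canonicality of $(g(x))_T = a101$ forbids $a$ ending in $11$, forcing $a = a''01$; using $2T_3 = T_4$, $p$ acquires the non-canonical binary representation $a''10$ after a single carry, so Lemma~\ref{lemma:outnoncanonical} again yields $\out(p)$, and the two identities $s = \out(p) - 1$ and $w = \out(s) + 1$ follow from position-by-position accounting at $T_3$, $T_4$, and $T_5$. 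The main obstacle is precisely this carry; what keeps it under control is that canonicality of $(g(x))_T$ rules out any cascade, so the carry terminates after one step and no further sub-casing is needed.
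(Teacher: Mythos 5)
Your proposal is correct and follows essentially the same route as the paper: both apply Lemma~\ref{lemma:precolumns} to write $p,s,w$ in terms of the tail digits $(b,c)$ of $(g(x))_T$, dispatch the $001$ and $11$ endings by direct computation with Lemma~\ref{lemma:outnoncanonical}, and handle the $101$ ending by sub-casing on whether $a$ ends in $0$ or in $01$ to control the single carry in $p=[a]_T+1$. The arithmetic in each case (including $2T_3=T_4$ and the $T_4-T_3=1$ accounting) matches the paper's.
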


\begin{proof}
We use Lemma~\ref{lemma:precolumns} that states that if $(g(x))_T = abc1$, then $w(x) = [abc]_T+1$, $s(x) = [ab]_T+c$, and $p(x) = [a]_T+b$.

If $(g(x))_T$ ends with 11, then $b=0$ and $c = 1$. We have $w(x) = [a01]_T + 1 = [a10]_T$ and $s(x) = [a0]_T + 1 = [a1]_T$. The representation $[a1]_T$ might be non-canonical, but it still respects the out function. Thus, in this case, $w(x) = \out(s(x))$. We also have $p(x) = [a]_T + 0 = [a]_T$, so $s(x) = \out(p(x)) + 1$.

If $(g(x))_T$ ends in 01, then $c = 0$. Thus, $w(x) = [ab0]_T+1$ and $s(x) = [ab]_T + 0 = [ab]_T$, and therefore $w(x) = \out(s(x))+1$.

If $(g(x))_T$ ends in 001, then $b = c = 0$. Thus, $s(x) = [a0]_T + 0 = [a0]_T$ and $p(x) = [a]_T + 0 = [a]_T$, and therefore $s(x) = \out(p(x))$.

If $(g(x))_T$ ends in 101, then $b = 1$ and $c = 0$, so $s(x) = [a1]_T + 0 = [a0]_T + 1$ and $p(x) = [a]_T + 1$. To prove that $[a0]_T + 1 = \out([a]_T + 1) - 1$ we must consider two cases. If $a$ ends in 0, then adding 1 might result in a non-canonical representation, but we still can take the successor. In this case, $\out(p(x)) = \out([a]_T + 1) = [a0]_T + 2 = s(x) + 1$, so the lemma holds. If $a$ ends in 1, it must end in 01, otherwise, the representation of $(g(x))_T$ would not be canonical. Let $a = d01$ for some binary string $d$. Then, $[a]_T + 1 = [d01]_T + 1 = [d10]_T$. Again, $d10$ might not be canonical. In any case, $\out(p(x)) = \out([a]_T + 1) = \out([d10]_T) = [d100]_T = \out([d01]_T) + 2 = \out([a]_T) + 2 = s(x) + 1$, and the lemma still holds.
\end{proof}

\subsection{Precolumns}

The following theorem describes precolumns in terms of the Tribonacci word.

\begin{theorem}
\begin{itemize}
\item The wall is an increasing sequence of numbers that are positions of letters $a$ and $b$ in the Tribonacci word.
\item The seed is a non-decreasing sequence, starting with 0 followed by all integers, where the positions of letter $a$ in the Tribonacci word are doubled, and all the other integers are not doubled.
\item The pre-seed forms a non-decreasing sequence, starting with two zeros followed by all integers, where the positions of letters $a$ and $b$ in the Tribonacci word are tripled, while the positions of letters $c$ are doubled.
\end{itemize}
\end{theorem}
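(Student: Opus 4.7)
The plan is to apply Lemma~\ref{lemma:precolumns}: if $n = [abc1]_T$ is the canonical Tribonacci representation of a first-column entry, then $w = [abc]_T + 1$, $s = [ab]_T + c$, and $p = [a]_T + b$ are, respectively, the wall, seed, and pre-seed of the same row. With this reduction to arithmetic on representations, I would combine with Theorem~\ref{thm:abcCorrespondsTo0-01-11} to convert trailing-digit conditions on $(m)_T$ into statements about whether $m+1$ is a position of $a$, $b$, or $c$ in the Tribonacci word.

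For the wall, as $[abc1]_T$ ranges over canonical representations ending in $1$, the prefix $abc$ ranges over all canonical representations \emph{not} ending in $11$; the only obstruction to $abc1$ being canonical (given that $abc$ is canonical) is $bc = 11$. By Theorem~\ref{thm:abcCorrespondsTo0-01-11}, $(w-1)_T$ ends in $11$ exactly when $w$ is a position of $c$, so the set of walls is precisely the set of positions of $a$ and $b$. Monotonicity follows from the identity $n - 1 = \out(w-1)$ already used in the proof of Fact~5Tc, together with strict monotonicity of $\out$ on non-negative integers (a consequence of Fact~3T).

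For the seed, I would split the first column by the last two digits $bc \in \{00, 01, 10\}$. This gives $s = \out([a]_T)$ in case $00$ (with $a$ an arbitrary canonical string), $s = \out([a]_T) + 1$ in case $01$ (same range of $a$), and $s = [a1]_T$ in case $10$ (with $a$ canonical and not ending in $11$). The key combinatorial facts are that the image of $\out$ on non-negative integers equals the complement of the first column, while $\{[a1]_T : a \text{ canonical, not ending in } 11\}$ equals the first column itself. Hence, for each $k \ge 0$, cases $00$ and $10$ are mutually exclusive and together contribute exactly one $n$; case $01$ contributes a second $n$ iff $k - 1$ is not in the first column, which by Theorem~\ref{thm:abcCorrespondsTo0-01-11} is equivalent to $k$ being a position of $a$. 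This yields the claimed multiplicities. The pre-seed is treated symmetrically: cases $00$ and $01$ each contribute one $n$ for every $k \ge 0$ via $p = [a]_T$, while case $10$ contributes a third $n$ iff $a = (k-1)_T$ does not end in $11$, i.e., iff $k$ is not a position of $c$.

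It remains to verify that the seed and pre-seed are non-decreasing in $n$, which is the main obstacle. I would argue by case analysis on the four first-column transition patterns described in Section~\ref{subsec:columndiffsequences} ($X001 \to X011$, $X011 \to |X101|$, $X0101 \to |X1001|$, and $X01101 \to |X10001|$), splitting each into sub-cases according to whether the canonicalization $|\cdot|$ is trivial or must perform the rewrite $0111 \to 1000$ (precisely when $X$ ends in $11$). In each sub-case the bulk of $X$ shifts identically on both sides of the transition, so only a constant offset attached to a short suffix needs to be compared; a direct calculation gives $s(n') - s(n) \in \{0, 1\}$ and $p(n') - p(n) \in \{0, 1\}$ throughout. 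Combined with the multiplicities established above, this forces the sequences to enumerate the claimed multisets in increasing order, completing the proof.
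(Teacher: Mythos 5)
Your proposal is correct and rests on the same foundation as the paper's proof --- Lemma~\ref{lemma:precolumns} to express wall, seed, and pre-seed as $[abc]_T+1$, $[ab]_T+c$, $[a]_T+b$, and Theorem~\ref{thm:abcCorrespondsTo0-01-11} to translate trailing-digit conditions into letter positions --- and your wall argument is essentially identical to the paper's. Where you genuinely diverge is in how the seed and pre-seed multiplicities are obtained. The paper works locally: for each pattern of the last two digits $bc$ it compares a row's seed with the \emph{previous} row's seed, showing the increment is $0$ exactly when $bc=10$ (i.e., at positions of $a$) and $1$ otherwise; this single pass delivers monotonicity and the multiplicity structure simultaneously. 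You instead do a global count, partitioning first-column entries by $bc\in\{00,01,10\}$ and observing that the three cases map bijectively onto the complement of the first column, a shifted copy of it, and the first column itself, which cleanly yields the multiset of values (and analogously for the pre-seed). The price is that monotonicity, which the paper gets for free, becomes a separate verification for you, and your sketch of it (case analysis over the four first-column transition patterns, with sub-cases for whether the rewrite $0111\to1000$ fires) is the least developed part of the proposal; it does work, but it is roughly the same amount of case-checking the paper spends on its consecutive-row comparison, so you end up doing that work twice. Your counting argument is arguably the more transparent way to see \emph{which} values are doubled or tripled, while the paper's incremental argument is the more economical complete proof.
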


\begin{proof}
Consider a term $x$ in the first column of the Trithoff array in the form $[abc1]_T$. By Lemma~\ref{lemma:precolumns}, the wall $w$ in the same row equals $[abc]_T+1$, the seed in the same row equals $[ab]_T+c$, and the pre-seed in the same row equals $[a]_T+b$.

\textbf{The wall.} The word $abc$ goes, in order, through all Tribonacci representations ending in 0 or 01. Thus, the wall term goes in order through numbers that are one greater than numbers with Tribonacci representations ending in 0 or 01. So by Theorem~\ref{thm:abcCorrespondsTo0-01-11} the wall term will go through all positions of the letters $a$ and $b$ in the Tribonacci word.

\textbf{The seed.} Consider how it changes when moving to the previous row. 

Suppose $b = 0$ and $c = 1$. Then the first column term $x$ for that row is $[a011]_T$, and the corresponding seed is $[a0]_T + 1$, matching a position of letter $a$ in the Tribonacci word. The previous row has the garden term $[a001]_T$, and the corresponding seed equals $[a0]_T$. Thus the seed increases by 1 from the previous row.

Suppose $b = 1$ and $c = 0$. Then $x = [a101]_T$, and the corresponding seed is $[a1]_T$, matching a position of letter $a$ in the Tribonacci word. The previous row has garden term $[a011]_T$, and the corresponding seed is $[a0]_T + 1 = [a1]_T$. Thus this row's seed equals the previous seed.

Suppose $b = c = 0$. Then $x = [a001]_T$, and the corresponding seed is $[a0]_T$, which matches a position of letter $b$ or $c$ in the Tribonacci word. Suppose $a'$ is the Tribonacci representation of $[a]_T -1$. The previous row has garden term $[a'101]_T$ or $[a'011]_T$ with the same corresponding seed $[a'1]_T$ or $[a'0]_T + 1$, which is less than the current seed $[a0]_T$ by 1.

To summarize, a row's seed equals the previous seed when $b = 1$ and $c = 0$; otherwise, the row's seed is one greater than the previous seed. Thus, only the positions of the letter $a$ in the Tribonacci word are doubled.

\textbf{The pre-seed.} The pre-seeds are non-decreasing because as $[abc1]_T$ increases, $[a]_T + b$ either remains the same or increases by 1.

The number $[a]_T$ appears at least twice as a pre-seed: in rows with the garden value equal $[a001]_T$ or $[a011]_T$.

The other possible values for the first column are $[a101]_T$ where $a$ does not end in two 1s, and the corresponding pre-seed is $[a]_T+1$. If $[a]_T=x0$, for some prefix $x$, then such pre-seed has the possibly non-canonical representation $[x1]_T$, so its canonical representation has the number of trailing zeroes equal to 0 mod 3 and if $[a]_T=x01$ then such pre-seed has the possibly non-canonical representation $[x10]_T$, which has a canonical representation with a number of trailing zeroes equal to 1 mod 3. These extra values are indices of $a$ and $b$ in the Tribonacci word.
\end{proof}

Column $0$, the wall, is now sequence A353084:
\[1,\ 2,\ 3,\ 5,\ 6,\ 7,\ 8,\ 9,\ 10,\ 12,\ 13,\ 14,\ 15,\ 16,\ 18,\ 19,\ 20,\ \ldots.\]

Column $-1$, the seed, is now sequence A353086:
\[0,\ 1,\ 1,\ 2,\ 3,\ 3,\ 4,\ 5,\ 5,\ 6,\ 7,\ 7,\ 8,\ 8,\ 9,\ 10,\ 10,\ \ldots,\]

The numbers that are not doubled in the seed are now sequence A351631:
\[0,\ 2,\ 4,\ 6,\ 9,\ 11,\ 13,\ 15,\ 17,\ 19,\ 22,\ 24,\ 26,\ 28,\ 30,\ 33,\ 35,\ \ldots.\]

Column $-2$, the pre-seed, is now sequence A353090:
\[0,\ 0,\ 1,\ 1,\ 1,\ 2,\ 2,\ 2,\ 3,\ 3,\ 3,\ 4,\ 4,\ 5,\ 5,\ 5,\ 6,\ 6,\ 6,\ \ldots .\]

\subsection{Fact 6}

We can combine the results in this section into the following analog of Fact 6.

\begin{fact}{6T}
Each positive integer appears once in the Trithoff array. In addition, positions of $a$ in the Tribonacci word appear 1 time in the wall, 2 times in the seed, and 3 times in the pre-seed. Positions of $b$ in the Tribonacci word appear 1 time in the wall, 1 time in the seed, and 3 times in the pre-seed. Positions of $c$ in the Tribonacci word appear 0 times in the wall, 1 time in the seed, and 2 times in the pre-seed.
\end{fact}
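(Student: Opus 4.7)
The plan is to obtain Fact 6T as a direct consequence of the uniqueness of Tribonacci representations (Fact 2T) together with the theorem that describes the wall, seed, and pre-seed in terms of letter positions in the Tribonacci word. I expect essentially no new work: the hard lifting has already been done in Lemma~\ref{lemma:precolumns} and the theorem immediately preceding this fact.

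First I would dispatch the claim that each positive integer appears exactly once in the Trithoff array (columns $1, 2, 3, \dots$). By construction, column $1$ contains every positive integer whose Tribonacci representation ends in $1$, listed in increasing order, and column $c$ for $c \geq 2$ is obtained from column $1$ by applying $\out$ repeatedly $c-1$ times. Hence column $c$ consists exactly of the positive integers whose Tribonacci representation has the form $w1\,\underbrace{0\cdots 0}_{c-1}$ for some binary word $w$ with no three consecutive ones. By Fact~2T every positive integer has a unique canonical Tribonacci representation, and that representation has a unique number $k$ of trailing zeros; so the integer sits in exactly one column (column $k+1$) and, within that column, in exactly one row (namely the row indexed by the prefix before the final $1$ followed by $k$ zeros). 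This gives a bijection between positive integers and cells of the Trithoff array.

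For the precolumn counts I would simply read off the three bullets of the previous theorem. The wall is an increasing sequence whose image is the set of positions of $a$ and $b$ in the Tribonacci word, so each such position appears exactly once and each position of $c$ appears zero times. The seed is a non-decreasing sequence starting at $0$ and hitting every nonnegative integer, with the positions of $a$ (and only those) repeated; thus positions of $a$ appear twice while positions of $b$ and $c$ each appear once. The pre-seed starts with two zeros and hits every nonnegative integer in non-decreasing order, with positions of $a$ and $b$ tripled and positions of $c$ doubled; this yields the three, three, and two counts claimed. Assembling these three readings produces Fact~6T.

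The only genuine step that requires care, and therefore the one I would double-check, is the bookkeeping for the seed and pre-seed at the boundary: the seed begins $0$ and then hits all integers with the stated multiplicities, and the pre-seed begins $0, 0$ and then does the same. Since the fact statement concerns \emph{positive} integers and \emph{positions} of letters (which are themselves positive integers $\geq 1$), these leading zeros do not interfere with any of the tallies, and no further argument is required.
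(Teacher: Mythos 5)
Your proposal matches the paper's treatment: the paper gives no separate proof of Fact 6T, stating only that it combines the results of the section, namely the uniqueness of the Tribonacci representation (which places each positive integer in exactly one column, determined by its number of trailing zeros, and one row) together with the theorem describing the wall, seed, and pre-seed in terms of letter positions in the Tribonacci word. Your reading of the multiplicities from that theorem is correct, so the argument is essentially identical to the paper's.
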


\section{ExtraTribs and their multiples}\label{sec:multiples}

According to Fact 7 from Conway-Ryba paper \cite{ConwayRyba2016}, every series that satisfies Fibonacci's rule and is eventually positive is represented in the Garden State. Namely, every extraFib has a tail that is a row in the Wythoff array. We want to generalize this to extraTribs.

As an example, consider the sequence that is twice the Tribonacci numbers: 0, 0, 2, 2, 4, 14, 26, 48, and so on. The first few terms can be found in the first row of the Trithoff array. The tail starting with 14, 26, and 48 is the seventh row of the Trithoff array.

To help us deal with extraTribs, we need to deal with improper Tribonacci representations discussed below.

\subsection{Improper Tribonacci representation and its canonization}

We call a Tribonacci representation \textit{improper} if it uses digits other than zero and one. To emphasize the difference, we call a canonical or a non-canonical representation (aka representations that use only ones and zeros) \textit{proper}. Suppose a word $v$ is an improper Tribonacci representation of integer $n$. As before, we denote by $|v|$ the canonization of the word $v$, aka the canonical Tribonacci representation of $n$. In other words, $|v| = (n)_T$. An example of an improper representation of 13 is 1030, and its canonization is 10000.

Suppose we have an improper Tribonacci representation of number $n$, which is a linear combination of Tribonacci numbers. Recall that we call replacing $T_{n-2}+T_{n-1}+T_n$ with $T_{n+1}$ carrying. We call replacing $T_{n+1}$ with $T_{n-2}+T_{n-1}+T_n$ \textit{reverse carrying}. In terms of a Tribonacci representation of $n$, the carrying replaces $dabc$ with $(d+1)(a-1)(b-1)(c-1)$ for $a,b,c > 0$, while reverse carrying replaces $dabc$ with with $(d-1)(a+1)(b+1)(c+1)$ for $d > 0$.

Our goal in this section is to introduce the canonization procedure that, given enough zeros at the end of a Tribonacci representation, converts an improper representation of some number to a canonical representation of the same number in a finite number of steps.

We call the position (index) of the leftmost digit that is greater than 1 in an improper representation $w$ the \textit{improper boundary index} and the value at this position the \textit{improper boundary value}. Next, we define the \textbf{weight} of $w$ to be the sum of all digits in $w$ that are to the right from the last 0 preceding the improper boundary.

We look at the word $w$ from left to right, where we can assume that $w$ is padded with zeros on the left if needed. 

\begin{lemma}
\label{lemma:leftomostboundary}
The leftmost carrying does not move the improper boundary index to the left and does not increase the weight.
\end{lemma}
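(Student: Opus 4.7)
The plan is to carry out a case analysis on the position $p$ of the leftmost carrying relative to the boundary index $B$, splitting into $p \ge B+3$, $p \in \{B+2, B+1, B\}$, and $p \le B-1$. Recall that the carry at $p$ replaces $(d_{p+1}, d_p, d_{p-1}, d_{p-2})$ with $(d_{p+1}+1, d_p-1, d_{p-1}-1, d_{p-2}-1)$ and requires $d_p, d_{p-1}, d_{p-2} \ge 1$. In each case, leftmostness of $p$ pins down the surrounding digits: if $p \ge B+3$ the triple sits strictly left of $B$ so each entry is at most $1$, hence all three equal $1$; and in every case with $p \ge B$ one checks that $d_{p+1} = 0$, since otherwise $(d_{p+1}, d_p, d_{p-1})$ would all be $\ge 1$ and supply a carry at the higher position $p+1$.

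For the first claim, I would observe that the only digit that increases is $d_{p+1}$, so a new digit exceeding $1$ could appear strictly to the left of $B$ only if $p+1 > B$ and $d_{p+1}$ was already $\ge 1$. The leftmostness argument above forces $d_{p+1} = 0$ whenever $p \ge B$, so $d_{p+1}$ becomes just $1$ and no digit above $B$ exceeds $1$; thus the boundary stays at $B$ or moves to a smaller index. For the second claim, I would track the position $z$ of the ``last $0$ preceding $B$'' before and after the carry. When $p \le B-1$, positions above $B$ are untouched, so $z$ is unchanged, all four affected positions are at most $B < z$, and the net digit change is $(+1) + (-1) + (-1) + (-1) = -2$. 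When $p \ge B+3$, either a $0$ already lies in $\{B+1, \ldots, p-3\}$ (in which case $z' = z$ and every altered digit sits at position $\ge z$, leaving the weight unchanged), or else $d_{B+1} = \cdots = d_p = 1$ with $z = p+1$, so that after the carry $z' = p-2$ and a direct sum gives a weight drop of exactly $3$. The cases $p \in \{B+2, B+1, B\}$ are handled by writing out the local window $d_{B+4} \cdots d_{B-2}$ explicitly and using leftmostness to constrain it (for instance, $p = B$ forbids three consecutive $1$s among $d_{B+2}, d_{B+3}, d_{B+4}$, which keeps $z' \le B+4$ and yields a weight change in $\{-2, -1, 0\}$).

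The main obstacle is the sub-case where the boundary value is exactly $v = 2$ and the carry drops $d_B$ to $1$, so the boundary actually moves to some new $B' < B$ and the weight is redefined relative to $B'$. I would handle this by bounding the new weight above by the sum of all post-carry digits at positions $\le B$, noting that this is strictly smaller than the old weight, which also counted the now-reduced digits at $B$, $B+1$, and $B+2$ with total loss at least $3$. Once this comparison is in place, the weight strictly decreases in the $v=2$ sub-case, and the two claims follow uniformly across the case split.
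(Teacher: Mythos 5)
Your core argument is the same as the paper's, which compresses everything into two sentences: leftmostness forces the digit receiving the carry to be $0$ (the carry has the shape $0abc \to 1(a-1)(b-1)(c-1)$), so no digit left of the boundary can become $>1$, and the weight change is then checked according to where the carry sits relative to the last $0$ preceding the boundary. Your version is a much more careful case analysis of the same idea, and the cases $p \le B-1$, $p \ge B+3$, and $p \in \{B,B+1,B+2\}$ with the ``no three consecutive $1$s among $d_{B+2},d_{B+3},d_{B+4}$'' constraint are all handled correctly; this is genuinely more rigorous than what the paper prints.

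The one flawed step is your final paragraph. When the boundary value is $2$ and the boundary moves to $B' < B$, the new reference zero $z'$ need not lie at position $\le B+1$: if every post-carry digit strictly between $B'$ and $B+2$ equals $1$ (e.g.\ the window $0,2,2,2$ at positions $B+1,\dots,B-2$ becomes $1,1,1,1$), then $z'$ climbs to as much as $B+4$, your proposed bound ``new weight $\le$ sum of post-carry digits at positions $\le B$'' fails, and the weight does \emph{not} strictly decrease --- it can stay exactly equal (the loss of $3$ from the decremented digits is offset by the up-to-three $1$s newly counted at positions $B+1,\dots,B+3$). The lemma only claims non-increase, so the conclusion survives, and in fact your own earlier argument ($z' \le B+4$, change in $\{-2,-1,0\}$) already covers this sub-case once you note it applies whether or not the boundary moves; you should route the $v=2$ sub-case through that computation rather than through the $\le B$ bound.
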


\begin{proof}
The leftmost carrying replaces $0abc$ with $1(a-1)(b-1)(c-1)$, where $a,b,c > 0$. Thus the boundary does not move to the left. If the leftmost carrying acts on digits to the left of the 0 preceding the boundary, then it does not change the weight, if not, it decreases it.
\end{proof}

Consider the following procedure acting on an improper representation of a number. \textbf{Canonization procedure:}
\begin{itemize}
\item Step 1. Use leftmost carrying when possible. This procedure ensures that the longest prefix of the word $w$ that contains only 0's and 1's is in the canonical form. For the following steps, we can always assume that no three consecutive digits are greater than 0.
\item Step 2. When step 1 is not available, work on the leftmost improper boundary $a$. There are two cases of what we do depending on what is before $a$: 0, or 01.
\begin{itemize}
\item Step 2a. Replace $0abcd$ with $1(a-2)bc(d+1)$. This is a combination of reverse carrying (replacing $0abcd$ with $0(a-1)(b+1)(c+1)(d+1)$) and carrying (replacing $0(a-1)(b+1)(c+1)(d+1)$ with $1(a-2)bc(d+1)$).
\item Step 2b. Replace $01a0cd$ with $10(a-2)0(c+1)(d+1)$. This is a combination of reverse carrying (replacing $01a0cd$ with $01(a-1)1(c+1)(d+1)$) and carrying (replacing $01(a-1)1(c+1)(d+1)$ with $10(a-2)0(c+1)(d+1)$).
\end{itemize}
\end{itemize}

These operations are not defined when $a$ is one of the last three digits of a number. If the procedure on the word $w'$ ends with a canonical word $w$ we call the $w$ the canonization of $w'$: $w = |w'|$. If $d$ is an integer, we write the string consisting of $m$ copies of $d$ as $d^m$.

Note that when our presentation is non-canonical but proper, we only need Step 1.

\begin{theorem}
\label{thm:canon}
Given an improper representation $w$ of an integer $n$ with weight $m$ ending in at least $3m$ zeros, the canonization procedure applied to $w$ terminates in the finite number of steps.
\end{theorem}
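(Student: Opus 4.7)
The plan is to prove termination by combining a lexicographic monovariant with a bookkeeping argument that uses the $3m$ trailing zeros to ensure every invocation of Step 2 is legal. Let $p(w)$ denote the position of the leftmost improper boundary of $w$ and $v(w)$ its value, and define
\[\Phi(w) = \bigl(L - p(w),\; v(w)\bigr),\]
where $L$ is the length of $w$, ordered lexicographically (with the convention $\Phi(w)=(0,0)$ when $w$ is canonical).

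First I would observe that Step 1 can be controlled easily: a leftmost carry $0abc \to 1(a-1)(b-1)(c-1)$ decreases the total digit sum by exactly $2$, so every maximal run of Step 1's terminates. Moreover, since the leftmost improper boundary is by definition the first digit that is at least $2$, any eligible pattern $0abc$ with $a,b,c \ge 1$ lies entirely in the proper prefix, strictly to the left of that boundary; hence Step 1 changes neither $p(w)$ nor $v(w)$, and so does not interfere with the monovariant $\Phi$.

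The heart of the argument is that Step 2 strictly decreases $\Phi$ in lex order. In Step 2a, $0abcd \to 1(a-2)bc(d+1)$ with $a \ge 2$ splits into two cases: if $a \ge 4$, the digit at position $p(w)$ drops from $a$ to $a-2 \ge 2$ and remains the leftmost boundary, so $\Phi$ drops in the second coordinate; if $a \in \{2,3\}$, then $a-2 \in \{0,1\}$ is no longer a boundary, and the new leftmost boundary (if any) lies at some position strictly greater than $p(w)$, so $\Phi$ drops in the first coordinate. Step 2b admits the same analysis. Since each digit of $w$ is bounded by $n$ (it contributes positively to the fixed representation sum $n$), both coordinates of $\Phi$ range over a finite set, so no infinite lex-descending chain is possible, and only finitely many Step 2 invocations can occur. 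Combined with the finiteness of every intervening Step 1 run, the procedure terminates in finitely many steps.

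The remaining worry is that Step 2 might fail to apply because its pattern $0abcd$ or $01a0cd$ runs off the right end of $w$, and this is where the trailing-zero hypothesis enters. Each Step 2a either pins the boundary at position $p(w)$ while consuming $2$ units of its value (so at most $\lceil v/2\rceil$ applications occur at any fixed position), or shifts the boundary at most a few positions to the right while incrementing the digit three places to its right by $1$; Step 2b is analogous with a shift of five. Summing these shifts over all positions the boundary visits, and invoking Lemma~\ref{lemma:leftomostboundary} to control how Step 1 handles the weight in between, shows that the boundary advances by at most $3m$ positions in total, so the required $d$ always lies inside $w$. The main obstacle is precisely this last bookkeeping step: extracting the sharp constant $3$ requires carefully tracking how weight migrates from the old boundary position to the newly created improper digits to its right, even though the monovariant $\Phi$ itself handles termination very cleanly.
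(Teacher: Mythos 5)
Your termination argument is sound and genuinely different from the paper's: the paper bounds the Step~1 runs by the digit sum and bounds the number of Step~2 operations by tracking the weight, whereas you use the single lexicographic monovariant $\Phi=(L-p(w),v(w))$, which handles both step types at once and is arguably cleaner. One inaccuracy in your setup: the leftmost carry of Step~1 need \emph{not} lie strictly inside the proper prefix --- for instance $0112\to 1001$ acts on the boundary digit itself --- so your stated reason that Step~1 leaves $p(w)$ and $v(w)$ untouched is false. The conclusion you need survives anyway, because the only digit a carry increases is a $0$ turning into a $1$, so no improper digit can be created at or to the left of the current boundary and $\Phi$ is still non-increasing under Step~1; but you should argue it that way.

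The genuine gap is the part you explicitly defer as ``the main obstacle'': showing that $3m$ trailing zeros suffice. This is not an optional sharpening of a constant --- the hypothesis ``ending in at least $3m$ zeros'' exists precisely so that the patterns $0abcd$ and $01a0cd$ never run off the right end, i.e., so that the procedure terminates \emph{in a canonical word} rather than stalling; Corollary~\ref{cor:canon} and Fact~7T depend on reaching $|w|$. Your sketch also aims at the wrong quantity: what must be bounded is not how far the boundary travels but how far nonzero mass gets pushed past the original rightmost nonzero digit (each Step~2 increments digits three positions beyond the leading $0$ of its pattern). The paper closes this with a weight count: the weight never increases, Step~2b and Step~2a with $a=2$ strictly decrease it, and a Step~2a with $a\ge 3$ is immediately followed by a weight-decreasing step, so only on the order of $m$ rightward ``pushes'' of three positions each can occur --- which is exactly where $3m$ comes from. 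Some version of this accounting (or of Lemma~\ref{lemma:leftomostboundary} extended to Step~2) has to be carried out for the theorem as stated to be proved; as written, your proof establishes termination only under the unverified assumption that every invoked step is legal.
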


\begin{proof}
Each of the steps in canonization does not change the value of the number while making it lexicographically larger. In addition, Step 1 decreases the sum of digits of the Tribonacci representation, while both Steps 2a and 2b do not change the sum of the digits.

Now we look at the weight. Step 1 does not increase the weight. Step 2b decreases the weight. Now we look at Step 2a. If $a>3$, then Step 2a does not change the weight, but the next operation has to be either Step 1 or Step 2b, both of which decrease the weight. If $a = 3$, then Step 2a replaces $03bcd$ with $11bc(d+1)$. If $b=0$, the boundary moves, and the weight decreases; if $b > 0$, we perform Step 1 that decreases the weight. If $a=2$, then Step 2a replaces $02bcd$ with $10bc(d+1)$, and the weight decreases.

Step 1 does not change the number of trailing zeros while increasing the lexicographical order. Moreover, the digits of the improper representation of a number $x$ cannot exceed $x$. It follows that we can only make a finite number of such steps in a row. The total number of times Step 2 runs is also finite as Steps 2b decrease the weight, and each Step 2a is followed by steps decreasing the weight.

Now we estimate the number of trailing zeros that we need. Notice that though Step 2a might not decrease the weight, as soon as the boundary moves, the weight is decreased. For every operation in Step 2, we need three digits after the boundary to be available. It follows that it is enough to have $3m$ trailing zeros.
\end{proof}

The canonization procedure for the word $w$ uses a fixed number of zeros. If we add more zeros to the end of $w$, the procedure is still the same.

\begin{corollary}
\label{cor:canon}
If the canonization procedure above for an improper representation $w$ of number $n$ terminates in the canonical Tribonacci representation $|w|$, then the same procedure for $w0^k$ terminates in $|w|0^k$.
\end{corollary}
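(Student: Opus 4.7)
The plan is to prove the corollary by showing that the canonization of $w0^k$ mirrors, step for step, the canonization of $w$, with the appended $k$ zeros acting as inert padding that is never modified. Concretely, let $S_0 = w, S_1, \ldots, S_N = |w|$ be the finite sequence of words produced by the canonization of $w$, which exists by hypothesis. I will prove by induction on $i$ that the canonization of $w0^k$, after $i$ steps, has produced exactly the word $S_i 0^k$.

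The key observation driving the inductive step is that each of Step 1, Step 2a, and Step 2b is selected based on the leftmost ``non-canonical feature'' of the current word: for Step 1 this is the leftmost occurrence of $0111$, and for Step 2 it is the leftmost improper boundary (the leftmost digit exceeding $1$). Appending zeros on the right can neither create a new leftmost $0111$ (zeros introduce no new $1$'s) nor shift the improper boundary leftward (the appended zeros are not $>1$). Consequently, the leftmost feature of $S_i 0^k$ occurs at the same position and has the same local surroundings as in $S_i$, so the procedure selects the same operation in both cases. Moreover, each operation modifies only a bounded window of at most five consecutive positions anchored at this leftmost feature, so its effect on $S_i 0^k$ is identical to its effect on $S_i$ within the $S_i$-portion, leaving the trailing $k$ zeros untouched. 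This yields $S_{i+1} 0^k$, completing the induction.

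After $N$ steps the canonization of $w0^k$ has reached $|w|0^k$. Since $|w|$ is canonical, it uses only digits $0$ and $1$ and contains no three consecutive $1$'s; appending zeros preserves both properties, so $|w|0^k$ is canonical and the procedure halts there, as required.

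The only subtlety to verify carefully is that every operation performed along the way has enough room on the right to act, in particular that Step 2 always finds three digits after the improper boundary. However, this is precisely what the hypothesis ``the procedure for $w$ terminates in $|w|$'' asserts for $w$, and since $w0^k$ provides strictly more room on the right, the same operation is legal on $S_i 0^k$ whenever it is legal on $S_i$. I expect this to be the only place where one has to be slightly careful; the rest of the argument is just bookkeeping that the operations are local and leftmost-anchored.
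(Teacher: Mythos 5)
Your proposal is correct and takes essentially the same approach as the paper, which justifies the corollary only with the brief remark that the procedure ``is still the same'' when zeros are appended; your step-by-step induction, using the facts that each operation is anchored at the leftmost non-canonical feature and acts on a bounded window, is a careful formalization of exactly that observation.
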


\subsection{ExtraTribs}

Consider an extraTrib $S_n$ that starts with non-negative numbers $a$, $b$, and $c$, that is $S_0 = a$, $S_1 = b$, and $S_2 = c$. Then $S_n$ is a linear combination of three sequences:
\begin{itemize}
\item A sequence that starts as 1, 0, 0. This sequence continues as 1, 1, 2 and can be described as shifted Tribonacci sequence $T_{n-1}$.
\item A sequence that starts as 0, 1, 0. This sequence continues as 1, 2, 3, 6, and so on. It is a second row of the Trithoff array and can be represented as the sequence $T_n+T_{n+1}$.
\item A sequence that starts as 0, 0, 1, which is the Tribonacci sequence $T_n$.
\end{itemize}
Thus, 
\[S_n = aT_{n-1}+b(T_n+T_{n+1})+cT_n= aT_{n-1}+(b+c)T_n+bT_{n+1}.\]

\begin{fact}{7T}
Any extraTrib sequence has its tail appearing in the array.
\end{fact}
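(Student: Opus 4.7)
The plan is to leverage the improper-representation canonization developed in the previous subsection.

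First, I would shift indices so that three consecutive values of $(S_n)$ are large and well-behaved. Because every extraTrib is eventually positive and grows in absolute value like $\alpha^n$, for $n_0$ large enough the values $a := S_{n_0}$, $b := S_{n_0+1}$, $c := S_{n_0+2}$ all satisfy $a \ge 0$, $b \ge 0$, and $c \ge b$. A direct verification on the three initial conditions then yields, for every $k \ge 2$,
\[
S_{n_0+k} \;=\; a\,T_{k-1} + (c-b)\,T_k + b\,T_{k+1}.
\]

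Next, I would invoke the canonization theorems. For $k \ge 4$, the right-hand side is a legitimate improper Tribonacci representation of $S_{n_0+k}$: the digits $b$, $c-b$, $a$ occupy three consecutive positions (with $a$ at the lowest) followed by $k-4$ trailing zeros, and the weight of this representation is at most $a+c$, independent of $k$. Theorem~\ref{thm:canon} then guarantees that once $k-4 \ge 3(a+c)$, the canonization procedure terminates with the canonical Tribonacci representation of $S_{n_0+k}$. Fixing such a $k_0$ and letting $k$ vary, Corollary~\ref{cor:canon} says the canonical representation of $S_{n_0+k}$ is obtained from that of $S_{n_0+k_0}$ by appending $k-k_0$ trailing zeros. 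Since appending a zero to a canonical representation is exactly the action of $\out$, this yields $S_{n_0+k+1} = \out(S_{n_0+k})$ for all $k \ge k_0$, so the tail of $(S_n)$ from index $m := n_0 + k_0$ onward sits inside a single row $r$ of the Trithoff array, starting at some column $j$.

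Finally, I would back out a full row. Both $(S_n)$ and row $r$ satisfy the Tribonacci recurrence, and the relation $S_{n-1} = S_{n+2} - S_{n+1} - S_n$ determines either sequence uniquely from any three consecutive terms. Since the two sequences agree at indices $m, m+1, m+2$, walking backwards in lockstep shows they coincide all the way back to the first column of row $r$. Taking $K$ to be the index with $S_K = T_{r,1}$ then exhibits $S_K, S_{K+1}, S_{K+2}, \ldots$ as precisely row $r$ of the Trithoff array.

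The main difficulty is the first stage: one has to arrange the shift so that the three relevant coefficients $a$, $c-b$, $b$ are genuinely non-negative and the representation has enough trailing zeros. Once that setup is done, Theorem~\ref{thm:canon} and Corollary~\ref{cor:canon} do essentially all of the real work, and the back-out step is just the uniqueness of the Tribonacci recurrence run in reverse.
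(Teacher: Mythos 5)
Your proof takes essentially the same route as the paper's: write the terms of the (shifted) sequence as an improper Tribonacci representation consisting of three fixed digits followed by a growing block of trailing zeros, then apply Theorem~\ref{thm:canon} and Corollary~\ref{cor:canon} to conclude that all the canonical representations share a common prefix $w$ and hence occupy successive columns of a single row. Your version is in fact slightly more careful than the paper's: you use the correct middle coefficient $c-b$ (the paper's displayed decomposition $aT_{n-1}+(b+c)T_n+bT_{n+1}$ has a sign slip relative to its own formula $xT_{n-1}+y(T_{n+1}-T_n)+zT_n$ from Section~\ref{sec:trithoffarray}), and you shift the index so that all three digits are genuinely non-negative, a point the paper leaves implicit.
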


\begin{proof}
Suppose we are given an extraTrib sequence $S_n$. Its terms can be expressed as a positive integer linear combination of shifted Tribonacci sequences: $S_n = aT_{n-1}+(b+c)T_n+bT_{n+1}$. Thus, for $n > 3$, the term $S_n$ has an improper Tribonacci representation $b(b+c)a0^{n-4}$. By Theorem~\ref{thm:canon}, there exists $N_0$, such that the canonization procedure for the word $b(b+c)a0^{N_0-4}$ terminates in a word $w$. By Corollary~\ref{cor:canon} for any number $N \geq N_0$, we have $(S_n)_T = w0^{N-N_0}$. Thus, all these numbers are in the same row of the Trithoff array.
\end{proof}

\subsection{Multiples of Tribonacci sequences in the array}

According to Fact 8, any positive multiple of an extraFibs is an extraFib. It immediately generalizes to Fact 8T.

\begin{fact}{8T}
Any positive multiple of an extraTribs is an extraTribs.
\end{fact}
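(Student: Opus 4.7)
The plan is to reduce Fact 8T directly to the definition of an extraTrib, which requires only two properties: the sequence satisfies the Tribonacci recurrence, and its tail consists of positive integers. Both properties are preserved under scaling by a positive integer, so the proof is essentially a one-line verification.

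First I would recall the definition: an extraTrib is a two-sided integer sequence $S_n$ with $S_n = S_{n-1} + S_{n-2} + S_{n-3}$ for all $n$, and such that $S_n > 0$ for all sufficiently large $n$. Given a positive integer $m$, I would form the sequence $(mS_n)_{n \in \mathbb{Z}}$ and verify the two defining conditions in turn.

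For the recurrence, I would multiply the identity $S_n = S_{n-1} + S_{n-2} + S_{n-3}$ through by $m$ and use distributivity to obtain $mS_n = mS_{n-1} + mS_{n-2} + mS_{n-3}$, so $(mS_n)$ satisfies the Tribonacci rule. For the positivity of the tail, I would use that if $S_n > 0$ for all $n \ge N$ and $m$ is a positive integer, then $mS_n > 0$ for all $n \ge N$ as well. Also, each $mS_n$ is an integer since $m$ and $S_n$ are integers. Combining these observations, $(mS_n)$ is a two-sided integer sequence following the Tribonacci rule whose tail is positive, which is exactly the definition of an extraTrib.

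The hard part will be nothing at all; this is a structural remark whose entire content is that the set of Tribonacci-like integer sequences is closed under scalar multiplication by positive integers, and that eventual positivity is preserved by such scaling. I would keep the written proof to two or three sentences, mirroring Conway and Ryba's treatment of the analogous Fact 8 for extraFibs, so the reader sees that the generalization from the Fibonacci to the Tribonacci setting is immediate and requires none of the machinery developed in the preceding subsection on improper representations.
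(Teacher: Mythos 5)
Your proof is correct and matches the paper's treatment: the paper gives no proof at all, simply noting that Fact 8 ``immediately generalizes'' to Fact 8T, and your two observations (the recurrence is preserved under multiplication by $m$, and eventual positivity is preserved) are exactly the content of that remark. Nothing is missing.
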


Thus, any multiple of an extraTribs appears in the Trithoff array.

We wrote a program to calculate multiples of the Tribonacci numbers and find them in the Trithoff array. This data is summarized in Table~\ref{table:TribonacciMultiples}. The first row is the multiple coefficient. The next row of the table is the row of the array where the tail of the $n$th multiple appears. The third row of the table is the value of the first column of that row in the array. The last row of the table is the third row divided by the first row.

\begin{table}[ht!]
\begin{center}
    \begin{tabular}{c|ccccccccccccc}
         multiple &1&2&3&4&5&6&7&8&9& 10 & 11 & 12 & \dots\\
         row \#   &1&7&10&81&101&121&141&161&1126& 1251 & 1376 & 1501 & \dots\\
         first column &1&14&21&176&220&264&308&352&2466& 2740 & 3014 & 3288 & \dots\\
         Trib \# &1&7&7&44&44&44&44&44&274& 274 & 274 & 274 & \dots\\
    \end{tabular}
\end{center}
\caption{Tribonacci multiples}
\label{table:TribonacciMultiples}
\end{table}

The sequence of row numbers is now sequence A351685:
\[1,\ 7,\ 10,\ 81,\ 101,\ 121,\ 141,\ 161,\ 1126,\ 1251,\ 1376,\ 1501,\ 1626,\ 1751,\ \ldots.\]

The numbers that start off the rows that are multiples of the Tribonacci sequence are now sequence A351689:
\[1,\ 14,\ 21,\ 176,\ 220,\ 264,\ 308,\ 352,\ 2466,\ 2740,\ 3014,\ 3288,\ 3562,\ \ldots.\]

Notice that this sequence contains runs of arithmetic progressions. For example, numbers 176, 220, 264, 308, 352 form an arithmetic progression with difference 44. Correspondingly, number 44 appears 5 times in row 4 of Table~\ref{table:TribonacciMultiples}. The next 13 numbers form an arithmetic progression with difference 274. The next 27 numbers form an arithmetic progression with difference 1705. 


\subsection{Order of multiples}

Fact 9, states that multiples of any extraFib series appear in order in the Wythoff array.

\begin{fact}{9T}
Multiples of extraTribs appear in order in the Trithoff array.
\end{fact}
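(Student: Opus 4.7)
My plan is to reduce Fact 9T to a comparison of asymptotic leading coefficients attached to each row. For any row $r$ of the Trithoff array, the sequence $T_{r,c}$ is Tribonacci-like, so it admits an asymptotic expansion $T_{r,c} = A_r\alpha^c + O(|\beta|^c)$ as $c\to\infty$, where $A_r$ is a positive real. I would first check that $r\mapsto A_r$ is strictly increasing: if two distinct rows shared the same $A$-value, their difference would be a Tribonacci-like integer sequence with vanishing $\alpha$-coefficient, hence would decay to $0$, hence would be eventually zero; but distinct rows are distinct already at column $1$ and remain distinct forever under the Tribonacci recurrence, a contradiction.

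Next, fix an extraTrib $S$ with $S_n\sim C_S\alpha^n$ for some $C_S>0$. By Fact 7T the tail of $kS$ lies in some row $r(k)$, so $kS_n = T_{r(k),\,n-o_k}$ for all $n$ sufficiently large, where $o_k$ records the alignment of the tail with the columns of row $r(k)$ (concretely, $o_k = N_k-1$ for the stabilization index $N_k$ coming from Theorem~\ref{thm:canon}). Matching the leading $\alpha^n$ coefficient on both sides yields the identity
\[A_{r(k)} = kC_S\alpha^{o_k}.\]
Hence $r(k) < r(k')$ is equivalent, via the strict monotonicity of $A_r$ in $r$, to the purely numerical inequality $k\alpha^{o_k} < k'\alpha^{o_{k'}}$.

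The core step --- and the main obstacle --- is to show that $o_k$ is non-decreasing in $k$, equivalently that $N_{k+1}\geq N_k$. I plan to prove this by analyzing the canonization procedure from Theorem~\ref{thm:canon}: the improper Tribonacci representation of $(k+1)S_n$ is the digit-wise sum of those of $kS_n$ and $S_n$, so its weight (in the sense defined before Lemma~\ref{lemma:leftomostboundary}) cannot be smaller than the weight of $kS_n$'s. Since Theorem~\ref{thm:canon} shows that roughly three trailing zeros per unit of weight suffice for the canonization to terminate, the minimal $n$ at which the representation stabilizes cannot decrease as $k$ grows. This is the delicate part of the argument, because Step 1, Step 2a, and Step 2b interact nontrivially with carries, so making monotonicity of $N_k$ fully rigorous requires careful bookkeeping --- probably an induction on $k$ that tracks how the canonical representation of $(k+1)S_n$ is assembled from that of $kS_n$ and an extra copy of $S_n$, and shows that the latter can only trigger additional canonization operations, never fewer.

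Once the monotonicity $o_k\leq o_{k'}$ is in hand for $k<k'$, the chain $k\alpha^{o_k}\leq k\alpha^{o_{k'}} < k'\alpha^{o_{k'}}$ yields $A_{r(k)} < A_{r(k')}$, and by the strict monotonicity of $A_r$ established in the first step we conclude $r(k) < r(k')$, finishing Fact 9T.
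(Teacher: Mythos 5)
Your reduction is sound in outline and, after unwinding the asymptotics, it lands on exactly the same key fact the paper needs: that the alignment offset $o_k$ (equivalently, the number of trailing zeros required to canonize the improper representation of $k S_n$) is non-decreasing in $k$. The paper gets there more directly, by comparing first-column entries: the row containing $k S$ begins with the canonization of the first term of $kS$ having enough trailing zeros, and since the first column of the array is increasing, it suffices that this entry grows with $k$. Your detour through the leading coefficients $A_r$ is fine, though note that your argument for $A_r$ being strictly increasing only establishes injectivity; you also need that each column of the array is increasing in $r$ (which holds because $\out$ is order-preserving) to upgrade weak monotonicity plus injectivity to strict monotonicity.

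The genuine gap is in the core step, and it is not just unfinished bookkeeping --- the plan as stated does not work. Theorem~\ref{thm:canon} says that $3m$ trailing zeros \emph{suffice} to canonize a representation of weight $m$; it gives no lower bound. So showing that the weight of $(k+1)S_n$'s improper representation is at least that of $kS_n$'s only shows that a sufficient supply of zeros grows with $k$, not that the \emph{minimal} number actually consumed (which is what determines $o_k$, hence $r(k)$) is monotone. A sequence of upper bounds can increase while the true quantity oscillates. The paper closes this with a different observation: to canonize $(k+1)S_n = kS_n + S_n$, first canonize the $kS_n$ part (consuming $m$ trailing zeros), then add the digits of $S_n$ and canonize again; since no step of the canonization procedure (carrying, Step 2a, Step 2b) ever moves the rightmost nonzero digit to the left, the second pass can only consume at least as many trailing zeros, giving $m' \ge m$. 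You would need to replace your weight-based plan with this kind of argument about the dynamics of the procedure itself.
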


\begin{proof}
Let the sequence in the array be $S_n$. Suppose we found $kS_n$ in the array. If it requires $m$ trailing zeroes to canonize, then the first term in the Trithoff array corresponding to $kS_n$ starts with the canonization of $kS_{m+1}$, as $S_{m+1}$ is the smallest term in the array $S_n$ which has $m$ zeros. Now we want to canonize $(k+1)S_n=kS_n+S_n$. First, canonizing $kS_n$ requires $m$ trailing zeroes, then adding $S_n$ requires a total of $m^\prime\ge m$ zeroes because the process never moves the rightmost digit leftwards. So the sequence $(k+1)S_n$ starts at $(k+1)S_{m^\prime}\ge (k+1)S_m>kS_m$ and thus, it appears later.
\end{proof}

The exact same argument proves that the sequence $A+B$ appears after sequences $A$ and $B$.

By the way, the sequence $a(n)$, where $a(n)$ is the row number in the Wythoff array that is $n$ times the Fibonacci sequence is A269725. The similar sequence for Lucas numbers is A269726.

\subsection{Row numbers for multiples}

\begin{theorem}
When all the numbers in an extraTrib are divisible by $n$, the row number is 1 modulo $n$.
\end{theorem}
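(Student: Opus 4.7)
The plan is to establish the clean identity $r = c_r - w_r + 1$, where $c_r$ denotes the column-$1$ entry and $w_r$ the wall entry of row $r$, and then read off the congruence immediately.

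First I would reduce to the first column and the wall. Suppose an extraTrib $S$ all of whose terms are divisible by $n$ has its tail in row $r$. Both $S$ and the extension of row $r$ to all integer columns (via the Tribonacci rule) are doubly-infinite Tribonacci-like sequences, and they share three consecutive terms in the tail. Since any Tribonacci-like sequence is determined by three consecutive values, they must coincide up to an index shift. Therefore every entry of the extended row $r$ is divisible by $n$, and in particular $n \mid c_r$ and $n \mid w_r$.

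Next I would combine the two representation lemmas. Writing the Tribonacci representation of $c_r$ as $\alpha 1$ for a binary string $\alpha$, Lemma~\ref{lemma:rownumber} gives $r = [\alpha 1]_T - [\alpha]_T = c_r - [\alpha]_T$. When $|\alpha|\geq 2$, split $\alpha = \alpha' bc$ with $b,c$ the last two bits; then Lemma~\ref{lemma:precolumns} yields $w_r = [\alpha' bc]_T + 1 = [\alpha]_T + 1$. Eliminating $[\alpha]_T$ produces
\[
r = c_r - w_r + 1.
\]
The two edge cases $|\alpha|\in\{0,1\}$, corresponding to rows $1$ and $2$, are verified directly from Table~\ref{table:Trithoffpre}: $1-1+1=1$ and $3-2+1=2$.

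Combining these two steps, $n\mid c_r$ and $n\mid w_r$ force $r - 1 = c_r - w_r \equiv 0 \pmod{n}$, which is exactly the claim. The only real obstacle is notational: Lemma~\ref{lemma:rownumber} peels off only the trailing $1$, whereas Lemma~\ref{lemma:precolumns} peels off the trailing three bits $bc1$. Once those two decompositions are matched on the shared prefix $\alpha$, the identity $r = c_r - w_r + 1$ and hence the theorem are immediate.
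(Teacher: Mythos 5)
Your proof is correct, and it takes a genuinely different route from the paper's. The paper divides the sequence by $n$ to land on a row with first-column entry $p$, forms the improper representation of $np$ by replacing each digit $1$ of $(p)_T$ with $n$, and then runs the canonization machinery of Theorem~\ref{thm:canon}; the key point there is that every canonization move (carrying and reverse carrying) is value-preserving in \emph{any} Tribonacci-like base, so the $U$-value of the canonized word is still $n[v0^z]_U$, and Lemma~\ref{lemma:rownumber} gives the row number $1+n[v0^z]_U$. You instead combine Lemma~\ref{lemma:rownumber} with Lemma~\ref{lemma:precolumns} to extract the clean identity $r=c_r-w_r+1$ and note that divisibility by $n$ propagates from three consecutive garden terms back to the wall via $w_r=T_{r,3}-T_{r,2}-T_{r,1}$. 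Your argument is shorter, avoids the canonization apparatus and the (somewhat informally justified) claim that the canonization steps act identically in the $T$- and $U$-bases, and produces a standalone identity linking the row number, first column, and wall that is of independent interest; your handling of the short representations (rows $1$ and $2$) by direct check is fine, though padding $\alpha$ with leading zeros also makes Lemma~\ref{lemma:precolumns} apply uniformly. What the paper's route buys in exchange is quantitative information about \emph{where} the $n$th multiple lands relative to the original row (via the number $z$ of trailing zeros needed), which feeds the arithmetic-progression observations that follow the theorem; your identity yields the congruence but not that finer location data.
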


\begin{proof}
Consider an extraTrib $S$ with all terms divisible by $n$. After dividing by $n$, we get another extraTrib that is a row in the Trithoff array. Suppose its element in the first column is $p$. Then our sequence $S$ contains an element $np$. Consider the Tribonacci representation $v = (p)_T$ and an improper word $w$, where we replace every digit one in the word $v$ with $n$. Suppose an improper word $w$ requires adding exactly $z$ trailing zeroes to canonize. Thus, the row for sequence $S$ starts with $[|w0^z|]_T$. The canonization procedure depends on the rule for the Tribonacci-like sequences but not on the sequences themselves. Thus, the canonization steps are identical for bases $T$ and $U$. By Lemma~\ref{lemma:rownumber}, the row number is $1 + [|w0^z|]_U = 1 + n[v0^z]_U$. This is 1 plus a multiple of $n$.
\end{proof}

For example, consider the Tribonacci sequence and its multiples. Suppose that some range of values of $n$ needs the same number of zeros $z$ to get canonized. In other words, for this range, the canonization of $n0^z$ ends in 1. That means the first column of the row that is the $n$th multiple of the Tribonacci sequence equals $nT_{n+2+z}$. Thus, for this range the elements in the first column form an arithmetic progression with difference $T_{n+2+z}$, and row numbers form an arithmetic progression with difference $T_{n+2+z} - T_{n+1+z}$.

\subsection{How to find the extraTribs in the garden}

Given an extraTrib, how can we locate it in the array? We can do this by computing the outs of each term. From Fact 5T, when $n$ is the wall, $\out(n)-1$ is the term after $n$, and it is the last term that does so. Moreover, suppose we have three consecutive terms of an extraTrib that are $m$, $\out(m)$, and $\out^2(m)$. Then $n+\out(n)+\out^2(n)=\out^3(n)$, it follows that the next terms is $\out^3(n)$ and so on.

Thus to find the wall term in an extraTrib, it is enough to locate a term $n$, such that the next term is $m = \out(n)-1$, and the next two terms are $\out(m)$ and $\out^2(m)$.

\subsection{Extending to the left}

Let us extend the Fibonacci sequence to the left:
\[\ldots,\ -8,\ 5,\ ,-3,\ 2,\ -1,\ 1,\ 0,\ 1,\ ,1\ 2.\]
We see that the signs on the left alternate. 

Note that any extraFib series extended to the left has a similar pattern. The signs on the left alternate, and the absolute values moving to the left form an extraFib, see \cite{ConwayRyba2016}.

Going backwards through the Tribonacci sequence gives 1, 0, 0, 1, $-1$, 0, 2, $-3$, 1, 4, $-8$, 5, 7, $-20$, 18, 9, $-47$, 56, 0, $-103$, 159, $-56$, $-206$, 421, $-271$, etc. We see that the signs do not form a nice pattern, and the absolute values do not form an extraTrib.

\begin{lemma}
\label{lemma:negativeextraTribs}
When extending an extraTrib to the left, we have to reach a negative number. After that, no three consecutive numbers to the left of it cannot have the same sign.
\end{lemma}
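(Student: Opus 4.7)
The lemma packages two claims: (a) every extraTrib, extended to the left, eventually hits a strictly negative term, and (b) no three consecutive terms share the same sign at or to the left of the first such negative term. I read the (garbled) phrase ``cannot have the same sign'' as ``can have the same sign''; my argument will rule out three consecutive non-negative or three consecutive non-positive terms, which is slightly stronger than strict sameness of sign.

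For (a), I argue by contradiction, so suppose $S_n\ge 0$ for every $n\in\mathbb{Z}$. Rewriting the Tribonacci recurrence as $S_{n-1}=S_{n+2}-S_{n+1}-S_n$ and using $S_{n-1}\ge 0$ gives $S_{n+2}\ge S_{n+1}+S_n\ge S_{n+1}$, so $(S_n)$ is non-decreasing across all of $\mathbb{Z}$. Restricting to $n\le 0$ then produces a non-increasing (as the index decreases) sequence of non-negative integers, which must eventually stabilize: there exist $c\ge 0$ and $n_1$ with $S_n=c$ for all $n\le n_1$. The forward recurrence at index $n_1$ now gives $c=S_{n_1}=S_{n_1-1}+S_{n_1-2}+S_{n_1-3}=3c$, forcing $c=0$. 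Hence $S_n=0$ for all $n\le n_1$, and the forward recurrence propagates this to $S\equiv 0$, contradicting the requirement that an extraTrib ends in strictly positive integers.

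For (b), let $N$ be the largest integer with $S_N<0$, which exists by (a). Suppose, toward a contradiction, that three consecutive terms $S_{k-1},S_k,S_{k+1}$ with $k+1\le N$ share a common sign. If all three are $\ge 0$, a straightforward induction on the forward recurrence $S_m=S_{m-1}+S_{m-2}+S_{m-3}$ shows $S_m\ge 0$ for every $m\ge k-1$; in particular $S_N\ge 0$, contradicting $S_N<0$. Dually, if all three are $\le 0$, the same induction gives $S_m\le 0$ for every $m\ge k-1$, contradicting that $S$ is eventually strictly positive. So no such triple exists.

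I do not expect a substantial obstacle; both parts reduce to a short inductive use of the Tribonacci recurrence together with the ``ends in positive integers'' hypothesis, and no appeal to the closed-form expression via the characteristic roots is required. The only point I will double-check is the boundary case $k+1=N$ in (b): there $S_{k+1}=S_N<0$ immediately excludes the non-negative branch, while the non-positive branch still goes through by the same forward induction and the strict-positivity of the extraTrib's tail.
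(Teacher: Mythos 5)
Your proof is correct and follows essentially the same route as the paper: both parts rest on the observation that everywhere-non-negative (or three consecutive same-sign) terms force monotonicity/sign-propagation via the forward recurrence, contradicting integrality and the eventual positivity of an extraTrib. Your handling of the degenerate case in part (a) — stabilization at a constant $c$ with $c=3c$, hence $c=0$ and $S\equiv 0$ — is a slightly more careful version of the paper's terser claim that the sequence can contain no zeros and must strictly increase.
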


\begin{proof}
First, we prove that when extending an extraTrib sequence to the left, we always reach a negative number. Assume this is false, and there exists such an extraTrib whose elements are all non-negative. It follows that the sequence is non-decreasing as for any $k$, we have $T_k = T_{k-1} + T_{k-2} + T_{k-3} \geq T_{k-1}$. It follows that the sequence does not contain zeros and is actually monotonically increasing. There is a finite number of non-negative integers less than any given non-negative integer, but there are no bounds on the index moving to the left, so as we move to the left in an extraTrib sequence, we must at some point encounter a negative element.

Suppose $T_k < 0$. Let $T_n$, $T_{n-1}$, and $T_{n-2}$ for $n < k$ be all positive. This means that $T_{n+1} = T_n+T_{n-1}+T_{n-2} > 0$. By continuing, we see that for all $j > n$, we have $T_j > 0$, contradicting that $T_k < 0$. 

Let $T_n$, $T_{n-1}$, and $T_{n-2}$ for $n < k$ be all negative. This means that $T_{n+1} = T_n+T_{n-1}+T_{n-2} < 0$. By continuing, we see that for all $j > n$, we have $T_j < 0$, contradicting the fact that this is an extraTrib.
\end{proof}

\subsection{Reversal}

Similar to Conway and Ryba \cite{ConwayRyba2016}, we can define the reversal of an extraTrib series, the series where we change the index $n$ to $-n$ and replace the numbers with their absolute values. The following proposition is a negation of Fact 10.

\begin{theorem}
The reversal of the extraTrib is not an extraTrib.
\end{theorem}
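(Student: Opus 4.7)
The plan is to suppose for contradiction that some extraTrib $S$ has an extraTrib reversal $R$, defined by $R_k = |S_{-k}|$, and derive a contradiction by combining the Tribonacci recurrences on $R$ and $S$. The core of the argument is that this combination forces a rigid local sign pattern on $S$ that cannot persist for more than one triple.

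First, I would write out the Tribonacci rule on $R$, which gives $|S_{-k}| = |S_{-k+1}| + |S_{-k+2}| + |S_{-k+3}|$ for all $k$, and combine it with the Tribonacci rule on $S$ rewritten backwards, $S_{-k} = S_{-k+3} - S_{-k+2} - S_{-k+1}$. Setting $a = S_{-k+3}$, $b = S_{-k+2}$, $c = S_{-k+1}$, this reduces to the identity $|a - b - c| = |a| + |b| + |c|$. By the equality case of the triangle inequality applied to $a + (-b) + (-c)$, the three quantities $a$, $-b$, $-c$ must share a sign. Read in increasing order of original index, this constrains the triple $(c,b,a)$ to have one of the two sign patterns $(-,-,+)$ or $(+,+,-)$.

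Next I would deal with the potential zero cases. Since $R$ is assumed to be an extraTrib, it ends in positive integers, so there is some $N$ with $R_k > 0$ for $k \ge N$, which forces $S_m \ne 0$ for all $m \le -N$. Consequently, in this left tail of $S$ every consecutive triple $(S_j, S_{j+1}, S_{j+2})$ has strictly nonzero entries, and so must strictly match one of the two allowed patterns.

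The final step is where the contradiction arises. I would compare two overlapping consecutive triples at indices $j$ and $j+1$, with $j$ chosen well inside the left tail. The pair $(S_{j+1}, S_{j+2})$ appears as the last two entries of the first triple and the first two entries of the second. The former must therefore be $(-,+)$ or $(+,-)$ (the suffixes of $(-,-,+)$ and $(+,+,-)$), while the latter must be $(-,-)$ or $(+,+)$ (the prefixes of the same two patterns), and these two requirements are incompatible. I expect the only real obstacle to be expository: organizing the sign-pattern analysis cleanly and verifying that Lemma~\ref{lemma:negativeextraTribs} and the "ends in positive integers" hypothesis on $R$ together rule out zeros in the relevant tail, so that the strict sign argument applies. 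No delicate computation is involved once the absolute-value identity is set up.
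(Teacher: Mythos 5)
Your proof is correct, but it takes a genuinely different route from the paper's. The paper first invokes Lemma~\ref{lemma:negativeextraTribs} (far enough to the left, no three consecutive terms of an extraTrib share a sign), assumes the absolute values increase leftward, and then runs a five-case analysis on the sign patterns of four consecutive terms $A,B,C,D$, using the ordering $a>b>c>d$ to eliminate four cases and a fifth term to dispose of the last one. You instead combine the two recurrences into the single identity $|a-b-c|=|a|+|b|+|c|$ and use the equality case of the triangle inequality applied to $a+(-b)+(-c)$ to force every triple $(S_j,S_{j+1},S_{j+2})$ of nonzero terms in the left tail into the pattern $(-,-,+)$ or $(+,+,-)$; two overlapping triples are then immediately incompatible, since the last two entries of one pattern must have opposite signs while the first two entries must agree. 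Your argument is shorter and more self-contained: it needs neither Lemma~\ref{lemma:negativeextraTribs} nor the leftward monotonicity of the absolute values, only that the terms of $S$ are eventually nonzero to the left, which you correctly extract from the hypothesis that the reversal ends in positive integers. The paper's version, by contrast, reuses the sign lemma it had already established for other purposes. Both proofs are sound.
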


\begin{proof}
We use the fact from Lemma~\ref{lemma:negativeextraTribs} that no three consecutive terms of an extraTrib can be all positive or all negative once the term index is below some constant $i_0$.

Moreover, for the reversal of the extraTrib series to be an extraTrib, we can assume that the terms to the left of some index constant $i_0$ increase in absolute value. Let the absolute values of some four consecutive terms $A$, $B$, $C$, $D$ to the left of $i_0$ be $a$, $b$, $c$, and $d$. By our assumptions, $$a > b > c > d.$$

Now consider the signs of $A$, $B$, $C$, and $D$. Without loss of generality, we can assume that the sign for $A$ is positive. As no three consecutive terms have the same sign, there are 5 cases for the distribution of signs: $+ + -+$, $+ + --$, $+ - ++$, $+ - +-$, and $+ - -+$. We have that $A+B+C=D$. Thus, we get the following equations: $a + b - c = d$, 
$a +  b - c = - d$, 
$a - b + c = d$, 
$a - b + c = - d$, and $a  - b -  c = d$. Or equivalently, 
$a + b = c + d$, $a + b + d = c$, 
$a + c = b + d$, $ a + c + d = b$, and $a = b + c + d$.

Given that $a > b > c > d$, we can exclude the first four cases. We are left with the case of four consecutive numbers $a$, $-b$, $-c$, and $d$. Consider the number to the left of $a$. On one hand, it has to equal $-c - (-b) - a = b - c - a$. On the other hand, its absolute value has to be $a+b+c$. We get a contradiction. 
\end{proof}

\section{Fib/Trib binary/ternary numbers}\label{sec:tribinarrynumbers}

The sequence of \textit{Fibbinary} numbers (A003714) is defined as numbers whose binary representation contains no two adjacent ones. In other words, the Fibbinary numbers can be formed by writing the Zeckendorf representations of natural numbers and then evaluating the result in binary:
\[0,\ 1,\ 2,\ 4,\ 5,\ 8,\ 9,\ 10,\ 16,\ 17,\ 18,\ 20,\ 21,\ 32,\ 33,\ 34,\ 36,\ 37,\ 40,\ \ldots. \]

Analogously, we define the \textit{Tribbinary} numbers as those numbers whose binary representation has no three consecutive ones. The sequence of Tribbinary numbers can be constructed by writing out the Tribonacci representations of non-negative integers and then evaluating the result in binary. This is sequence A003726: 
\[0,\ 1,\ 2,\ 3,\ 4,\ 5,\ 6,\ 8,\ 9,\ 10,\ 11,\ 12,\ 13,\ 16,\ 17,\ \ldots\]

Now we would like to introduce two more sequences related to base 3, rather than base~2. 

We define \textit{Fibternary} numbers as numbers whose ternary representations consist only of zeros and ones and do hot have two consecutive ones. The sequence of Fibternary numbers can be constructed by writing out the Zeckendorf representations of non-negative integers and then evaluating the result in ternary. This is sequence A060140:
\[0,\ 1,\ 3,\ 9,\ 10,\ 27,\ 28,\ 30,\ 81,\ 82,\ 84,\ 90,\ 91,\ 243,\ 244,\ \ldots.\]
These are Fibbinary numbers written in base 2, then evaluated in base 3.

We define \textit{Tribternary} numbers as numbers whose ternary representations consist only of zeros and ones and do hot have three consecutive ones. The sequence of Tribternary numbers can be constructed by writing out the Tribonacci representations of non-negative integers and then evaluating the result in ternary. This is now sequence A356823:
\[0,\ 1,\ 3,\ 4,\ 9,\ 10,\ 12,\ 27,\ 28,\ 30,\ 31,\ 36,\ 37,\ 81,\ 82,\ 84,\ 85,\ 90,\ 91,\ \ldots.\]
These are Tribbinary numbers written in base 2, then evaluated in base 3.

A lot is known about Fibbinary numbers and can be easily generalized to the other three sequences.

\textbf{Powers of 2 and 3.} The number of Fibbinary numbers less than any power of two is a Fibonacci number. It is easy to prove that the number of Tribbinary numbers less than any power of two is a Tribonacci number. Similarly, the number of Fibternary(Tribternary) numbers less than any power of three is a Fibonacci(Tribonacci) number.

\textbf{Recursive generation.}
We can generate all the four sequences we discuss here recursively. Start by adding 0 to the sequence. Then, if $x$ is a number in the sequence, add the following numbers to the sequence (ignoring repeated zeros):
\begin{itemize}
\item $2x$ and $4x+1$, for Fibbinary;
\item $2x$, $4x+1$, and $8x+3$, for Tribbinary;
\item $3x$ and $9x+1$, for Fibternary;
\item $3x$, $9x+1$, and $27x+4$, for Tribternary;
\end{itemize}

\textbf{Fibonacci(Tribonacci) word.} The Fibbinary numbers have the property that the $n$th Fibbinary number is even if the $n$th term of the Fibonacci word is $a$. Respectively, the $n$th Fibbinary number is odd (of the form $4x+1$) if the $n$th term of the Fibonacci word is $b$. 

Similarly, the $n$th Fibternary number is of the form $3x$ (correspondingly $9x+1$) if $n$th term of the Fibonacci word is $a$ (correspondingly $b$) (see comment in the OEIS for A060140).

Similarly, the $n$th Tribbinary number is even if the $n$th term of the Tribonacci word is $a$. Respectively, the $n$th Tribbinary number is of the form $4x+1$ if the $n$th term of the Tribonacci word is $b$, and the $n$th Tribbinary number is of the form $8x+3$ if the $n$th term of the Tribonacci word is $c$. This follows from Theorem~\ref{thm:abcCorrespondsTo0-01-11}, see \cite{DucheneRigo2008}.

Similarly, the $n$th Tribternary number is divisible by 3 if the $n$th term of the Tribonacci word is $a$. Respectively, the $n$th Tribbinary number is of the form $9x+1$ if $n$th term of the Tribonacci word is $b$, and the $n$th Tribbinary number is of the form $27x+4$ if $n$th term of the Tribonacci word is $c$.

\textbf{Sums.} It is known and can be easily checked, that every non-negative integer can be written as the sum of two Fibbinary numbers. As Fibbinary numbers are a subset of Tribbinary numbers, we get that every non-negative integer can be written as the sum of two Tribbinary numbers. Here is the analog for Fibternary and Tribternary numbers.

\begin{proposition}
Every non-negative integer can be written as a sum of four Fibternary numbers or as a sum of three Tribternary numbers.
\end{proposition}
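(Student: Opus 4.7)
The plan is to handle the two claims separately using the base-3 representation of $n$, distributing the ternary digits among the allowed number of summands so that each summand consists only of zeros and ones and avoids the forbidden pattern (two consecutive ones for Fibternary, three for Tribternary).

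For the Fibternary claim, I would proceed in two stages. First, write $n = \sum_i d_i 3^i$ with $d_i \in \{0,1,2\}$ and split each digit as $d_i = e_i + f_i$ with $e_i, f_i \in \{0,1\}$ (so $0 \to 0+0$, $1 \to 1+0$, $2 \to 1+1$); this expresses $n$ as a sum of two $\{0,1\}$-ternary numbers $E$ and $F$. These need not be Fibternary, since they may contain runs of consecutive $1$'s. Second, I claim any $\{0,1\}$-ternary number $M$ can be written as a sum of two Fibternary numbers. Indeed, let $P \subseteq \mathbb{Z}_{\ge 0}$ be the set of digit positions where $M$ has a $1$. Build a graph on $P$ with an edge between $p$ and $p+1$ whenever both lie in $P$; this graph is a disjoint union of paths, hence $2$-colorable. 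The two color classes yield subsets $P_1, P_2 \subseteq P$ each containing no two consecutive integers, so the numbers with $1$'s exactly at $P_1$ and $P_2$ are Fibternary and sum to $M$. Applying this to both $E$ and $F$ expresses $n$ as a sum of four Fibternary numbers.

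For the Tribternary claim, I would use a cyclic assignment of digit positions to three ``slots'' indexed by residue mod $3$. Given $n = \sum_i d_i 3^i$, define three numbers $S_1, S_2, S_3$ by the rule that the $i$-th digit of $S_k$ is $1$ exactly in the following cases: (i) $i \equiv k-1 \pmod 3$ and $d_i \in \{1,2\}$, or (ii) $i \equiv k-2 \pmod 3$ and $d_i = 2$. A direct check shows that in each position $i$, the digits of $S_1, S_2, S_3$ sum to $d_i$, and each $S_k$ has $\{0,1\}$-ternary digits. Crucially, in slot $k$, the positions that could carry a $1$ lie in the residue classes $k-1$ and $k-2 \pmod 3$, missing the class $k \pmod 3$ entirely. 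Any three consecutive positions $i, i+1, i+2$ contain exactly one representative of each residue class, so at least one of them is $0$ in slot $k$. Hence each $S_k$ is Tribternary.

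The main potential obstacle is verifying the no-three-consecutive-ones property in the Tribternary construction; but once the cyclic residue-class structure above is set up, this is an immediate case-check. The Fibternary case has no obstacle beyond observing that the positions of $1$'s in a $\{0,1\}$-ternary number form a $2$-colorable graph, which is immediate since the connected components are paths. A brief example (say, $n = 5 = 12_3 = 4 + 1$ split as Fibternary $3 + 1 + 1 + 0$, and $n = 8 = 22_3$ split as Tribternary $1 + 4 + 3$) can be included to illustrate the procedures.
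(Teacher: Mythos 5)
Your proof is correct and rests on the same core idea as the paper's: distribute the ternary digits of $n$ among the summands so that each summand uses only digits $0$ and $1$ and its support avoids the forbidden adjacency pattern. For the Tribternary half, your $S_1,S_2,S_3$ (each with zeros on an entire residue class mod $3$) are essentially the paper's three special numbers after the digit adjustments have been carried out, so the two arguments coincide. The only genuine difference is in the Fibternary half: the paper fixes four summands in advance as two copies each of the alternating patterns $1010\ldots$ and $0101\ldots$ (whose sum is $22\ldots2$) and then deletes ones to descend to $n$, so the no-two-consecutive-ones property is automatic; you instead split $n$ into two $\{0,1\}$-ternary numbers and then $2$-color the runs of ones in each, which requires the extra (easy) observation that a disjoint union of paths is $2$-colorable. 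Both routes are valid and elementary; the paper's is slightly more economical, while yours makes the carry-free digitwise bookkeeping a bit more explicit.
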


\begin{proof}
We start with Fibternary numbers. Suppose base-3 representation of integer $n$ has $k$ digits.   Consider two $k$-digit Fibternary numbers of the form $101010\ldots$ in base 3 and another two $k$-digit numbers of the form $010101\ldots$. We can add these four numbers to get the $k$-digit number $N$ written as $222222\ldots$ in base 3. We can get to our number $n$ by subtracting one or two in some digit placements. We can distribute these subtractions between our four numbers by replacing some ones with zeros in them. The four numbers will remain Fibternary and will sum up to $n$.

We continue with Tribternary numbers. Suppose base-3 representation of integer $n$ has $k$ digits. Consider three special numbers in base 3, all of them consisting of zeros and ones. The first number has zeros in digit places divisible by 3, the second number in digit places that have remainder 1 when divided by three, and the third number in digit places that have remainder 2 when divided by 3. These three numbers sum up to a number with $k$ digits in base three, all equal to 2. Suppose our number $n$ in base 3 has the digit 1 in some place. Then we can remove a 1 from one of the two special numbers that have a 1 in the same place. Suppose our number $(n)_3$ has the digit 0 in some place. Then we can remove a 1 from both of the two special numbers with a 1 in the same place. When all digits are adjusted, we will have three numbers that sum to $n$, and all of them have every third digit as zero. Thus, all of them are Tribonacci representations of some numbers.
\end{proof}

\textbf{Multiples.} Every number has a Fibbinary multiple. The proof is available in the sequence A300867 entry in the OEIS \cite{OEIS}. Our generalization is in the next proposition.

\begin{proposition}
Every number has a Fibternary multiple.
\end{proposition}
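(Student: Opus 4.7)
The plan is to imitate the standard pigeonhole argument that every positive integer has a Fibbinary multiple, carried over to base $3$. The key observation is that if $S \subseteq \mathbb{Z}_{\ge 0}$ is a set of non-negative integers no two of which are consecutive, then $\sum_{i \in S} 3^i$ is a Fibternary number, since its base-$3$ expansion uses only $0$s and $1$s, with the $1$s in non-adjacent positions.

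With this in mind, I would fix $n$ and define the sequence
\[
a_k \;=\; \sum_{i=1}^{k} 3^{F_{2i}} \qquad (k = 0, 1, 2, \dots),
\]
where $F_m$ denotes the $m$-th Fibonacci number. By the pigeonhole principle, among the $n+1$ values $a_0, a_1, \dots, a_n$ there exist indices $i < j$ with $a_i \equiv a_j \pmod{n}$, so that $n \mid a_j - a_i$. The difference is positive, and explicitly equals
\[
a_j - a_i \;=\; \sum_{\ell = i+1}^{j} 3^{F_{2\ell}}.
\]

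It remains to check that this difference is Fibternary. The exponents $F_{2i+2}, F_{2i+4}, \dots, F_{2j}$ are distinct positive integers, so the ternary expansion has digits in $\{0,1\}$. Moreover, consecutive exponents differ by $F_{2\ell+2} - F_{2\ell} = F_{2\ell+1} \ge F_3 = 2$, so no two $1$s in the ternary expansion are adjacent. Hence $a_j - a_i$ is a positive Fibternary multiple of $n$, completing the proof.

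I do not anticipate a serious obstacle: the only point that needs any care is verifying the gap condition $F_{2\ell+1} \ge 2$ so that adjacent $1$-digits are avoided, and this holds for all $\ell \ge 1$. The same scheme, with even-indexed Fibonacci exponents replaced by an appropriate spaced sequence, would also extend to the Tribternary setting if desired.
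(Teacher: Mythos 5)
Your proof is correct and follows essentially the same route as the paper: a pigeonhole argument on partial sums of powers of $3$ whose exponents are spaced at least $2$ apart, so that every positive difference of two such sums is automatically Fibternary. The paper simply uses the even exponents $0,2,4,\dots$ (i.e., the base-$9$ repunits $\frac{9^k-1}{8}$) where you use the Fibonacci-indexed exponents $F_{2i}$; the extra Fibonacci structure is not needed, but it does no harm.
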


\begin{proof}
Let $a(k)=\frac{9^k-1}{8}$, and then for any $n$, the pigeonhole principle implies there are $i\neq j$ such that $a(i)\equiv a(j)\mod n$, making $a(i)-a(j)$ a multiple of $n$. 

In addition,
$$9^k-1={\overbrace{888\ldots8}^{k\text{ 8's}}}_9,$$ so 
$$a(k)={\overbrace{111\ldots1}^{k\text{ 1's}}}_9.$$
Then $$a(i)-a(j)=\overbrace{111\ldots1}^{i-j\text{ 1's}}{\overbrace{000\ldots0}^{j\text{ 0's}}}_9=\overbrace{010101\ldots01}^{i-j\text{ 01's}}{\overbrace{000\ldots0}^{2j\text{ 0's}}}_3.$$

This means that $a(i)-a(j)$ is actually fibternary.
\end{proof}

As every Fibbinary number is also Tribbinary and every Fibternary number is also Tribternary, we have the following corollary.

\begin{corollary}
Every number has a Tribbinary and a Tribternary multiple.
\end{corollary}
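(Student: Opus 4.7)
The plan is to observe that the corollary follows immediately from the two containments between these sequence families together with the two multiple-existence results already established in this section. Specifically, I would first note that the Tribbinary numbers are a superset of the Fibbinary numbers (any binary string avoiding two adjacent ones automatically avoids three consecutive ones), and similarly the Tribternary numbers are a superset of the Fibternary numbers (any ternary string of 0s and 1s avoiding two consecutive 1s automatically avoids three consecutive 1s).

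Then I would invoke the two cited/just-proved facts: every positive integer $n$ has a Fibbinary multiple (the OEIS reference to A300867 in the preceding paragraph), and every positive integer $n$ has a Fibternary multiple (the proposition just proved via a pigeonhole argument on the numbers $a(k) = (9^k - 1)/8$). Given a Fibbinary multiple $m$ of $n$, the number $m$ is also Tribbinary, so it serves as a Tribbinary multiple of $n$; given a Fibternary multiple $m'$ of $n$, the number $m'$ is also Tribternary, so it serves as a Tribternary multiple of $n$.

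The case $n = 0$ is handled trivially since $0$ itself lies in both sequences. There is no real obstacle here — the whole point of stating this as a corollary is that once one has established the stronger existence results for the "Fib" versions and the elementary inclusion of digit conditions, no further work is required. The proof will be essentially two sentences of implication.
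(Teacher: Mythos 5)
Your proposal is correct and matches the paper's own argument exactly: the corollary is derived from the inclusions Fibbinary $\subseteq$ Tribbinary and Fibternary $\subseteq$ Tribternary together with the previously established existence of Fibbinary and Fibternary multiples. Nothing further is needed.
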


\section{Acknowledgments}

We are grateful to PRIMES STEP program for giving us the opportunity to conduct this research.

\end{document}